
\documentclass[11pt]{amsart}
\usepackage{amsmath,amsthm,amssymb,latexsym,graphicx,verbatim,enumerate,%
ifpdf,mdwlist}
\usepackage{mathabx}
\ifpdf
\usepackage{hyperref}
\else
\usepackage[hypertex]{hyperref}
\fi

\usepackage{amssymb, amsmath, amsthm}
\usepackage{color}
\usepackage{graphicx}
\usepackage{cancel}
\usepackage{float}

\DeclareMathOperator{\HA}{HA}

\DeclareMathOperator{\IT}{iT}

\newtheorem{q}{Question}

\newtheorem{thm}{Theorem}
\newtheorem{fact}[thm]{Fact}
\newtheorem{definition}[thm]{Definition}
\newenvironment{defn}{\begin{definition} \rm}{ \end{definition}}
\newtheorem{remark}[thm]{Remark}
\newenvironment{rem}{\begin{remark} \rm}{ \end{remark}}
\newtheorem{lemma}[thm]{Lemma}
\newtheorem{obs}[thm]{Observation}
\newtheorem{example}[thm]{Example}
\newenvironment{eg}{\begin{example} \rm}{ \end{example}}

\newtheorem{cory}[thm]{Corollary}

\newtheorem{claim}[thm]{Claim}

\renewcommand{\phi}{\varphi}

\newcommand{\rel}[1]{\mathrel{#1}}
\newcommand{\ol}[1]{\overline{#1}}

\hoffset=0in%
\voffset=0in%
\oddsidemargin=0in%
\evensidemargin=0in%
\topmargin=0in%
\textwidth=6.5in%
\textheight=8.5in

\title[Effective inseparability]
{Effective inseparability, lattices, and pre-ordering relations}%

\author[Andrews]{Uri Andrews}
\email{\href{mailto:andrews@math.wisc.edu }{andrews@math.wisc.edu}}
\urladdr{\url{http://www.math.wisc.edu/~andrews/}}

\address{Department of Mathematics\\
University of Wisconsin\\
Madison, WI 53706-1388\\
USA}

\author[Sorbi]{Andrea Sorbi}
\email{\href{mailto:andrea.sorbi@unisi.it}{andrea.sorbi@unisi.it}}
\urladdr{\url{http://www3.diism.unisi.it/~sorbi/}}
\address{Dipartimento di Ingegneria dell'Informazione e Scienze Matematiche\\
Universit\`a di Siena\\
Siena, 53100\\
Italy}

\thanks{Andrews was partially supported by NSF grant DMS1600228.}

\thanks{Both authors were partially supported by
grant AP05131579 of the Science Committee of the Republic
of Kazakhstan. Sorbi is a member of INDAM-GNSAGA}

\keywords{Computably enumerable pre-orders; computable
reducibility on pre-orders}

\subjclass[2010]{03D25}

\begin{document}
\begin{abstract}
We study effectively inseparable (e.i.) pre-lattices (i.e. structures of
the form $L=\langle \omega, \wedge, \lor, 0, 1, \leq_L\rangle$ where
$\omega$ denotes the set of natural numbers and the following hold:
$\wedge, \lor$ are binary computable operations; $\leq_L$ is a c.e.
pre-ordering relation, with $0 \leq_{L} x \leq_{L} 1$ for every $x$; the
equivalence relation $\equiv_L$ originated by $\leq_L$ is a congruence on
$L$ such that the corresponding quotient structure is a non-trivial bounded
lattice; the $\equiv_L$-equivalence classes of $0$ and $1$ form an
effectively inseparable pair), and show
(Theorem~\ref{thm:ei-lattices-universal}, solving a problem
in~\cite{Montagna-Sorbi:Universal}), that if $L$ is an e.i. pre-lattice
then $\le_{L}$ is universal with respect to all c.e. pre-ordering
relations, i.e. for every c.e. pre-ordering relation $R$ there exists a
computable function $f$ such that, for all $x,y$, $x \rel{R} y$ if and only
if $f(x) \le_{L} f(y)$; in fact (Corollary~\ref{cor:interval}) $\leq_L$ is
locally universal, i.e. for every pair $a<_{L} b$ and every c.e.
pre-ordering relation $R$ one can find a reducing function $f$ from $R$ to
$\le_{L}$ such that the range of $f$ is contained in the interval $\{x: a
\leq_{L} x \leq_{L} b\}$. Also (Theorem~\ref{thm:unif-density}) $\leq_L$ is
uniformly dense, i.e. there exists a computable function $f$ such that for
every $a,b$ if $a<_{L} b$ then $a<_{L} f(a,b) <_{L} b$, and if $a\equiv_{L}
a'$ and $b \equiv_{L} b'$ then $f(a,b)\equiv_{L} f(a',b')$. Some
consequences and applications of these results are discussed: in particular
(Corollary~\ref{cor:sigman}) for $n \ge 1$ the c.e. pre-ordering relation
on $\Sigma_{n}$ sentences yielded by the relation of provable implication
of any c.e. consistent extension of Robinson's $Q$ or $R$ is locally
universal and uniformly dense; and (Corollary~\ref{cor:HA}) the c.e.
pre-ordering relation of provable implication of Heyting Arithmetic is
locally universal and uniformly dense.
\end{abstract}

\maketitle

\section{Introduction}
Given binary relations $R,S$ on the set $\omega$ of natural numbers, we say
that $R$ is \emph{reducible} to $S$ (notation: $R \preceq S$) if there is a
computable function $f$ (called a \emph{computable embedding} of $R$ into
$S$) such that
\[
(\forall x,y)[x  \rel{R} y \Leftrightarrow f(x) \rel{S} f(y)].
\]
Moreover, given a relation $R$ and a class $\mathcal{A}$ of relations on
$\omega$, we say that $R$ is \emph{$\mathcal{A}$-universal} if $R \in
\mathcal{A}$ and $S \preceq R$ for every $S\in \mathcal{A}$.

An interesting case, to which a rapidly growing amount of literature is being
devoted, is when one takes $\mathcal{A}$ to be a class of equivalence
relations in some fixed complexity measure: the notion of universality
becomes in this case a useful tool to measure the computational complexity of
classification problems in computable mathematics (problems which are in fact
equivalence relations): for instance it is shown in
\cite{Fokina-et-al-several} that the isomorphism relation for various
familiar classes of computable groups is $\Sigma^1_1$-complete. An even more
restricted case is when one considers only computably enumerable (henceforth
abbreviated as c.e.) equivalence relations (c.e. equivalence relations  will
be abbreviated as \emph{ceers} throughout the paper), see e.g. \cite{ceers}.
Interest in ceers is motivated by their importance in logic and algebra: for
instance, modulo suitable G\"odel numberings identifying the various objects
with numbers, the relation of provable equivalence between formulas of a
formal system, and word problems for finitely presented groups and
semigroups, are ceers. When studying ceers, particular attention has been
given to $\Sigma^0_1$-universal ceers: see for instance
\cite{andrews2017survey} for a survey.

It is very natural to move from equivalence relations to the wider class
consisting of the pre-ordering relations. Indeed, many interesting
classification problems in computable mathematics occur as pre-ordering
relations (for instance embedding problems, instead of isomorphism problems,
for various classes of computable groups). Restriction to c.e. pre-ordering
relations (i.e. to pre-orderings $R$ such that the set $\{\langle x,y
\rangle: x \rel{R} y\}$ is c.e.) is again mostly motivated by their
importance in logic: for instance if one considers a strong enough formal
system, then the relation of provable implication between formulas of a
formal system is a $\Sigma^0_1$-universal pre-ordering relation by
\cite{Montagna-Sorbi:Universal}, i.e. it is c.e. and every c.e. pre-ordering
relation is reducible to it.

In this paper we investigate some classes of $\Sigma^0_1$-universal
(henceforth called simply \emph{universal}) pre-ordering relations.
Our quest for interesting universal c.e. pre-ordering relations is based on
the notion of effective inseparability for lattices (or, better,
pre-lattices, as we will call them in this paper). In this regard, this paper
can be viewed as a generalization of some aspects of the work done by
Montagna and Sorbi~\cite{Montagna-Sorbi:Universal},
Nies~\cite{Nies:Effectively} and Shavrukov~\cite{Shavrukov}, concerning
effectively inseparable Boolean algebras.

To make more sense of this, we first need some definitions. In general, given
a type $\tau=\langle \{f_{i}\}_{i \in I}, \{R_{j}\}_{j \in J}\rangle$ for
operations and relations, then a \emph{pre-structure of type $\tau$} is a
structure $A= \langle \omega, \{f^A_{i}\}_{i \in I}, \{R^A_{j}\}_{j \in I},
\leq_A \rangle$ with operations and relations interpreting the type, such
that $\leq_A$ is a pre-ordering and the equivalence relation $\equiv_A$
induced by $\leq_A$ (i.e.\ $x \equiv_{A} y$ if $x \leq_{A} y$ and $y \leq_{A}
y$) is a congruence for the operations and the relations. If in addition
operations and relations are finitary, $\leq_A$ is c.e., each operation
$f^A_i$ (interpreting $f_i$) is computable uniformly in $i$, and each
relation $R^A_j$ (interpreting $R_j$) is c.e. uniformly in $i$, then we say
that the pre-structure is a \emph{c.e. pre-structure of type $\tau$}.

\begin{defn}\label{defn:associated-quotient}
If $A$ is a pre-structure then the quotient $A_{/\equiv_A}$ will be called
the \emph{quotient structure associated with $A$}. (Notice that
$A_{/\equiv_A}$ is partially ordered  by the partial ordering originated by
$\leq_A$.)
\end{defn}

For instance, a pre-ordering relation can be just viewed as a pre-structure
$R=\langle \omega, \leq_R \rangle$ (called a \emph{pre-order}) with empty
type, whose associated quotient structure is a partial order. It will be
customary in the following to identify pre-ordering relations with their
corresponding pre-orders, so that the terms ``pre-ordering relation'' and
``pre-order'' will be treated as synonymous.

\begin{defn}
A \emph{c.e. pre-lattice} is a c.e. pre-structure $L=\langle \omega, \wedge,
\vee, \leq_{L}\rangle$ such that the associated quotient structure is a
lattice (with $\wedge$ and $\lor$ inducing in the quotient the operations of
meet and join, respectively).
\end{defn}

Having in mind this definition, it is now clear what \emph{c.e. Boolean
pre-algebras}, \emph{c.e. distributive pre-lattices}, \emph{c.e. Heyting
pre-algebras}, \emph{c.e. pre-semilattices}, etc., are. In particular a
\emph{c.e. bounded pre-lattice} is a c.e. pre-structure $L=\langle \omega,
\wedge, \lor, 0,1, \leq_L \rangle$ such that the associated quotient
structure is a bounded lattice for which the $\equiv_{L}$-equivalence classes
$[0]_L$ and $[1]_L$, of $0$ and $1$, respectively, are the least element and
the greatest element. (It is clear that up to a computable permutation of
$\omega$ we may assume that in a c.e. bounded pre-lattice $L$ one has that
$0\leq_{L} x \leq_{L} 1$ for every $x$, so that our choice of $0$ and $1$ as
representatives of the least element and the greatest element, respectively,
is no loss of generality.)

\begin{defn}\label{defn:isomorphism}
By a \emph{computable isomorphism} between pre-structures $A,B$ we mean a
computable function $f$ which reduces $\leq_A \preceq \leq_B$, and the
assignment on equivalence classes $[x]_{A} \mapsto [f(x)]_{B}$ is a well
defined isomorphism between the corresponding quotient structures of $A$ and
$B$.
\end{defn}

\begin{rem}
Our terminology follows largely \cite{Montagna-Sorbi:Universal} except for
the fact that what we call here ``c.e. pre-structures'' are therein called
\emph{positive pre-structures}. In the Russian literature, a \emph{positively
numbered structure} is a pair $\langle A^-, \nu\rangle$, where $A^-$ has
universe $\omega$ with computable operations and c.e. relations such that
$\nu: \omega \rightarrow A$ is a surjective function and the relation
``$\nu(x)=\nu(y)$'' is a c.e. equivalence relation inducing a congruence
$\theta$ on $A^-$. So, what we call in this paper ``c.e. pre-lattices'',
``c.e. Boolean pre-algebras'' etc., in the Russian literature would be
certain ``positively numbered lattices'', ``positively numbered Boolean
algebras'', etc.).  Our ``e.i. Boolean pre-algebras'' are e.i. Boolean
algebras in the sense of \cite{Nerode-Remmel:Survey, Shavrukov}. See also
\cite{Selivanov03,Gavryushkin-Jain-Khoussainov-Stephan,
Franketal:structures,FoKhSeTu} for an approach to c.e structures similar to
the one taken in this paper.
\end{rem}

\begin{rem}\label{rem:more-general}
One could give a seemingly more general definition of a c.e. pre-structure of
type $\tau$, by asking that the universe of the structure be just a c.e. set
instead of $\omega$. Up to computable isomorphisms, nothing is however gained
in generality by doing so. Let us briefly give a justification of this claim
in the case of c.e. pre-lattices. Let  $L=\langle U \wedge, \lor, \leq
\rangle$ be a c.e. pre-lattice, where $U$ is a c.e. set. Let $h: \omega
\longrightarrow U $ be a computable surjection. This induces a c.e.
pre-lattice $L'=\langle \omega, \wedge', \lor', \leq_{L'}\rangle$ where
$x\le_{L'} y$ if and only if $h(x) \le_{L} h(y)$, and (letting $h^{-1}$ be
any partial computable function such that $h(h^{-1}(x))=x$ for every $x \in
U$) $x \wedge' y=h^{-1}(h(x) \wedge h(y))$, $x \lor' y=h^{-1}(h(x) \lor
h(y))$. The assignment on equivalence classes $[x]_{A} \mapsto [h(x)]_{B}$ is
clearly a well-defined isomorphism between the corresponding quotient
structures of $A$ and $B$. Notice that if $U$ is infinite, then we can take
$h$ to be a computable permutation of $\omega$. On the other hand, if the
equivalence classes of $\equiv_L$ are infinite then from any computable
isomorphism we can effectively go to a computable isomorphism induced by a
computable permutation.
\end{rem}

The following notion from computability theory will play a fundamental role
in this paper. We recall that a disjoint pair $(A,B)$ of sets of numbers is
called an \emph{effectively inseparable} (or simply, \emph{e.i.}) pair if
there exists a partial computable function $\phi(u,v)$ (called a
\emph{productive function} for the pair)  such that
\[
(\forall u,v)[A \subseteq W_u \,\&\, B \subseteq W_v
 \,\&\, W_u \cap W_v=\emptyset \Rightarrow \phi(u,v)\downarrow
 \notin W_u\cup W_v].
 \]
It is known, see e.g. \cite[p.44]{Soare:Book}, that every e.i.  pair of c.e.
sets has a total productive function: in fact, one can uniformly go from any
index of productive function to an index of a total productive function. Also
if $(A_0,A_1)$, $(X_0,X_1)$ are disjoint pairs of c.e. sets, with $(A_0,A_1)$
e.i. and either $(A_0,B_1) \subseteq (B_0,B_1)$ (meaning $A_0\subseteq B_0$
and $A_1 \subseteq B_1$) or $(A_0,B_1) \leq_m (B_0,B_1)$ (meaning that there
is a computable function simultaneously $m$-reducing $A_0\leq_m B_0$ and
$A_1\leq_m B_1$) then $(B_0,B_1)$ is e.i. as well.

\begin{defn}\label{def:e.i.-pre-lattice}
An \emph{effectively inseparable} (or \emph{e.i.}) \emph{pre-lattice} is a
c.e. bounded pre-lattice $L= \langle \omega, \wedge, \lor, 0,1 \le_{L}
\rangle$ such that the pair of equivalence classes $([0]_L, [1]_L)$ is
e.i.\,. (The definition of course specializes to c.e. Boolean pre-algebras
(defining \emph{e.i. Boolean pre-algebras}) and c.e. Heyting pre-algebras
(defining \emph{e.i. Heyting pre-algebras}), which are examples of c.e.
bounded pre-lattices.)
\end{defn}

It is shown in \cite{Montagna-Sorbi:Universal} (based on
\cite{Pour-El-Kripke}; see also \cite{Ianovski-et-al}) that if $B$ is an e.i.
Boolean pre-algebra, and $\le_{B}$ is the c.e. pre-ordering relation relative
to $B$, then $\le_{B}$ is universal. Ianovski~et~al.~\cite{Ianovski-et-al}
have extended this result throughout the arithmetical hierarchy by showing
that for every $n\ge 1$, the pre-ordering relation of a
$\Sigma^{0}_{n}$-effectively inseparable Boolean pre-algebra (called
\emph{$\Sigma^{0}_{n}$-effectively inseparable Boolean algebra}) is
a $\Sigma^{0}_{n}$-universal pre-ordering relation.

Moving in this paper from e.i. Boolean pre-algebras to e.i. pre-lattices, we
show that the pre-ordering relation of any e.i. pre-lattice is universal. In
fact, we show that every c.e. pre-order can be computably embedded in any
nonempty interval of any e.i. pre-lattice (this property will be called
\emph{local universality}: see Corollary~\ref{cor:interval}). Moreover every
e.i. pre-lattice $L$ is uniformly dense (Theorem~\ref{thm:unif-density}),
meaning that there is a computable function $f(a,b)$ such that for every
$a,b$ if $a<_{L} b$ then $a<_{L} f(a,b) <_{L} b$, and if $a\equiv_{L} a'$ and
$b \equiv_{L} b'$ then $f(a,b)\equiv_{L} f(a',b')$. Remarkably, these results
on universality, local universality and uniform density, do not use
distributivity (on the other hand Example~\ref{ex:nondistr} shows that  e.i.
non-distributive pre-lattices do exist), but do not hold in general if we
move from e.i. pre-lattices to e.i. pre-semilattices, i.e. c.e. bounded
pre-semilattices $S=\langle \omega, \lor, 0,1, \le_S\rangle$ such that the
pair $([0]_S, [1]_S)$ is e.i. (in Observation~\ref{obs:not-for-semilattices}
we exhibit an e.i. upper pre-semilattice whose pre-ordering relation is not
universal). We also point out (Theorem~\ref{thm:several} and
Theorem~\ref{thm:ei-prelattices-not-isomorphic}) several distinct natural
computable isomorphism types of e.i. pre-lattices.

The greater generality of considering pre-lattices instead of Boolean
pre-algebras enables us to extend to consistent extensions of very weak
systems of arithmetic such as Robinson's systems $Q$ or $R$ some of the
results of Shavrukov and Visser~\cite{Shavrukov-Visser} stated for consistent
extensions of Elementary Arithmetic $EA$ (see Corollary~\ref{cor:sigman}): in
particular (Corollary~\ref{cor:sigman}) for $n \ge 1$ the c.e. pre-ordering
relation on $\Sigma_{n}$ sentences yielded by the relation of provable
implication of any c.e. consistent extension of Robinson's $Q$ or $R$ is
locally universal and uniformly dense.

Since our results apply to every e.i. lattice, they hold of e.i. Heyting
pre-algebras as well (for instance, the Heyting pre-algebra of the Lindenbaum
sentence algebra of any consistent intuitionistic extension of Heyting
Arithmetic, as defined in Definition~\ref{defn:preHalgebras}) to which one
can therefore extend the above mentioned results showing for instance
(Corollary~\ref{cor:HA}) that the c.e. pre-ordering relation of provable
implication of Heyting Arithmetic is locally universal and uniformly dense.

In contrast with what happens for e.i. Boolean pre-algebras which are
universal with respect to the class of all c.e. Boolean pre-algebras (a c.e.
Boolean pre-algebra $B$ is \emph{universal} with respect to  the class of all
c.e. Boolean pre-algebras if for every c.e. Boolean pre-algebra $B'$ one has
that $\leq_{B'} \preceq \leq_{B}$ via a reduction $f$ which induces a
monomorphism between the associated quotient structures) we show that,
although guaranteeing universality of its pre-ordering relation, being an
e.i. pre-lattice does not suffice  to guarantee universality with respect to
the class of all c.e. pre-lattices: Theorem~\ref{thm:not-all-of-them}
provides an e.i. distributive pre-lattice such that not all c.e. distributive
pre-lattices reduce to it via a reduction inducing a monomorphisms between
the corresponding quotient structures.

\subsection{Further notations and terminology}\label{ssct:further}
If $R=\langle \omega, \leq_{R} \rangle$ is a pre-order then we use the
following notations. We write $x <_{R} y$ for $x \leq_{R} y$ but $y
\cancel{\leq_{R}} x$; the symbol $|_{R}$ denotes $\leq_{R}$-incomparability,
i.e. $x \rel{|_{R}} y$ if $x \cancel{\leq_{R}} y$ and $y \cancel{\leq_{R}}
x$; if $a \in \omega$ and $X \subseteq \omega$ then we write $a \leq_{R} X$
if $a \leq_{R} x$, for all $x \in X$ (similarly define $a <_{R} X$, $X
\leq_{R} a$, and $X  <_{R} a$). We also use interval notation, i.e.
$[x,y]_{\leq_{R}}=\{a: x \leq_{R} a \leq_{R} y\}$, and other possible
interval variations such as for instance $(x,y)_{\leq_{R}}$.

In some of the proofs, we will make use of infinite computable lists of fixed
points as given by the Recursion Theorem. The use of these infinite lists can
be formally justified as follows: we fix a single index $j$ so that we
control $\phi_j$ by the Recursion Theorem, and we then take a computable list
$(j_i)_{i\in\omega}$ of indices for the columns $\phi_{j_i}(k)=\phi_j(\langle
i,k\rangle)$. We can then control the $\phi_{j_i}$ in any order we wish, as
we are simply controlling the single function $\phi_j$. An alternative formal
justification to arguments using infinite lists of fixed points is also
provided by the Case Functional Recursion Theorem~\cite{Case:Functional}: see
also \cite{Odifreddi:BookII} for useful comments about this theorem.

All pre-orders considered in this paper are assumed to be non-trivial, i.e.
the associated quotient structures do not collapse to a single element.

\section{Ceers, pre-orders and effective inseparability}
From the theory of ceers we recall the following definitions (see e.g.
\cite{ceers}). Particularly important for us is ``uniform finite
precompleteness'', a notion due to Montagna~\cite{Montagna:ufp}, and widely
used in \cite{Bernardi-Montagna:extensional,Shavrukov:ufp}.

\begin{defn}\label{def:ufp-etc}
Let $R$ be a non-trivial ceer (i.e. $R$ is a ceer with at least two
equivalence classes):
\begin{itemize}

\item $R$ is \emph{effectively inseparable} (abbreviated as \emph{e.i.}) if
    every pair of distinct equivalence classes is e.i.;

\item $R$ is \emph{uniformly effectively inseparable} (abbreviated as
    \emph{u.e.i.}) if there exists a partial computable function
    $\chi(x,y,u,v)$ such that if $x \cancel{\rel{R}} y$ then the partial
    computable function $\psi(u,v)=\chi(x,y,u,v)$ is a productive function
    for the pair of equivalence classes $([x]_{R}, [y]_{R})$;

\item $R$ is \emph{precomplete} if there exists a computable function
    $f(e,x)$ such that for all $e,x$ if $\phi_{e}(x)\downarrow$ then
    $f(e,x)\rel{R} \phi_{e}(x)$;

\item $R$ is \emph{uniformly finitely precomplete} (abbreviated as
    \emph{u.f.p.})  if there exists a computable function $k(D,e,x)$
    (called a \emph{totalizer}, where $D$ is a finite set given by its
    canonical index) such that for all $D, e,x$ if $\phi_{e}(x)$ converges
    to a number $R$-equivalent to some $d \in D$ then $k(D,e,x) \rel{R}
    \phi_{e}(x)$.
\end{itemize}
\end{defn}

\begin{rem}\label{rem:uniformity-for-uei}
Notice that since for pairs of c.e. sets it is possible to uniformly go from
productive functions to total productive functions (see
e.g.~\cite[p.44]{Soare:Book}), it is easy to see that it is possible for
ceers to uniformly go from uniform productive functions witnessing
u.e.i.-ness to total uniform productive functions witnessing u.e.i.-ness.
Thus we can assume in the following that our productive functions witnessing
u.e.i.-ness for ceers are in fact total.
\end{rem}

\begin{fact}\label{fact:implications}
For ceers we have: $\textrm{precomplete } \Rightarrow \textrm{u.f.p. }
\Rightarrow \textrm{u.e.i. } \Rightarrow \textrm{e.i.}$. It is not known
whether the implication  $\textrm{u.e.i. } \Rightarrow \textrm{u.f.p.}$
holds; the other implications are strict. Every u.e.i. ceer $R$ is
$\Sigma^0_1$-universal as an equivalence relation, i.e. every ceer can be
reduced to it.
\end{fact}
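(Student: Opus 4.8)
The plan is to handle the chain of implications, their strictness, and the universality statement by separate arguments, since only two of them carry real content. Two implications I would dispatch immediately from the definitions. For $\text{precomplete} \Rightarrow \text{u.f.p.}$, a precompleteness function $f(e,x)$ is already a totalizer that ignores its finite-set argument: setting $k(D,e,x) := f(e,x)$, whenever $\phi_e(x)\downarrow$ — in particular whenever $\phi_e(x)$ converges to a number $R$-equivalent to some $d \in D$ — we have $f(e,x) \rel{R} \phi_e(x)$ as required. For $\text{u.e.i.} \Rightarrow \text{e.i.}$, I would only unwind definitions: given distinct classes, i.e. $x \cancel{\rel{R}} y$, the function $\psi(u,v) = \chi(x,y,u,v)$ supplied by u.e.i.-ness is by definition a productive function for $([x]_R,[y]_R)$, so every pair of distinct classes is e.i.

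The substantive implication is $\text{u.f.p.} \Rightarrow \text{u.e.i.}$. Starting from a totalizer $k$, I want a uniform productive function. Fix $x \cancel{\rel{R}} y$ and indices $u,v$ with $[x]_R \subseteq W_u$, $[y]_R \subseteq W_v$ and $W_u \cap W_v = \emptyset$. Using the Recursion Theorem with parameters $x,y,u,v$, I would produce an index $e$ for the following procedure: first compute $z := k(\{x,y\}, e, 0)$ (legitimate since $k$ is total and $e$ is available by self-reference), then search for $z$ to be enumerated into $W_u$ or into $W_v$, outputting $y$ if $z$ enters $W_u$ and $x$ if $z$ enters $W_v$. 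The candidate productive value is $z$ itself. The key observation is that whenever $\phi_e(0)$ halts it halts with a value in $D = \{x,y\}$, so the totalizer forces $z = k(D,e,0) \rel{R} \phi_e(0)$; thus $z \in W_u$ would give $z \rel{R} y$, hence $z \in [y]_R \subseteq W_v$, contradicting disjointness, and symmetrically $z \in W_v$ is impossible. Therefore $z \notin W_u \cup W_v$, and since $e$ (hence $z$) is obtained uniformly, $\chi(x,y,u,v) := k(\{x,y\},e,0)$ witnesses u.e.i. The only delicate point is the legitimacy of the self-referential call to $k(\cdot,e,\cdot)$ inside $\phi_e$, which the Recursion Theorem supplies.

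For universality I would reduce an arbitrary ceer $S$ to $R$, assuming by Remark~\ref{rem:uniformity-for-uei} a total uniform productive function $\chi$. Productivity already manufactures fresh classes computably: given pairwise inequivalent $r_0,\dots,r_t$, feeding $\chi(r_0,r_1,u,v)$ the disjoint c.e. sets $W_u = [r_0]_R$ and $W_v = [r_1]_R \cup \cdots \cup [r_t]_R$ returns an element outside $[r_0]_R \cup \cdots \cup [r_t]_R$; this handles the separation requirements and already reduces $\Id$. The hard part is reconciling such committed separations with the positive requirements: when $S$ later merges two numbers previously assigned inequivalent fresh values, $R$ cannot be altered. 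I expect this merging step to be the main obstacle, and I would resolve it self-referentially — defining $g(n)$, via the Recursion Theorem, not as a bare fresh element but as a productive output computed against class sets $W_{p(n)}$ whose growth is tied to the evolving $S$-classes, so that the totality and uniformity of $\chi$ drive the outputs for $S$-equivalent arguments into a common $R$-class while keeping $S$-inequivalent arguments apart. This is precisely where uniformity, as opposed to bare e.i.-ness, is essential.

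Finally, the strictness claims I would not fold into the arguments above but establish by separating examples: a u.f.p. ceer that is not precomplete witnesses strictness of the first implication, and an e.i. ceer that is not u.e.i. witnesses strictness of the last. The converse of the middle implication, $\text{u.e.i.} \Rightarrow \text{u.f.p.}$, I would leave open exactly as stated, offering no argument there.
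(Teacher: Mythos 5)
The paper does not actually prove this Fact: its ``proof'' is a pointer to the literature (``For a proof see \cite{ceers}; the fact that precomplete ceers are u.e.i.\ was first noticed in \cite{Visser:Numerations}''), so your attempt to supply the arguments is doing more work than the source. Your treatment of the implication chain is correct and complete: $k(D,e,x):=f(e,x)$ does witness that precomplete implies u.f.p.; u.e.i.\ $\Rightarrow$ e.i.\ is indeed definitional; and your u.f.p.\ $\Rightarrow$ u.e.i.\ argument (the self-referential index $e$ computing $z=k(\{x,y\},e,0)$ and answering $y$ or $x$ according to which of $W_u,W_v$ captures $z$, so that the totalizer property turns membership of $z$ in either set into a violation of disjointness) is exactly the standard proof and is sound, including the parametrized use of the Recursion Theorem.

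The genuine gaps are in the last two claims of the Fact. For universality of u.e.i.\ ceers you correctly isolate the obstacle --- $S$ may later collapse numbers you have already committed to distinct productive outputs --- but your resolution is only an aspiration (``drive the outputs for $S$-equivalent arguments into a common $R$-class''), not a mechanism. The device that actually does this is the one used in the proof of Theorem~\ref{thm:from-ei-to-ufp}: when you control both $W_u\supseteq[a]_R$ and $W_v\supseteq[b]_R$ and keep them disjoint, enumerating the productive value $p(u,v)$ \emph{into} $W_v$ leaves disjointness and coverage intact, so the only escape from the contradiction $p(u,v)\notin W_u\cup W_v$ is that $p(u,v)$ was already in $[a]_R\subseteq W_u$ --- i.e.\ you \emph{force} the output into a designated class. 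Without spelling out how the sets $W_{p(n)}$ are enumerated in response to $S$-collapses, and how the inductive invariant ``$g(i)\cancel{\rel{R}}g(j)$ whenever $i\cancel{\rel{S}}j$ so far'' is maintained so that the uniform productive function may legitimately be invoked, the universality claim is not established. Likewise, the strictness assertions are left entirely unproved: saying that a u.f.p.\ non-precomplete ceer would witness strictness merely restates what strictness means; no such separating examples are constructed or cited.
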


\begin{proof}
For a proof see \cite{ceers}. The fact that precomplete ceers are u.e.i. was
first noticed in \cite{Visser:Numerations}.
\end{proof}

Via the equivalence relation induced by a pre-ordering relation we export
Definition~\ref{def:ufp-etc} to pre-orders and pre-structures, as
follows.

\begin{defn}\label{defn:ei-preorders-and-other-things}
A c.e. pre-order $R=\langle \omega, \leq_R\rangle$ is called
\emph{effectively inseparable}, (respectively: \emph{uniformly effectively
inseparable}, \emph{precomplete}, \emph{uniformly finitely precomplete}), if
so is the associated ceer $\equiv_R$.
\end{defn}

\begin{rem}
A warning on the terminology introduced in the previous definition: to be
meticulous, one could find the definition of an e.i. pre-lattice not coherent
with Definition~\ref{defn:ei-preorders-and-other-things}, for which an e.i.
pre-order is a c.e. pre-order in which all pairs of distinct equivalence
classes are e.i., whereas for e.i. pre-lattices we only require, following
\cite{Montagna-Sorbi:Universal}, effective inseparability of the equivalence
classes of $0,1$. It will turn out in any case
(Theorem~\ref{thm:from-ei-to-ufp}) that e.i. pre-lattices are also e.i.
pre-orders in the sense of
Definition~\ref{defn:ei-preorders-and-other-things}, in fact they are
u.e.i.\,.
\end{rem}

The following observation shows a simple application of effective
inseparability to c.e. pre-orders and c.e. pre-lattices.

\begin{obs}\label{obs:incomparability}
If $R=\langle \omega, \leq_{R} \rangle$ is a c.e. pre-order so that
$([u]_{R}, [v]_{R})$, where $u<_{R} v$, is an e.i. pair of sets then for
every non-empty c.e. set $X$ one can find (uniformly from $u,v$ and a c.e.
index of $X$) an element $y$ such that if $u<_{R}X<_{R} v$ then $y |_{R} X$.
If in addition $R$ is a c.e. pre-lattice then we can find $y$ as above such
that if $u<_{R}X<_{R} v$ then $y |_{R} X$ and $u<_R y <_R v$.
\end{obs}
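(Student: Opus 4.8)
The plan is to feed the productive function of the e.i.\ pair $([u]_R,[v]_R)$ two c.e.\ sets built so that, under the hypothesis $u<_R X<_R v$, they contain $[u]_R$ and $[v]_R$ respectively and are disjoint, while their union records every comparability between the element we build and $X$ (and with $u,v$). First I would fix a total productive function $\varphi$ for the pair $([u]_R,[v]_R)$ (using that for c.e.\ pairs one passes uniformly to a total productive function). I would then define two c.e.\ sets $W_\alpha,W_\beta$, with indices $\alpha,\beta$ computable uniformly from $u,v$ and a c.e.\ index of $X$, by a single dovetailed, \emph{first-come-first-served} enumeration: each number $z$ is put into $W_\alpha$ as soon as an ``$\alpha$-reason'' appears and into $W_\beta$ as soon as a ``$\beta$-reason'' appears, whichever is discovered first (ties broken towards $\alpha$), and once $z$ has been placed on one side it is never placed on the other. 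This keeps $W_\alpha\cap W_\beta=\emptyset$ by construction, and a number lies in $W_\alpha\cup W_\beta$ exactly when some reason for it is ever found.

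For the first claim I take the $\alpha$-reason for $z$ to be ``$z\leq_R u$ or $(\exists x\in X)\,z\leq_R x$'' and the $\beta$-reason to be ``$v\leq_R z$ or $(\exists x\in X)\,x\leq_R z$''; these are c.e.\ in $z$ since $\leq_R$ is c.e.\ and $X$ is c.e. Assuming $u<_R X<_R v$: if $z\equiv_R u$ then $z\leq_R u$ is an $\alpha$-reason, while no $\beta$-reason can ever hold, as $v\leq_R z$ would give $v\leq_R u$ and $x\leq_R z$ (for $x\in X$) would give $x\leq_R u$, both impossible since $u<_R v$ and $u<_R x$; hence $z\in W_\alpha$, so $[u]_R\subseteq W_\alpha$, and symmetrically $[v]_R\subseteq W_\beta$. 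Thus the three hypotheses of productivity hold, so $y:=\varphi(\alpha,\beta)\downarrow\notin W_\alpha\cup W_\beta$, which means no reason for $y$ ever appears; in particular $y\cancel{\leq_R}x$ and $x\cancel{\leq_R}y$ for every $x\in X$, i.e.\ $y\rel{|_{R}}X$ (and as a byproduct $y\cancel{\leq_R}u$ and $v\cancel{\leq_R}y$).

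For the second claim, when $R$ is a c.e.\ pre-lattice I would compose with the computable correction $c(z)=(z\vee u)\wedge v$, which is $\equiv_R$-congruent and satisfies $u\leq_R c(z)\leq_R v$ for every $z$, and replace each $z$ by $c(z)$ in the reasons above. The key point is that for $z\equiv_R u$ one has $c(z)\equiv_R(u\vee u)\wedge v\equiv_R u$ and for $z\equiv_R v$ one has $c(z)\equiv_R(v\vee u)\wedge v\equiv_R v$ (using $u\leq_R v$ and that $\equiv_R$ is a congruence), so the containment argument of the first part goes through with $c(z)$ in place of $z$ and again yields $y:=\varphi(\alpha,\beta)\notin W_\alpha\cup W_\beta$. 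Now ``no reason for $y$'' reads $c(y)\cancel{\leq_R}u$, $v\cancel{\leq_R}c(y)$, and $c(y)\rel{|_{R}}X$; combined with the always-true $u\leq_R c(y)\leq_R v$ this gives $u<_R c(y)<_R v$, so $y^\ast:=c(y)=(y\vee u)\wedge v$ satisfies both $u<_R y^\ast<_R v$ and $y^\ast\rel{|_{R}}X$.

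The step I expect to be the crux is reconciling disjointness of $W_\alpha,W_\beta$ with the two containments: the first-come-first-served discipline secures disjointness for free, so everything hinges on the observation that, under $u<_R X<_R v$, an element of $[u]_R$ can never acquire a $\beta$-reason and an element of $[v]_R$ can never acquire an $\alpha$-reason, so neither class is ever stolen to the wrong side. Note that no appeal to the Recursion Theorem is needed, since $W_\alpha,W_\beta$ are defined from $u,v$ and $X$ alone and never refer to the productive output $y$; the whole procedure is therefore uniform in $u,v$ and a c.e.\ index of $X$, given a productive function for the pair.
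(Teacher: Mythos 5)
Your proof is correct and follows essentially the same route as the paper: the paper applies the productive function to a disjoint refinement (via the Reduction Principle) of the c.e.\ sets $\{z:(\exists x\in X)\,z\leq_R x\}$ and $\{z:(\exists x\in X)\,x\leq_R z\}$, and in the lattice case inserts the same correction $(u\vee z)\wedge v$ into the defining conditions. Your first-come-first-served enumeration is just the Reduction Principle unfolded, and your extra reasons $z\leq_R u$, $v\leq_R z$ are a harmless variant of the paper's derivation of strictness from $X\neq\emptyset$.
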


\begin{proof}
Assume that $u,v, X$ are as in the statement of the observation. Let $p$ be a
productive function for the pair $([u]_{R}, [v]_{R})$ of e.i. sets. Let
\[
U=\{y: (\exists x \in X)[y \leq_{R} x]\}  \quad \text{and} \quad
V=\{y: (\exists x\in X)[y \geq_{R} x]\},
\]
(thus $u\in U$ and $v\in V$) and by the Reduction Principle (see
\cite{Rogers:Book}) let $U'\subseteq U$, $V'\subseteq V$ be c.e. sets such
that $U' \cap V'=\emptyset$ and $U\cup V= U' \cup V'$. If $u<_{R}X<_{R} v$
then clearly $[u]_{R} \subseteq U'$ and $[v]_{R}\subseteq V'$. If $u',v'$ are
c.e. indices of $U', V'$ then $p(u',v') \notin U\cup V$ and thus
$p(u',v')|_{R} x$ with all $x\in X$. To show uniformity, use the fact that
$u',v'$ can be uniformly found starting from $u,v$, and a c.e. index of $X$.

If instead of just a c.e. pre-order we start with a c.e. pre-lattice
$R=\langle \omega, \wedge, \vee, \leq_{R}\rangle$ then define
\[
U=\{y: (\exists x \in X)[(u \lor y) \wedge v \leq_{R} x]\}
\quad \text{and} \quad
V=\{y: (\exists x\in X)[(u \lor y) \wedge v \geq_{R} x]\}.
\]
Arguing as above, by the Reduction Principle get from $U,V$ disjoint c.e.
sets $U', V'$ with indices $u',v'$ such that $U\cup V = U' \cup V'$: then it
is easy to see that $y=(u \lor p(u',v')) \wedge v$ satisfies the claim.
\end{proof}

\section{Examples of e.i. pre-lattices}
In this section we point out several examples of e.i. pre-lattices, many of
them coming from the already existing literature.

\begin{defn}
If $S$ is a c.e. set $S$ of formulas of a first order language $L$, by a
\emph{G\"odel numbering of $S$} we will mean \emph{any} computable $1$-$1$
function from $S$ to $\omega$.
\end{defn}

\subsection{E.i. Boolean pre-algebras}\label{sssct:eiBoolean}
E.i. Boolean pre-algebras appear very naturally in logic.

\begin{eg}\label{ex:BT}
Consider any c.e. consistent extension $T$ of Robinson's systems $R$ or $Q$
for first order arithmetic (as presented for instance in
Definitions~III.6.10~and~III.8.2 of \cite{Smorynski-logical}), with the usual
first order language $L$ for arithmetic. Our choice of $R$ is motivated by
the fact that we would like to be as general as possible, and indeed $R$ is
often recognized as a prototypical example of a weak theory of arithmetic,
and although $Q$ interprets $R$ (see e.g.
\cite[Theorem~III.8.3]{Smorynski-logical}) we explicitly mention $Q$ as well
as an example of a weak finitely axiomatizable theory.

Let us fix a G\"odel numbering $\gamma$ of $\textrm{Sent}(L)$ (all sentences
of $L$) \emph{onto} $\omega$ so that $\gamma(\neg 0=0)=0$ and
$\gamma(0=0)=1$.
\begin{defn}
The \emph{Boolean pre-algebra of $T$ relatively to $\gamma$} is the Boolean
pre-algebra
\[
B_T^\gamma=\langle \omega, \wedge^\gamma, \lor^\gamma, \neg^\gamma, 0,1,
\leq_T^\gamma \rangle
\]
where $\wedge^\gamma, \lor^\gamma, \neg^\gamma$ are (via $\gamma$) the usual
propositional connectives (i.e. $x\wedge^\gamma y=\gamma(\gamma^{-1}(x)\wedge
\gamma^{-1}(x))$, etc.), and $x\leq_T^\gamma y$ if $\vdash_T \gamma^{-1}(x)
\rightarrow \gamma^{-1}(y)$.
\end{defn}

\begin{rem}\label{rem:independence}
It is easy to see that up to computable isomorphisms induced by computable
permutations of $\omega$, the Boolean pre-algebra $B_{T}^\gamma$ does not
depend on the chosen $\gamma$, so it is fair to refer to $B_T^\gamma$ as just
\[
B_T=\langle \omega, \wedge, \lor, \neg, 0,1,\leq_T \rangle,
\]
without mentioning, unless when really needed, the chosen G\"odel numbering.
Notice that $B_T$ is not trivial by consistency of $T$.
\end{rem}

\begin{thm}\label{thm:BT}
$B_T$ is an e.i. Boolean pre-algebra.
\end{thm}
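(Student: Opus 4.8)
The plan is to verify the two defining requirements of an e.i. Boolean pre-algebra for $B_T$: that it is a c.e. bounded pre-lattice whose associated quotient is a (non-trivial) bounded lattice, and that the pair $([0]_{T},[1]_{T})$ of equivalence classes is effectively inseparable. The first part is essentially bookkeeping. Since $\gamma$ is a G\"odel numbering \emph{onto} $\omega$, the propositional operations $\wedge,\lor,\neg$ are computable; and $x \leq_{T} y$ holds iff $\vdash_{T} \gamma^{-1}(x) \rightarrow \gamma^{-1}(y)$, which is c.e. because $T$ is a c.e. theory and provability is c.e. Reflexivity and transitivity of $\leq_{T}$ follow from the fact that $\vdash_{T}$ is a consequence relation closed under the usual propositional rules, so $\leq_{T}$ is a c.e. pre-ordering. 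The induced equivalence $\equiv_{T}$ is provable equivalence, which is a congruence for the propositional connectives (replacing a formula by a provably equivalent one inside a Boolean combination preserves provable equivalence), and the quotient is exactly the Lindenbaum sentence algebra of $T$, which is a Boolean algebra with least element $[\neg 0=0]_{T}=[0]_{T}$ and greatest element $[0=0]_{T}=[1]_{T}$. Consistency of $T$ (Remark~\ref{rem:independence}) guarantees $0 \not\equiv_{T} 1$, so the quotient is non-trivial. Thus $B_{T}$ is an e.i.\ Boolean pre-algebra \emph{provided} the inseparability clause holds.

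The heart of the matter is showing that $([0]_{T},[1]_{T})$ is an e.i.\ pair. Concretely, $[0]_{T}=\{x : \vdash_{T} \neg \gamma^{-1}(x)\}$ is (the set of codes of) the refutable sentences and $[1]_{T}=\{x : \vdash_{T} \gamma^{-1}(x)\}$ is the set of provable sentences; these are disjoint by consistency. I would produce a productive function directly via the Recursion Theorem in the style of the G\"odel--Rosser argument. Given c.e.\ indices $u,v$ with $[0]_{T}\subseteq W_{u}$, $[1]_{T}\subseteq W_{v}$, and $W_{u}\cap W_{v}=\emptyset$, I want to compute a sentence (code) $\phi(u,v)$ that lands outside $W_{u}\cup W_{v}$. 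Using the representability of c.e.\ sets in $R$ (or $Q$) and a fixed-point/diagonal construction, build a sentence $\sigma$ asserting, informally, ``my own code enters $W_{u}$ no later than it enters $W_{v}$'' (a Rosser-style self-referential sentence relative to the two c.e.\ sets $W_{u},W_{v}$ rather than to the proof predicate). One then argues by cases: if the code of $\sigma$ were in $W_{u}$, the arithmetized statement would make $\sigma$ refutable in $T$, hence its code would lie in $[0]_{T}\subseteq W_{u}$ — but also, by the Rosser trick, provability of $\neg\sigma$ combined with $\sigma$'s code being in $W_{u}$ forces the code into $W_{v}$, contradicting disjointness; symmetrically for $W_{v}$. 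Hence the code of $\sigma$ lies in neither set, and the construction is uniform in $u,v$, giving the desired partial computable productive function $\phi(u,v)$.

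The main obstacle is that the classical effective inseparability of the pair (refutable, provable) is usually proved for theories strong enough to have a smooth arithmetization, whereas here $T$ extends only $R$ or $Q$. The delicate point is therefore to carry out the Rosser-style fixed-point argument using only the weak representability properties guaranteed by $R$ (namely, that every c.e.\ set is weakly/$\Sigma_1$-representable, and that $Q$, and $R$ via the interpretation mentioned in Example~\ref{ex:BT}, proves the requisite $\Sigma_1$-completeness facts about numerals and the ordering of proof/enumeration stages). I would lean on the standard fact that effective inseparability of $(\{x:\vdash_T\neg x\},\{x:\vdash_T x\})$ holds for any consistent c.e.\ theory in which all c.e.\ sets are weakly representable — this is essentially the effective form of the Gödel--Rosser theorem, and it is available for $R$ and $Q$ — so the technical content reduces to invoking representability in these weak systems and checking the uniformity of the fixed point in $u,v$. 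A cleaner alternative, if the paper prefers to avoid re-deriving this, is to cite the known effective inseparability result for the provable/refutable pair over $R$ and $Q$ directly, and then observe that under the numbering $\gamma$ this pair is exactly $([0]_{T},[1]_{T})$, so that $B_{T}$ is an e.i.\ Boolean pre-algebra.
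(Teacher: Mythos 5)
Your proposal is sound in outline but takes a genuinely different route from the paper on the one substantive point, the effective inseparability of $([0]_T,[1]_T)$. You construct a productive function directly: given disjoint c.e.\ supersets $W_u\supseteq[0]_T$, $W_v\supseteq[1]_T$, you diagonalize inside the theory to get a Rosser sentence $\sigma$ comparing the stages at which its own code enters $W_u$ and $W_v$. The paper never performs this self-reference: it fixes once and for all an e.i.\ pair $(A_0,A_1)$ of c.e.\ sets with strict $\Sigma_1$ definitions $(\exists w)\psi_i(w,v)$, forms the witness-comparison formula $H(v)=(\exists w)(\psi_0(w,v)\wedge(\forall w'\leq w)\neg\psi_1(w',v))$, invokes Smory\'nski's Lemma~III.6.27 to get $m\in A_0\Rightarrow\;\vdash_T H(\overline m)$ and $m\in A_1\Rightarrow\;\vdash_T\neg H(\overline m)$ already over $R$ or $Q$, and then transfers e.i.-ness along the $1$-reduction $m\mapsto\gamma(H(\overline m))$ using the closure of e.i.\ pairs under containment and $m$-reduction stated in the introduction. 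What the paper's route buys is that all diagonalization is quarantined in the purely recursion-theoretic facts (existence of an e.i.\ pair, its closure properties), the only arithmetic needed is the provable witness comparison over $R$/$Q$, and the reducing formula $H$ is a \emph{fixed strict $\Sigma_1$ formula} independent of $u,v$ --- which is exactly what lets the same proof be recycled verbatim for Theorem~\ref{thm:Qei}. Your route would need the very same Lemma~III.6.27-type estimates (weak representability alone is not enough; the provable ordering of witnesses is the load-bearing fact, as you note) plus a fixed point uniform in $(u,v)$, so it is not lighter, and your $\sigma$ depends on $W_u,W_v$ so it does not hand you Theorem~\ref{thm:Qei} for free. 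One concrete slip to repair: with $\sigma$ asserting ``my code enters $W_u$ no later than it enters $W_v$'' and $W_u\supseteq[0]_T$ (refutables), $W_v\supseteq[1]_T$ (provables), membership of $\gamma(\sigma)$ in $W_u$ must force $\sigma$ \emph{provable} (a true $\Sigma_1$ statement), hence $\gamma(\sigma)\in[1]_T\subseteq W_v$, contradicting disjointness; symmetrically, membership in $W_v$ forces $\neg\sigma$ provable, hence $\gamma(\sigma)\in[0]_T\subseteq W_u$. Your case analysis has these polarities crossed (deriving ``refutable, hence in $W_u$'' from ``in $W_u$'', which is no contradiction); the cross-over is essential. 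Your closing alternative --- citing the known e.i.-ness of the (refutable, provable) pair over $R$ and $Q$ --- is exactly the paper's opening sentence, which it then elaborates only in order to reuse the construction later.
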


\begin{proof}
This immediately follows from the well known fact (see for
instance~\cite{Smorynski-logical}) that the sets of theorems and
anti-theorems of $T$ (where $\sigma$ is an \emph{anti-theorem} if $\neg
\sigma$ is a theorem) form an e.i. pair of c.e. sets. We give however a
somewhat more detailed proof which will be exploited again for
Theorem~\ref{thm:Qei} below. Fix a $\gamma$ as above such that
$B_T=B_T^\gamma$, and let $(A_0,A_1)$ be a disjoint pair of c.e. sets which
are e.i.\,. Consider (see for instance
\cite[Corollary~III.6.20]{Smorynski-logical}) two strict $\Sigma_{1}$
formulas $H_{0}(v), H_{1}(v)$ (i.e.\ each $H_{i}(v)$ is of the form $(\exists
w)\psi_{i}(w,v)$ with $\psi_{i}$ containing no quantifiers or only
quantifiers which are bounded by variables) such that $A_i= \{m: \mathbb{N}
\models H_{i}(\overline{m})\}$ (where $\overline{m}$ is the numeral term
corresponding to the number $m$) and let $H(v)$ be the formula $(\exists
w)(\psi_{0}(w,v) \wedge (\forall w'\leq w)\neg \psi_{1\check{}}(w',v))$.
Using \cite[Lemma~III.6.27]{Smorynski-logical} one can show that if $m \in
A_0$ then $\vdash_U H(\overline{m})$ and if $m \in A_1$ then $\vdash_U \neg
H(\overline{m})$, where $U$ is either $R$ or $Q$. Let now $(X_0, X_1)$ be the
disjoint pair of c.e. sets $X_0=\{m:\; \vdash_T \neg H(\overline{m})\}$ and
$X_1= \{m:\; \vdash_T H(\overline{m})\}$: since $(X_0, X_1) \supseteq (A_1,
A_0)$ we have that $(X_0,X_1)$ is e.i.: on the other hand the computable
function $m \mapsto \gamma(H(\overline{m}))$ gives a $1$-reduction of
$(X_0,X_1)$ to the pair $([0]_{T}, [1]_{T})$ (where $[x]_T$ denotes the
equivalence class of a number $x$ under the equivalence relation induced by
$\leq_T$): therefore the pair $([0]_T, [1]_T)$ is e.i. too, and thus $B_T$ is
an e.i. Boolean pre-algebra.
\end{proof}

Moreover, the associated quotient structure of $B_{T}$ is isomorphic to the
Lindenbaum sentence algebra of $T$, which leads us to the following
definition.

\begin{defn}\label{defn:prealgebraT}
$B_{T}$ as above is called the \emph{Boolean pre-algebra of the Lindenbaum
sentence algebra of $T$}.
\end{defn}
\end{eg}

For introductions to e.i. Boolean pre-algebras and related topics, see
(listed in chronological order) \cite{Nerode-Remmel:Survey,
Montagna-Sorbi:Universal, Nies:Effectively, Shavrukov}.

\subsection{C.e. precomplete pre-lattices and c.e. pre-lattices of sentences}
If $L$ is a c.e. precomplete bounded pre-lattice, then $\equiv_{L}$ is a
precomplete ceer and thus by Fact~\ref{fact:implications} yields a partition
of $\omega$ such that all pairs of distinct equivalence classes are e.i.\,.
Therefore $L$ is an e.i. pre-lattice.

\subsection{C.e. pre-lattices of sentences}
Let us consider a decidable first order language $L$ and a c.e. consistent
theory $T$ on $L$, and let $\mathcal{C} \subseteq \textrm{Sent}(L)$ be an
infinite c.e. set closed under the connectives $\wedge$ and $\lor$, and
intersecting both the set of theorems and the set of anti-theorems of $T$.
Fix a G\"odel numbering  $\gamma$  of $\mathcal{C}$ onto $\omega$ for which
$0$ and $1$ are the G\"odel numbers of a theorem and an anti-theorem,
respectively.

\begin{defn}\label{defn:pre-lattice-of-sentence}
The \emph{pre-lattice of $\mathcal{C}$-sentences relatively to $\gamma$} is
the c.e. bounded pre-lattice
\[
L_{\mathcal{C}/T}^\gamma=\langle \omega, \wedge^\gamma, \vee^\gamma, 0, 1,
\leq_{\mathcal{C}/T}^\gamma \rangle
\]
where $x \leq_{\mathcal{C}/T}^\gamma y$ if and only if $\vdash_T
\gamma^{-1}(x) \rightarrow \gamma^{-1}(y)$, and the operations are given (via
$\gamma$) by the propositional connectives. A \emph{pre-lattice of sentences}
is a pre-lattice of the form $L_{\mathcal{C}/T}^\gamma$, for some choice of
$T$, $\mathcal{C}$, and $\gamma$.
\end{defn}

Once again it is immediate to see that up to computable isomorphisms induced
by computable permutations of $\omega$ the definition does not depend on the
choice of $\gamma$, and thus, unless when strictly necessary for
definiteness, reference to the chosen $\gamma$ will always be omitted in the
following.

Also, up to a computable isomorphism provided by a computable permutation of
$\omega$ it does not matter whether we consider $\mathcal{C}$ or its closure
under provable equivalence in $T$: indeed, suppose that $\mathcal{C}$ is a
set of sentences providing the pre-lattice of sentences $L_{\mathcal{C}/T}$
(relatively, say, to $\gamma$), let
\[
[\mathcal{C}]_{T}=\{\sigma\in \textrm{Sent}(L): (\exists \gamma \in
\mathcal{C})[\vdash_{T} \gamma \leftrightarrow \sigma]\},
\]
and let us consider the pre-lattice of sentences $L_{[\mathcal{C}]_T/T}$
(relatively say to $\delta$). It is easy to see that $h= \delta \circ
\gamma^{-1} $ is a computable function satisfying for every $x,y$,
\[
x \leq_{\mathcal{C}/T} y \Leftrightarrow  h(x) \leq_{[\mathcal{C}]_{T}/T} h(y)
\]
and for every $y$ there exists $x$ such that $h(x)
\equiv_{[\mathcal{C}]_{T}/T} y$. By a standard back-and-forth argument, and
using that the equivalence classes of both $\equiv_{\mathcal{C}/T}$ and
$\equiv_{[\mathcal{C}]_{T}/T}$ are infinite, it is easy to see that from $h$
one can construct a computable permutation of $\omega$ inducing an
isomorphism of $L_{\mathcal{C}/T}$ with $L_{[\mathcal{C}]_T/T}$: At stage $0$
we define $f_0=\emptyset$. At stage $2s+1$, if $s \in \textrm{dom}(f_{2s})$
then let $f_{2s+1}=f_{2s}$; otherwise effectively search for the first $y$
for which $h(s) \equiv_{\mathcal{[C]_{T}}/T} y$ and $y \notin
\textrm{range}(f_{2s})$, and define $f_{2s+1}= f_{2s} \cup \{(s,y)\}$. At
stage $2s+2$, if $s \in \textrm{range}(f_{2s+1})$ then let $f_{2s+2}=
f_{2s+1}$; otherwise effectively search for the first pair $\langle u,
x\rangle$ for which $h(u) \equiv_{\mathcal{[C]_{T}}/T} s$, $x
\equiv_{\mathcal{C}/T} u$, and $x \notin \textrm{dom}(f_{2s+1})$, and define
$f_{2s+1}= f_{2s} \cup \{(x,s )\}$.

Pre-lattices of sentences for c.e. arithmetical first order theories $T$ have
been extensively studied by Shavrukov and Visser (see for instance
\cite{Shavrukov} and \cite{Shavrukov-Visser}), in particular taking
$\mathcal{C}$ to be the set of $\Sigma_n$-sentences for some $n\ge 1$. The
reader is referred to \cite{Smorynski-logical} for a clear introduction to
the classes $\Sigma_n$: the definition of $\Sigma_n$ is purely syntactic (and
in particular each class is decidable), depending only on the language and
not on any specific theory $T$, but our previous observation shows that
whether we take $\Sigma_n$ or its closure under provable equivalence in a
chosen $T$ we get pre-lattices that are isomorphic via an isomorphism
provided by a computable permutation of $\omega$ as long as in either
pre-lattice the pre-ordering relation is provided by $\vdash_T$.

It is easy to see that the quotient structure associated to $L_{\Sigma_n/T}$
is isomorphic to the Lindenbaum lattice of $\Sigma_{n}$-sentences of $T$,
i.e. the bounded sublattice of the Lindenbaum sentence algebra of $T$,
restricted to the equivalence classes of $\Sigma_{n}$-sentences. This
motivates the following definition.

\begin{defn}
$L_{\Sigma_n/T}$ will be called the \emph{pre-lattice of the Lindenbaum
lattice of $\Sigma_{n}$-sentences of $T$}.
\end{defn}

For the following theorem (where $EA$, called Elementary Arithmetic, is the
theory $I\Delta_0+ \textrm{Exp}$) see also
\cite[Example~4.1]{Shavrukov-Visser}.

\begin{thm}[\cite{Visser:Numerations}]\label{thm:EA-precomplete}
If $T$ is a c.e. consistent extension of elementary arithmetic $EA$  then,
for $n \ge 1$, $L_{\Sigma_n/T}$ is precomplete.
\end{thm}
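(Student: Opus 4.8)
The plan is to unfold the definitions. By Definition~\ref{defn:ei-preorders-and-other-things}, $L_{\Sigma_n/T}$ is precomplete exactly when the ceer $\equiv_{\Sigma_n/T}$ is precomplete, i.e.\ when there is a computable $f(e,x)$ with $\phi_e(x)\downarrow \Rightarrow f(e,x)\equiv_{\Sigma_n/T}\phi_e(x)$. Since $\gamma$ is a computable bijection of $\mathcal{C}=\Sigma_n$ onto $\omega$, every number $m$ codes a genuine $\Sigma_n$ sentence $\gamma^{-1}(m)$, and $f(e,x)\equiv_{\Sigma_n/T}\phi_e(x)$ unravels to $\vdash_T \gamma^{-1}(f(e,x))\leftrightarrow\gamma^{-1}(\phi_e(x))$. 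The crucial point is that, because $T$ extends $EA$, three standard metamathematical tools are available already over $EA$: (i) a partial $\Sigma_n$ truth predicate $\mathrm{Tr}_{\Sigma_n}(v)$, for which $EA\vdash \psi\leftrightarrow \mathrm{Tr}_{\Sigma_n}(\overline{c})$ for every $\Sigma_n$ sentence $\psi$ with (arithmetical) code $c$; (ii) $\Sigma_1$-completeness, so that $EA$ proves every true $\Sigma_1$ sentence; and (iii) the provable functionality of Kleene's computation predicate. With these in hand $f$ can be written down directly, with no appeal to the Recursion Theorem.

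Concretely, let $g$ be the elementary (hence $EA$-representable by a provably total $\Delta_1$ term) function sending $m$ to the arithmetical Gödel code of $\gamma^{-1}(m)$, and let $\mathrm{Comp}_e(u,s,m)$ be the bounded predicate ``$\phi_e(u)$ halts in at most $s$ steps with output $m$''. Define, uniformly in $e,x$, the sentence
\[
\sigma_{e,x}\ :=\ \exists s\,\exists m\,\bigl[\mathrm{Comp}_e(\overline{x},s,m)\wedge \mathrm{Tr}_{\Sigma_n}(g(m))\bigr].
\]
Since $\mathrm{Comp}_e$ is $\Delta_1$, $g$ is a provably total $\Delta_1$ term, and $\mathrm{Tr}_{\Sigma_n}$ is $\Sigma_n$, a routine quantifier count shows $\sigma_{e,x}$ is (equivalent to) a $\Sigma_n$ sentence for every $n\ge 1$; as its syntactic description is computable in $e,x$, the function $f(e,x):=\gamma(\sigma_{e,x})$ is total computable.

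It remains to verify that $f$ witnesses precompleteness. Suppose $\phi_e(x)\downarrow=m_0$, halting at some stage $s_0$, and write $\psi_0=\gamma^{-1}(m_0)$ with arithmetical code $c_0=g(m_0)$. Then $\mathrm{Comp}_e(\overline{x},\overline{s_0},\overline{m_0})$ is a true $\Sigma_1$ sentence, so by (ii) $EA\vdash \mathrm{Comp}_e(\overline{x},\overline{s_0},\overline{m_0})$, and by (iii) $EA\vdash \forall s\,\forall m\,[\mathrm{Comp}_e(\overline{x},s,m)\rightarrow m=\overline{m_0}]$. Reasoning inside $EA$: using the witnesses $s_0,m_0$ together with this uniqueness, $\sigma_{e,x}$ is equivalent to $\mathrm{Tr}_{\Sigma_n}(g(\overline{m_0}))$; and $EA$ proves the closed-term identity $g(\overline{m_0})=\overline{c_0}$, so this is equivalent to $\mathrm{Tr}_{\Sigma_n}(\overline{c_0})$, which by the Tarski biconditional (i) for the standard sentence $\psi_0$ is in turn equivalent to $\psi_0$. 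Hence $EA\vdash \sigma_{e,x}\leftrightarrow\psi_0$, a fortiori $\vdash_T \sigma_{e,x}\leftrightarrow\gamma^{-1}(m_0)$, which is precisely $f(e,x)\equiv_{\Sigma_n/T}\phi_e(x)$, as required.

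The only genuinely delicate points are bookkeeping: confirming that $\sigma_{e,x}$ stays in $\Sigma_n$ after the defined term $g$ is replaced by its graph (here $n\ge 1$ is what guarantees $\mathrm{Tr}_{\Sigma_n}$ exists and that the leading existential quantifiers are absorbed into $\Sigma_n$), and invoking the existence of the partial $\Sigma_n$ truth predicate with its provable Tarski biconditionals. Both are classical facts about $EA$, and it is exactly the availability of partial truth predicates together with $\Sigma_1$-completeness that makes $EA$ the natural base theory here; this is the substance of the cited result of Visser~\cite{Visser:Numerations}.
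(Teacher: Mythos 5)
Your proof is correct, but it takes a different route from the paper's: the paper does not construct the totalizer from scratch. Instead it quotes Visser's result as a black box --- namely, that for every partial computable $\phi$ there is a total computable $f_\phi$ landing on codes of $\Sigma_n$ sentences with $\vdash_T \sigma\leftrightarrow\tau$ whenever $\phi(x)\downarrow=\ulcorner\sigma\urcorner$ --- and then spends its effort only on the bookkeeping of transferring that fact from the standard arithmetization G\"odel numbering $\ulcorner\ \urcorner$ to the bijective numbering $\gamma$ used to present $L_{\Sigma_n/T}$. What you have written is essentially the content \emph{inside} that black box: the explicit sentence $\exists s\,\exists m\,[\mathrm{Comp}_e(\overline{x},s,m)\wedge \mathrm{Tr}_{\Sigma_n}(g(m))]$ built from the partial $\Sigma_n$ truth predicate, verified via provable $\Sigma_1$-completeness and provable determinism of the computation predicate. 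Your version buys self-containedness and makes visible exactly where $EA$ (rather than $Q$ or $R$) is needed --- the availability of $\mathrm{Tr}_{\Sigma_n}$ with its provable Tarski biconditionals --- which is precisely why the paper must later give a separate, weaker argument (Theorem~\ref{thm:Qei}) for $Q$ and $R$; the paper's version is shorter but leaves the reader to consult \cite{Visser:Numerations}. Two small points you correctly flag but should make explicit in a final write-up: $\gamma$ is a bijection onto $\omega$, so you must pass from $\sigma_{e,x}$ to a genuine prenex $\Sigma_n$ sentence by a computable, $T$-provably equivalence-preserving normal form before applying $\gamma$; and the paper's translation step between $\ulcorner\ \urcorner$ and $\gamma$ still has to be performed in your argument as well, since $\mathrm{Tr}_{\Sigma_n}$ speaks about arithmetical codes while $\equiv_{\Sigma_n/T}$ is stated on $\gamma$-numbers --- your function $g$ is exactly that translation, so nothing is missing.
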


\begin{proof}
If $\ulcorner \mbox{ }\urcorner$ is a conventional G\"odel numbering encoding
the syntactic objects and formulas of $L$ as normally used in the
arithmetization of metamathematics (representability of partial computable
functions, c.e. sets, etc.: see e.g.~ \cite{Smorynski-logical} or
\cite{Hajek-Pudlak} for details), and $\phi$ is a partial computable function
then from \cite{Visser:Numerations} we can see that there exists a computable
function $f_\phi$ which always land at $\ulcorner \mbox{ }\urcorner$-numbers
of $\Sigma_n$ sentences, and if $\phi(x)\downarrow = \ulcorner
\sigma\urcorner$, with $\sigma \in \Sigma_n$, then $f_\phi(x)=\ulcorner
\tau\urcorner$ with $\vdash_T \sigma \leftrightarrow \tau$. Let $\gamma$ be a
G\"odel numbering of the $\Sigma_n$-sentences onto $\omega$ so that
$L_{\Sigma_n/T}=L_{\Sigma_n/T}^\gamma$.  Now, given a partial computable
function $\phi$, consider the partial computable function $\psi$ with the
same domain as $\phi$ and $\psi(x)=\ulcorner \sigma \urcorner$ if
$\sigma=\gamma^{-1}(\phi(x))$, and let $g=\gamma(\tau)$ if
$f_\psi(x)=\ulcorner \tau \urcorner$: it is immediate to see that $g$ makes
$\phi$ total modulo $\equiv_T^\gamma$, showing that $\equiv_T^\gamma$ is
precomplete.
\end{proof}

As every c.e. bounded precomplete lattice is e.i., it follows that if $T$ is
a c.e. consistent extension of elementary arithmetic $EA$ then
$L_{\Sigma_n/T}$ is an e.i. pre-lattice. For Robinson's weaker systems $Q$
and $R$, it is not known if $L_{\Sigma_n/T}$ is precomplete. The following
theorem shows however that we still have an e.i. pre-lattice.

\begin{thm}\label{thm:Qei}
If $T$ is a c.e. consistent extension of $Q$ or $R$ then $L_{\Sigma_n/T}$ is
an e.i. pre-lattice.
\end{thm}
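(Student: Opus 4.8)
The plan is to mimic the structure of the proof of Theorem~\ref{thm:BT}, reducing the claim to the effective inseparability of a single, carefully chosen pair of c.e. sets, and then transferring this inseparability to the pair $([0]_{\Sigma_n/T}, [1]_{\Sigma_n/T})$ via a computable $1$-reduction. First I would fix a G\"odel numbering $\gamma$ of the $\Sigma_n$-sentences onto $\omega$ so that $L_{\Sigma_n/T}=L_{\Sigma_n/T}^\gamma$, with $0$ and $1$ the numbers of a fixed theorem and anti-theorem. The goal is then to exhibit a computable function $m \mapsto \gamma(\Theta(\overline m))$, where each $\Theta(\overline m)$ is a $\Sigma_n$-sentence, such that for some e.i. pair $(A_0,A_1)$ of c.e. sets we have $m \in A_0 \Rightarrow \ {\vdash_T} \neg\,\Theta(\overline m)$ (equivalently $\gamma(\Theta(\overline m)) \equiv_{\Sigma_n/T} 0$) and $m \in A_1 \Rightarrow \ {\vdash_T} \Theta(\overline m)$ (so $\gamma(\Theta(\overline m)) \equiv_{\Sigma_n/T} 1$). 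Such a reduction sends $A_1$ into $[1]_{\Sigma_n/T}$ and $A_0$ into $[0]_{\Sigma_n/T}$, so that a suitable $1$-reduction carries the e.i.\ pair $(A_1,A_0)$ into $([1]_{\Sigma_n/T},[0]_{\Sigma_n/T})$; by the closure properties of e.i.\ pairs recalled in the excerpt (a pair above, or $\leq_m$-above, an e.i.\ pair is e.i.), this forces $([0]_{\Sigma_n/T},[1]_{\Sigma_n/T})$ to be e.i., which is exactly the assertion that $L_{\Sigma_n/T}$ is an e.i.\ pre-lattice.

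The key point is producing the sentence $\Theta(\overline m)$ inside the syntactic class $\Sigma_n$ while retaining the provability dichotomy that the proof of Theorem~\ref{thm:BT} obtained for the base case. For $n=1$ one can essentially reuse the formula $H(v)$ built there from two strict $\Sigma_1$ formulas $H_0,H_1$ defining an e.i.\ pair $(A_0,A_1)$, since $H(v)$ is already (provably in $Q$ or $R$) $\Sigma_1$ and Lemma~III.6.27 of \cite{Smorynski-logical} gives $m\in A_0 \Rightarrow {\vdash_U} H(\overline m)$ and $m\in A_1 \Rightarrow {\vdash_U}\neg H(\overline m)$ for $U$ equal to $Q$ or $R$. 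For general $n\ge 1$ I would pass to $\Sigma_n$ by relativizing: start from an e.i.\ pair whose two sides are defined by strict $\Sigma_n$ formulas (which exist, as every c.e.\ set is $\Sigma_1$ hence $\Sigma_n$, and strictness can be arranged by the usual normal-form conventions), and then either prepend the appropriate block of bounded-witness quantifiers as in the base case or invoke a $\Sigma_n$-analogue of the Smory\'nski machinery. The resulting $\Theta(\overline m)$ must be genuinely $\Sigma_n$ \emph{as a syntactic object} so that $\gamma(\Theta(\overline m))$ lands in the pre-lattice, and simultaneously must satisfy the two provability implications.

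The main obstacle is precisely this simultaneous demand in the weak theories $Q$ and $R$: unlike over $EA$, where Theorem~\ref{thm:EA-precomplete} hands us full precompleteness for free, here we have no precompleteness and must argue provability by hand. The delicate step is checking that the witness-comparison formula $H$ (or its $\Sigma_n$-relativization) stays within the class $\Sigma_n$ up to the syntactic conventions fixing $\mathcal C=\Sigma_n$, \emph{and} that the two implications $m\in A_0 \Rightarrow {\vdash_T}\neg\Theta(\overline m)$ and $m\in A_1 \Rightarrow {\vdash_T}\Theta(\overline m)$ are both provable in $T$; the asymmetry of the witness-comparison trick means one direction uses $\Sigma_n$-completeness of $T$ while the other needs the strict-$\Sigma_n$ refutation of the competing disjunct, and verifying the bounded-quantifier bookkeeping goes through in $Q$ and $R$ (via the relevant refinement of Lemma~III.6.27) is where the real care is required. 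Once this is in place, the transfer to $([0]_{\Sigma_n/T},[1]_{\Sigma_n/T})$ and the conclusion are routine, exactly as at the end of the proof of Theorem~\ref{thm:BT}.
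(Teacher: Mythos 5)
Your overall strategy is the paper's: exhibit a computable map $m\mapsto\gamma(\Theta(\overline m))$ landing in the $\Sigma_n$-sentences that $1$-reduces an e.i.\ pair of c.e.\ sets to $([0]_{\Sigma_n/T},[1]_{\Sigma_n/T})$, and then invoke the closure of effective inseparability under $\leq_m$. For $n=1$ your execution is exactly the paper's. The problem is the case $n\ge 2$, where you propose to ``relativize'' the witness-comparison construction to the $\Sigma_n$ level and flag as the delicate step the verification that $T$ proves the two implications for the relativized formula. That step, as you describe it, cannot be carried out: the direction $m\in A_0\Rightarrow{\vdash_T}\Theta(\overline m)$ in the Smory\'nski-style argument rests on provable $\Sigma_1$-completeness of $Q$ and $R$, and these theories are not $\Sigma_n$-complete for $n\ge 2$. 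A genuinely $\Sigma_n$-level witness comparison over $Q$ or $R$ has no reason to yield the provability dichotomy, so the ``real care'' you defer to is not a bookkeeping issue but an obstruction to that route.

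The resolution is the one-line observation the paper makes, and which you actually state in passing without exploiting: the formula $H(v)$ from the proof of Theorem~\ref{thm:BT} is already $\Sigma_1$, hence a $\Sigma_n$-sentence for every $n\ge 1$ (the classes being cumulative, and in any case the paper has noted that closure under provable $T$-equivalence changes the pre-lattice only up to computable isomorphism). So the very same function $m\mapsto\gamma(H(\overline m))$ already takes values in the domain of $L_{\Sigma_n/T}$ and gives the required $1$-reduction of the e.i.\ pair $(X_0,X_1)$ to $([0]_{\Sigma_n/T},[1]_{\Sigma_n/T})$ for all $n\ge 1$ simultaneously. No relativization, no new formula, and no $\Sigma_n$-analogue of Lemma~III.6.27 is needed; dropping that detour turns your proposal into the paper's proof.
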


\begin{proof}
Let $n \ge 1$ be given, and fix a suitable G\"odel numbering $\gamma$ of the
$\Sigma_n$-sentences onto $\omega$  so that $L_{\Sigma_n/T}=
L_{\Sigma_n/T}^\gamma$. By consistency of $T$, $L_{\Sigma_n/T}$ is not
trivial. Notice that the formula $H(v)$ used in the proof of
Theorem~\ref{thm:BT} is $\Sigma_1$: therefore if $(X_0,X_1)$ is the e.i. pair
of c.e. sets as in that proof then the computable function $m \mapsto
\gamma(H(\overline{m}))$ gives a $1$-reduction of $(X_0,X_1)$ to the pair
$([0]_{\Sigma_n/T}, [1]_{\Sigma_n/T})$ which is therefore e.i.\,.
\end{proof}

\subsection{E.i. Heyting pre-algebras}\label{special-HA}
Examples of e.i. Heyting pre-algebras which are not Boolean pre-algebras come
from intuitionistic logic. For instance, Let $iT$ be any c.e. consistent
intuitionistic extension of Heyting Arithmetic $HA$ in the usual arithmetical
first order language $L$, and, having fixed a  G\"odel numbering $\gamma$
from $\textrm{Sent}(L)$ onto $\omega$ with $\gamma(\neg 0=0)=0$ and
$\gamma(0=0)=1$, let $\IT^\gamma=\langle \omega, \wedge^\gamma, \lor^\gamma,
\rightarrow^\gamma, \neg^\gamma, 0, 1, \leq_{iT}^\gamma\rangle$ be the c.e.
Heyting pre-algebra where the operations are the connectives (via G\"odel
numbers given by $\gamma$) and $x \le_{iT}^\gamma y$ if and only if
$\vdash_{iT} \gamma^{-1}(x) \rightarrow \gamma^{-1}(y)$. Once again, up to
computable isomorphisms induced by computable permutations of $\omega$, the
definition is independent of $\gamma$, so the superscript $\gamma$ will be
omitted unless when strictly necessary for a clearer understanding of the
text. As the Lindenbaum sentence algebra of $iT$ is isomorphic with the
quotient structure of $\IT$ it is fair to give the following definition.

\begin{defn}\label{defn:preHalgebras}
The Heyting pre-algebra $\IT$ is called the \emph{Heyting pre-algebra of the
Lindenbaum sentence algebra of $iT$}.
\end{defn}

By an argument similar to the proof of Theorem~\ref{thm:BT}, using similar
arithmetization tools (available in Heyting Arithmetic), one can show
effective inseparability of $\IT$. We choose however a different argument,
based on the G\"odel-Gentzen double-negation translation, which we
particularly like for its simplicity and clarity.

\begin{lemma}\label{lem:ex-Heyting}
$\IT$ is an e.i. Heyting pre-algebra.
\end{lemma}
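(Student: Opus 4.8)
The plan is to transfer effective inseparability from a classical arithmetic to $iT$ through the G\"odel--Gentzen negative (double-negation) translation $\sigma \mapsto \sigma^{g}$, exploiting that $iT$ extends $\HA$. What we must show is that the pair $([0]_{iT}, [1]_{iT})$ is e.i. Recall that $\sigma \mapsto \sigma^{g}$ is a primitive recursive operation on sentences of $L$ with the two properties I will need: (i) $\vdash_{\mathrm{PA}} \sigma$ if and only if $\vdash_{\HA} \sigma^{g}$; and (ii) $\vdash_{\HA} (\neg\sigma)^{g} \leftrightarrow \neg(\sigma^{g})$ (indeed, in the usual formulation $(\neg\sigma)^{g}$ is literally $\neg(\sigma^{g})$). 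Here $\mathrm{PA}$ denotes Peano Arithmetic, a c.e.\ consistent classical extension of $Q$.

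First I would fix the starting e.i.\ pair. By Theorem~\ref{thm:BT} applied to $\mathrm{PA}$, the Boolean pre-algebra $B_{\mathrm{PA}}$ is e.i., so the pair $(A_0, A_1)$ is e.i., where (relative to the G\"odel numbering $\gamma$ of $B_{\mathrm{PA}}$) $A_1 = [1]_{\mathrm{PA}}$ is the set of codes of theorems of $\mathrm{PA}$ and $A_0 = [0]_{\mathrm{PA}}$ is the set of anti-theorems, i.e.\ the codes of $\sigma$ with $\vdash_{\mathrm{PA}} \neg\sigma$.

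Next I would check that translation followed by the G\"odel numbering $\delta$ of $\IT=\IT^{\delta}$ carries this pair into $([0]_{iT}, [1]_{iT})$. Let $f$ be the computable function $f(x) = \delta\big((\gamma^{-1}(x))^{g}\big)$, and write $\sigma=\gamma^{-1}(x)$. If $\sigma \in A_1$, then $\vdash_{\mathrm{PA}}\sigma$, so by (i) $\vdash_{\HA}\sigma^{g}$, and since $iT \supseteq \HA$ we get $\vdash_{iT}\sigma^{g}$; hence $f(x) = \delta(\sigma^{g}) \in [1]_{iT}$. If $\sigma \in A_0$, then $\vdash_{\mathrm{PA}}\neg\sigma$, so by (i) $\vdash_{\HA}(\neg\sigma)^{g}$, whence by (ii) $\vdash_{\HA}\neg(\sigma^{g})$ and therefore $\vdash_{iT}\neg(\sigma^{g})$; hence $f(x) = \delta(\sigma^{g}) \in [0]_{iT}$. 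Since $iT$ is consistent, $[0]_{iT}$ and $[1]_{iT}$ are disjoint c.e.\ sets, so $f^{-1}([0]_{iT})$ and $f^{-1}([1]_{iT})$ are disjoint c.e.\ sets containing $A_0$ and $A_1$ respectively. By the transfer principle recalled before Definition~\ref{def:e.i.-pre-lattice}, the inclusion $(A_0,A_1)\subseteq(f^{-1}([0]_{iT}),f^{-1}([1]_{iT}))$ makes the preimage pair e.i., and then $f$ is a genuine componentwise $m$-reduction of that preimage pair to $([0]_{iT},[1]_{iT})$, so the latter is e.i.\ as well. Hence $\IT$ is an e.i.\ Heyting pre-algebra.

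The only genuinely delicate point is the negation clause, property (ii): the whole reduction hinges on the translation commuting with negation up to $\HA$-provable equivalence, which is precisely what lets a $\mathrm{PA}$-refutation of $\sigma$ become an $iT$-refutation of $\sigma^{g}$. Everything else is bookkeeping. I would also stress that I deliberately use only the inclusion (and preimage) half of the transfer principle rather than a two-sided $m$-reduction: since $iT$ may be strictly stronger than $\HA$, the converse implications $\vdash_{iT}\sigma^{g}\Rightarrow\vdash_{\mathrm{PA}}\sigma$ need not hold, so $f$ need not $m$-reduce $(A_0,A_1)$ to $([0]_{iT},[1]_{iT})$ directly; routing through the preimage pair sidesteps this cleanly.
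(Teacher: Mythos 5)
Your proof is correct and follows essentially the same route as the paper: both transfer effective inseparability of $([0]_{PA},[1]_{PA})$ via the G\"odel--Gentzen negative translation (using that it commutes with negation) and then pass to $iT$ using that $iT$ extends $HA$. The only difference is bookkeeping: the paper routes through the intermediate pair $([0]_{HA},[1]_{HA})$ (first a $1$-reduction, then an inclusion), while you route through the preimage pair under $f$ (first an inclusion, then an $m$-reduction) --- both applications of the same transfer principle, and your remark that only the forward implications into $iT$ are needed is a valid point of care.
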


\begin{proof}
Let $PA$ denote Peano Arithmetic, and let $B_{PA}$ be the Boolean pre-algebra
of the Lindenbaum sentence algebra of $PA$. The G\"odel-Gentzen
double-negation translation (see e.g. \cite{Avigad-Feferman} for mathematical
and historical details) ensures that there is a computable mapping $G$
transforming each arithmetical sentence $\sigma$ into an arithmetical
sentence $G(\sigma)$ such that $\vdash_{PA} \sigma$ if and only if
$\vdash_{HA} G(\sigma)$.  Moreover $G(\neg \sigma)=\neg G(\sigma)$, thus
$\vdash_{PA} \neg \sigma$ if and only if $\vdash_{HA} \neg G(\sigma)$ as
well. From this one easily gives a $1$-reduction $([0]_{PA}, [1]_{PA})\leq_1
([0]_{HA}, [1]_{HA})$ of disjoint pairs of sets, implying that $([0]_{HA},
[1]_{HA})$ is e.i.\, as so is $([0]_{PA}, [1]_{PA})$: therefore the pair
$([0]_{iT}, [1]_{iT})$ is e.i. too as it contains the pair $([0]_{HA},
[1]_{HA})$.
\end{proof}

\begin{rem}
There is of course a much wider range of examples of e.i. Heyting
pre-algebras than the ones covered by Lemma~\ref{lem:ex-Heyting}: forthcoming
work will address the issue of examples of e.i. Heyting pre-algebras arising
from intuitionistic versions of weak fragments of arithmetic.
\end{rem}

\subsection{E.i. free distributive pre-lattices}\label{sssct:eidistr}
Useful examples (and also counterexamples) of e.i. pre-structures come from
free pre-structures.

\begin{defn}
Define a pre-structure to be \emph{free on a countably infinite set of
generators} if the corresponding associated quotient structure is free on a
countably infinite set of generators.
\end{defn}

Consider a computable presentation of the form $F^d_{0,1}(X)=\langle \omega,
\wedge, \lor, 0,1\rangle$ of the free bounded distributive lattice on a
decidable infinite set $X$ (with $0,1 \notin X$)) which is computably listed
without repetitions by $\{x_i: i \in \omega\}$. Fix a pair $(U,V)$ of e.i.
c.e. sets, and let $\alpha$ be the c.e. congruence on $F^d_{0,1}(X)$
generated by the set of pairs $\{(x_i,0): i \in U\} \cup \{(x_i, 1): i \in
V\}$. (This argument to encode effective inseparability in a c.e. quotient of
$F^d_{0,1}(X)$ has been suggested in private communication by Shavrukov.)
Define $L^d_{01}= \langle \omega, \wedge, \lor, 0,1, \leq_{L^d_{01}} \rangle$
where $\wedge, \lor$ are the same operations as in $F^d_{0,1}(X)$, and $x
\leq_{L^d_{01}} y$ if $[x]_{\alpha}= [x \wedge y]_{\alpha}$. Throughout the
remainder of this section leading to Example~\ref{ex:eidistr}, for simplicity
denote $L=L^d_{01}$. It is easy to see that $L$ is a c.e. pre-lattice whose
associated quotient structure is $F^d_{0,1}(X)_{/\alpha}$. On the other hand
we can argue that $F^d_{0,1}(X)_{/\alpha}$ is isomorphic with
$F^d_{0,1}(\{x_i: i \notin U\cup V\})$: if $f:X \longrightarrow
F^d_{0,1}(\{x_i: i \notin U\cup V\})$ is the function mapping $x_i \mapsto 0$
if $i \in U$, $x_i \mapsto 1$ if $i \in V$, and $x_i \mapsto x_i$ if $i
\notin U \cup V$, then by freeness $f$ extends to a (unique) homomorphism $g:
F^d_{0,1}(X) \longrightarrow F^d_{0,1}(\{x_i: i \notin U\cup V\})$ which is
onto as $f$ is onto a generating set, and it is not difficult to see that
$\ker(g)=\alpha$, i.e. $g(x)=g(y)$ if and only if $x \rel{\alpha} y$. Thus by
the First Isomorphism Theorem of universal algebra we see that
$F^d_{0,1}(X)_{/\alpha} \simeq F^d_{0,1}(\{x_i: i \notin U\cup V\})$. In
conclusion, $L$ is free bounded distributive on a countably infinite set of
generators; moreover the function $f(i)=x_i$ provides a $1$-reduction of the
e.i. pair $(U,V)$ to the pair $([0]_L, [1]_L)$, giving that $L$ is e.i.\,.

\medskip

We have shown:
\begin{eg}\label{ex:eidistr}
$L^{d}_{01}$ is an e.i. distributive pre-lattice, which is a free bounded
distributive pre-lattice on countably many generators.
\end{eg}

\subsection{E.i. non-distributive pre-lattices}\label{sssct:einondistr}
The same construction as in Section~\ref{sssct:eidistr} but starting with the
free bounded non-distributive lattice $F^{01}_{nd}(X)$ on $X$, leads to a
c.e. pre-lattice $L_{01}^{nd}$.

\begin{eg}\label{ex:nondistr}
$L_{01}^{nd}$ is an e.i. non-distributive pre-lattice which is free bounded
on countably many generators.
\end{eg}

\section{Effectively inseparable pre-lattices and universality}
Montagna and Sorbi~\cite[Theorem~2.1]{Montagna-Sorbi:Universal} proved that
if $L$ is an e.i. pre-lattice and $R=\langle \omega, \leq_{R}\rangle$ is a
decidable pre-partial order then $\leq_{R} \preceq \leq_{L}$, but left open
(\cite[Remark~2.1]{Montagna-Sorbi:Universal}) whether such a pre-partial
order $\leq_{L}$ is universal. In the next section we show that this is so.

\subsection{E.i. pre-lattices vs. uniform finite precompleteness}
We first show that effective inseparability of the pair $([0]_{L}, [1]_{L})$
makes the ceer $\equiv_L$ (and thus $\le_{L}$, according to
Definition~\ref{defn:ei-preorders-and-other-things}) u.f.p.\,.

\begin{thm}\label{thm:from-ei-to-ufp}
Let $L$ be a c.e. pre-lattice. Then $L$ is e.i. if and only if the ceer
$\equiv_L$ is u.f.p.\,.
\end{thm}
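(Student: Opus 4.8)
The plan is to prove the two implications separately. The direction ``$\equiv_L$ u.f.p. $\Rightarrow L$ e.i.'' is immediate from what is already available: by Fact~\ref{fact:implications} a u.f.p.\ ceer is e.i.\ as a ceer, so every pair of distinct $\equiv_L$-classes is an e.i.\ pair; since $L$ is non-trivial we have $0\not\equiv_L 1$, so in particular the pair $([0]_L,[1]_L)$ is e.i., which is exactly the assertion that $L$ is an e.i.\ pre-lattice. All the work is in the converse, so assume $([0]_L,[1]_L)$ is e.i.\ and fix a total productive function $p$ for this pair (total productive functions being available by \cite[p.44]{Soare:Book}). I must produce a totalizer $k(D,e,x)$.

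The key idea is to reduce the construction of $k$ to the construction of finitely many $\{0,1\}$-valued \emph{indicator elements} and then to reassemble the answer with the lattice operations. Writing $D=\{d_0,\dots,d_{n-1}\}$ and $z=\phi_e(x)$, I would first define a single c.e.\ ``commitment'' event: search over all stages and all $i<n$ for a confirmation that $z\equiv_L d_i$, and let the \emph{committed index} be the $i$ of the first such confirmation found (with a fixed tie-break). Since $\equiv_L$ is c.e., this is a legitimate c.e.\ process, and whenever $z\equiv_L d_i$ for some $i$ (the only case in which the totalizer is required to be correct) a commitment occurs, at an index $j$ with $z\equiv_L d_j$. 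I then aim to build, uniformly, elements $c_0,\dots,c_{n-1}$ such that, once the committed index equals $j$, one has $c_j\equiv_L 1$ and $c_i\equiv_L 0$ for all $i\neq j$. Granting this, I set
\[
k(D,e,x)=\bigvee_{i<n}\bigl(c_i\wedge d_i\bigr),
\]
which is total (the $c_i$ are total, being values of $p$) and satisfies, whenever the committed index is $j$, $k\equiv_L (1\wedge d_j)\lor\bigvee_{i\neq j}(0\wedge d_i)\equiv_L d_j\equiv_L z$, as required. Everything is uniform in $D,e,x$; the simultaneous fixed points needed below are justified by the Case Functional Recursion Theorem as in Subsection~\ref{ssct:further}.

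The heart of the argument, and the main obstacle, is the construction of a single indicator by an ``escape'' use of $p$. For fixed $i$ I would, via the Recursion Theorem, obtain indices $u_i,v_i$ of c.e.\ sets I am about to enumerate and put $c_i=p(u_i,v_i)$, a number I may now refer to while enumerating $W_{u_i},W_{v_i}$. I always enumerate all of $[0]_L$ into $W_{u_i}$ and all of $[1]_L$ into $W_{v_i}$ (these are c.e.\ and disjoint since $0\not\equiv_L 1$, so $p(u_i,v_i)\downarrow$ is guaranteed). If the committed index becomes $i$, I additionally enumerate the whole class $[c_i]_L$ into $W_{u_i}$; if it becomes some $j\neq i$, I instead enumerate $[c_i]_L$ into $W_{v_i}$. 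At most one of these two actions is ever taken, so disjointness of $W_{u_i},W_{v_i}$ can fail only through $c_i$ itself. The forcing lemma is then a short contradiction: if $[c_i]_L$ is put into $W_{u_i}$ and yet $c_i\not\equiv_L 1$, then $[c_i]_L\cap[1]_L=\emptyset$, so $W_{u_i},W_{v_i}$ are disjoint c.e.\ sets with $[0]_L\subseteq W_{u_i}$ and $[1]_L\subseteq W_{v_i}$, whence $c_i=p(u_i,v_i)\notin W_{u_i}\cup W_{v_i}$, contradicting $c_i\in[c_i]_L\subseteq W_{u_i}$; hence $c_i\equiv_L 1$. Symmetrically, enumerating $[c_i]_L$ into $W_{v_i}$ forces $c_i\equiv_L 0$. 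The commitment mechanism is precisely what keeps this robust when several $d_i$ happen to be $\equiv_L$-equivalent (so that ``$z\equiv_L d_i$'' is confirmable for more than one $i$): only the first confirmation acts, so exactly one branch of each indicator ever fires and disjointness is preserved. I expect the delicate points to be the careful bookkeeping that only one branch per indicator is triggered and the verification that the reassembly formula $\bigvee_i(c_i\wedge d_i)$ returns a representative of the correct class even though distinct $d_i$ in $D$ may be incomparable or mutually equivalent.
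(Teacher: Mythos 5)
Your proposal is correct and follows essentially the same route as the paper's proof: for each $d\in D$ you build, via the Recursion Theorem and the productive function $p$ for $([0]_L,[1]_L)$, an indicator $c_d$ that is forced into $[1]_L$ exactly when $d$ is the first element of $D$ confirmed $\equiv_L$-equivalent to $\phi_e(x)$ and into $[0]_L$ otherwise, and you output $\bigvee_{d\in D}(c_d\wedge d)$, which is precisely the paper's totalizer. The only cosmetic difference is that you enumerate the whole class $[c_i]_L$ into the relevant set where the paper enumerates only the single element $c_{d_0,D,e,x}$; both yield the same contradiction with productivity.
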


\begin{proof}
If $\equiv_L$ is u.f.p. then by Fact~\ref{fact:implications} all pairs of
distinct equivalence classes are e.i.\,.

Suppose now that $L$ is e.i.\,. We need to define a computable function
$k(D,e,x)$ such that if $\phi_e(x)\downarrow \equiv_L d$ for some $d \in D$
then $k(D,e,x) \equiv_L \phi_e(x)$.

Let $p$ be a productive function for the pair $([0]_L, [1]_L)$. Let
\[
\{u_{d,D,e,x}, v_{d,D,e,x}: \text{ $D$ finite subset of $\omega$, }
d\in D, e,x \in \omega\}
\]
be a computable set of indices we control by the Recursion Theorem. For a
pair $(u_{d,D,e,x}, v_{d,D,e,x})$ in this set let $
c_{d,D,e,x}=p(u_{d,D,e,x},v_{d,D,e,x})$ and $a_{d,D,e,x}=d \wedge
c_{d,D,e,x}$. Define
\[
k(D,e,x)=\bigvee_{d \in D}a_{d,D,e,x}.
\]
We now specify how to computably enumerate $W_{u_{d,D,e,x}}$ and
$W_{v_{d,D,e,x}}$ for $d\in D$. We wait for $\phi_e(x)$ to converge to some
$y$ which is $\equiv_{L}$ to some element in $D$, and while waiting, we let
$W_{u_{d,D,e,x}}$ and $W_{v_{d,D,e,x}}$ enumerate $[0]_L$ and $[1]_L$,
respectively. If we wait forever then for all $d \in D$ we end up with
$W_{u_{d,D,e,x}}=[0]_L$ and $W_{v_{d,D,e,x}}=[1]_L$. If the wait terminates,
let $d_0\in D$ be the first seen so that $\phi_e(x)\equiv_{L} d_0$, and then
enumerate $c_{d_0,D,e,x}$ into $W_{u_{d_0,D,e,x}}$, while keeping on with
$W_{u_{d_0,D,e,x}}$ enumerating $[0]_L$ and with $W_{v_{d_0,D,e,x}}$
enumerating $[1]_L$: this ends up with  $W_{u_{d_0,D,e,x}}=[0]_L \cup
\{c_{d_0,D,e,x}\}$ and $W_{v_{d_0,D,e,x}}=[1]_L$, thus forcing $c_{d_0,D,e,x}
\in [1]_L$ and thus $a_{d_0,D,e,x}\equiv_{L} d_0$. For all $d \in D$ with $d
\ne d$, we let $W_{u_{d,D,e,x}}=[0]_L$ and $W_{v_{d,D,e,x}}=[1]_L \cup
\{c_{d,D,e,x}\}$: this forces $c_{d,D,e,x}\in [0]_L$ and thus
$a_{d,D,e,x}\equiv_{L} 0$ for each such $d$.

In conclusion, if $\phi_e(x)$ converges to a number that is
$\equiv_{L}$-equivalent to some $d \in D$ then, for the number $d_0$ for
which we see this for the first time, we have
\[
k(D,e,x)=\bigvee_{d \in D}a_{d,D,e,x} \equiv_L d_0,
\]
as desired.
\end{proof}

\subsection{E.i. pre-lattices are universal}
We now show that effective inseparability for a c.e. pre-lattice implies
universality with respect to the class of all c.e. pre-orders.

\begin{thm}\label{thm:ei-lattices-universal}
If $L=\langle \omega, \wedge, \vee, 0,1, \leq_{L}\rangle$ is an e.i.
pre-lattice then $\leq_{L}$ is universal.
\end{thm}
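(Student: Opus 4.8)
The plan is to reduce an arbitrary c.e.\ pre-order $R=\langle \omega,\leq_R\rangle$ to $\leq_L$ by building the reducing function $f$ via the Recursion Theorem, using the infinite computable list of controllable fixed points described in Subsection~\ref{ssct:further}. The whole construction takes place inside the interval $[0,1]_L$, and it exploits two features of an e.i.\ pre-lattice established above. First, by Theorem~\ref{thm:from-ei-to-ufp} the ceer $\equiv_L$ is u.f.p., so we have a totalizer $k$ that lets us manufacture total elements landing in prescribed finite families of $\equiv_L$-classes. Second, the productive function $p$ for the e.i.\ pair $([0]_L,[1]_L)$ gives, exactly as in the proof of Theorem~\ref{thm:from-ei-to-ufp}, a uniform way to take a fresh fixed point $c=p(u,v)$ (with $W_u\supseteq[0]_L$, $W_v\supseteq[1]_L$) and later force $c\equiv_L 0$ or $c\equiv_L 1$ at will by enumerating $c$ into the opposite side.

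First I would handle the positive requirements $x\leq_R y\Rightarrow f(x)\leq_L f(y)$ by a monotone join encoding. To each $m$ I attach a controllable token element, and I define $f(n)$ to be an effective realization of the join $\bigvee\{\text{token}_m : m\leq_R n\}$; since $\leq_R$ is transitive, $m\leq_R x\leq_R y$ forces $m\leq_R y$, so the index set of the join for $x$ is contained in that for $y$ and monotonicity is immediate from the lattice axioms. The token attached to $m$ is driven to $\equiv_L 1$ precisely when it is switched on, and the enumeration of $\leq_R$ dictates the switching, so that $f$ is computable. Because $n$ may have infinitely many $R$-predecessors, each $f(n)$ is genuinely an infinite join; this is realized as a single element through the Recursion Theorem together with the totalizer $k$, which collapses the growing finite approximations $\bigvee\{\text{token}_m : m\leq_R n\text{ seen by stage }s\}$ into one well-defined $\equiv_L$-class in the limit.

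For the negative requirements $x\not\leq_R y\Rightarrow f(x)\not\leq_L f(y)$ I would use effective inseparability to keep the relevant token genuinely above $f(y)$. Since $x\leq_R x$, the token for $x$ is a summand of $f(x)$, and the requirement reduces to ensuring $\text{token}_x\not\leq_L f(y)$ whenever $x\not\leq_R y$. The event $\text{token}_x\leq_L f(y)$ is c.e., and I attach to the pair $(x,y)$ a fixed point controlled by $p$ whose two c.e.\ sides are fed from $\{z : z\leq_L f(y)\}$ and from $\text{token}_x$; arguing as in Observation~\ref{obs:incomparability} and in Theorem~\ref{thm:from-ei-to-ufp}, if $x\not\leq_R y$ holds forever then productivity forces the two sides to remain separated, which is exactly the incomparability $\text{token}_x\,|_L\,f(y)$ inside $[0,1]_L$, whence $f(x)\not\leq_L f(y)$. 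The key point, and the reason distributivity is never needed, is that the elements in the range of $f$ are never arbitrary members of $L$ (whose $\leq_L$-behaviour would be out of our hands) but controllable fixed points whose mutual position we pin down directly by productivity.

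The main obstacle I expect is reconciling the two kinds of requirements simultaneously and infinitarily: a single $f(n)$ must satisfy infinitely many constraints revealed over time, and once committed its $\leq_L$-relations are fixed. The construction must therefore avoid ever fully committing $f(n)$, keeping its value adjustable as an infinite join of still-switchable tokens, and must verify that the switching decisions forced by the positive requirements never collide with the separations demanded by the negative ones. This consistency is guaranteed precisely because transitivity of $\leq_R$ makes the token index sets nested, and because u.f.p.\ (equivalently, by Theorem~\ref{thm:from-ei-to-ufp}, effective inseparability) supplies enough room to place each new incomparability without disturbing the joins already built. Verifying this consistency, and checking that the limit element $f(n)$ produced by the totalizer really is the join that the negative requirements diagonalize against, is the delicate part; everything else is routine recursion-theoretic bookkeeping.
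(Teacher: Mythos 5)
Your high-level instincts (use the u.f.p.\ totalizer from Theorem~\ref{thm:from-ei-to-ufp}, keep every value of $f$ a controllable fixed point, invoke effective inseparability against false inequalities) match the paper's, but two load-bearing steps do not work as described. First, the encoding $f(n)\equiv_L\bigvee\{\text{token}_m:m\leq_R n\}$ is genuinely an infinite join when $n$ has infinitely many $R$-predecessors, and the totalizer cannot realize it: $k(D,e,x)$ commits in advance to a \emph{finite} set $D$ of possible target classes and tracks a \emph{single} convergence of $\phi_e(x)$, so it can move an element once among finitely many prescribed outcomes, not follow an unbounded increasing sequence of approximations ``in the limit.'' The paper sidesteps this entirely: $f(n)$ only ever needs to position itself correctly relative to $f(j)$ for $j<n$, and the finitely many possible configurations (a pre-order on $\{j:j<n\}$ together with the lower and upper sets $X,Y$ of elements below and above $n$) form a finite poset $T_n$; one fixed point $x_\tau$ is allocated per node $\tau\in T_n$, each defined as $k(\bigvee_{i\in X}f(i),\bigwedge_{j\in Y}f(j),\cdot,e_\tau,0)$, and $f(n)$ is the root, whose class migrates along $T_n$ as $\leq_R$ is enumerated. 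Relations between $n$ and later elements $m>n$ are the responsibility of $f(m)$, not of $f(n)$, which is what makes every individual commitment finite.

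Second, your treatment of the negative requirements inverts the logic of effective inseparability. A productive function for $([0]_L,[1]_L)$ does not ``force two sides to remain separated'': $\leq_L$ is an arbitrary c.e.\ pre-order not under the construction's control, and nothing prevents it from enumerating $\text{token}_x\leq_L f(y)$ at some stage. What effective inseparability buys is a \emph{refutation}: one must show that if a false inequality $f(n)\leq_L f(m)$ ever appears, the fixed points not yet committed can be driven so as to derive something impossible. This is the paper's action $\diamondsuit$, which upon seeing such an inequality uses the still-uncommitted $\phi_{e_\rho}(0)$ for $n$, $m$, and all elements currently above $n$ or below $m$ to cascade $f(n)$ up to $\equiv_L 1$ and $f(m)$ down to $\equiv_L 0$, yielding $1\leq_L 0$ and contradicting non-triviality; hence the bad event never occurs. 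Your sketch attaches a separate diagonalizing fixed point to each pair $(x,y)$ and hopes productivity keeps $\text{token}_x$ and the cone below $f(y)$ apart, but the sets you feed the two sides are not disjoint c.e.\ supersets of $[0]_L$ and $[1]_L$, and even granting a correct setup you would only obtain one witness outside both sets, not a standing guarantee about the tokens already in play. Your closing paragraph correctly identifies that reconciling the positive and negative requirements is the crux, but that reconciliation is precisely the content of the paper's $T_n$-tree bookkeeping and Claim~\ref{claim:fund}, and it is not supplied by nestedness of the token index sets alone.
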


\begin{proof}
By Theorem~\ref{thm:from-ei-to-ufp} let $j(D,e,x)$ be a function witnessing
that $\equiv_{L}$ is u.f.p.\,.

\begin{claim}\label{claim:ufp}
There is a computable function $k(a,b,D,e,x)$ so that if $a\leq_{L} b$, then
$k(a,b,D,e,x)\in [a,b]_{\leq_L}$, and if $\phi_e(x)\equiv_L  d\in D$ for some
$d\in D$ then $k(a,b,D,e,x)\equiv_L (d\wedge b) \vee a$. In particular, if
$\phi_e(x)\equiv_L  d\in D$ for some $d\in D$ with $d\in [a,b]_{\leq_L}$,
then $k(a,b,D,e,x)\equiv_L  d$.
\end{claim}

\begin{proof}
Let $k(a,b,D,e,x)=(j(D,e,x)\wedge b)\vee a$. Then if $a\leq_{L} b$, then
$k(a,b,D,e,x)\in [a,b]_{\leq_L}$. If $\phi_e(x)\equiv_L  d\in D$, then
$k(a,b,D,e,x)\equiv_L (d\wedge b)\vee a$. Thus, if $d\in [a,b]_{\leq_L}$,
then $k(a,b,D,e,x)\equiv_L  d$.
\end{proof}

Since we know that $\equiv_L$ has this behavior with regard to $k$, we use a
``speedup'' of the enumeration of $\equiv_L$ so that this effect happens
immediately. That is, whenever at a stage $s$ we need to define, given
$a,b,D$, some computation $\phi_e(x)$ to converge to some $d\in D$ relying on
the fact that this entails $k(a,b,D,e,x)\equiv_L (d\wedge b)\vee a$, we
momentarily stop the construction until we in fact see $\equiv_L$ to give
this equivalence, and only after that we proceed with the construction: we
may therefore pretend that when we make $\phi_e(x)$ converge then at the same
stage we also see $k(a,b,D,e,x)\equiv_L (d\wedge b)\vee a$.

Let $R=\langle \omega, \leq_{R}\rangle$ be any c.e. pre-order on $\omega$. We
must construct a computable function $f:\omega \rightarrow \omega$ so that
$n\leq_R m$ if and only if $f(n)\leq_L f(m)$. We fix an infinite set $E$ of
indices that we control via the Recursion Theorem. The idea is then to define
the map $f$ reducing $\leq_R$ to $\leq_L$ inductively. Having defined $f(j)$
for each $j<n$, we define $f(n)$ as follows. We consider first the partial
order of the set $T^0_n$ of all possible pre-partial orders on the set
$\{j\mid j<n\}$ ordered by $O_1 \preceq^0_n O_2$ if $O_2$ extends $O_1$, that
is, if $i\leq_{O_1} j$ implies $i \leq_{O_2}j$ for each $i,j<n$. This defines
a finite partial order $(T_n^0,\preceq^0_n)$. Next we consider the partial
order $(T_n, \preceq_n)$ where
\begin{multline*}
T_n=\{(O,X,Y)\mid O\in T_n^0, X,Y\subseteq \{j\mid j< n\}\\ \text{ and }
a\leq_{O} b \text{ for every $a\in X$ and $b\in Y$} \\
\text{and if $i\leq_{O} a\in X$,
then $i\in X$, and if $j\geq_{O} b\in Y$ then $j\in Y$}\}.
\end{multline*}
In other words, $T_n$ is comprised of pre-partial orders of the set $\{j\mid
j<n\}$ with a chosen ``interval''. We say $(O_1,X_1,Y_1)\preceq_{n}
(O_2,X_2,Y_2)$ if $O_1\preceq^0_n O_2$ and $X_1\subseteq X_2$ and
$Y_1\subseteq Y_2$. In other words, we have refined both the order and the
interval. Once again, $(T_n,\preceq_n)$ is a finite partial order.

For each element $\tau=(O,X,Y)\in T_n$, we define an $x_{\tau}\in \omega$. We
begin by defining $x_\tau$ for the leaf $\upsilon$ in $T_n$. Note that the
leaf corresponds to a pre-partial order where all elements are
$O$-equivalent, $X=Y=\{j\mid j<n\}$. We define $x_\upsilon$ to be
\[
x_\upsilon= k(\bigvee_{i\in X}f(i),\bigwedge_{j\in Y}f(j),\{f(j)\mid
j<n\}
\cup \{0,1\},e_\upsilon,0)
\]
where $e_\upsilon\in E$ is distinct from any previously mentioned element of
$E$.

For every node $\tau\in T_{n}$, we define
\[
x_\tau=k(\bigvee_{i\in X}f(i),\bigwedge_{j\in Y}f(j), \{x_\sigma\mid
\tau\preceq_n \sigma\}
\cup \{0,1\}, e_\tau,0)
\]
where $e_\tau\in E$ is distinct from any previously mentioned element of $E$.

We fix notation that $\tau=(O_\tau,X_\tau,Y_\tau)$ and
$x_\tau=k(a_\tau,b_\tau,S_\tau,e_\tau,0)$. Let $\lambda$ be the root in
$T_n$, i.e. the partial order given by an anti-chain and $X=Y=\emptyset$.

We define $f(n)$ to be the element $x_\lambda$, where $\lambda$ is the root
in $T_n$ ($\lambda=\lambda(n)$ depends in fact on $n$). The idea is to use in
a construction in stages the indices $e_\tau$ for various $\tau$ to ensure
that: If the elements $\{j\mid j<n\}$ in $\leq_R$ have order type $O$, and
$X=\{j\mid j<n\wedge j\leq_R n\}$ and $Y=\{j\mid j<n\wedge j\geq_R n\}$ at
stage $s$, then $x_\lambda \equiv_{L} x_{(O,X,Y)}$. Since there are only
finitely many elements of $T_n$, we will only need to act at finitely many
stages to ensure this is true ($(O,X,Y)=\tau(n,s)$ depends in fact on $n$ and
$s$).

\medskip

We work with computable approximations $\{\leq_{R,s}: s \in \omega\}$,
$\{\leq_{L,s}: s \in \omega\}$ to $\leq_R$, $\leq_L$ respectively (and
consequent approximations $\{\equiv_{R,s}: s \in \omega\}$, $\{\equiv_{L,s}:
s \in \omega\}$ to $\equiv_R$, $\equiv_L$ respectively): we may assume that
these sequences are increasing sequences of pre-orders $\leq_{R,s}$,
$\leq_{L,s}$ with unions $R,S$ respectively, such that the predicates, in
$x,y,s$, $x \leq_{R,s} y$, $x \leq_{L,s} y$ are decidable and $\leq_{R,0},
\leq_{L,0}$ are the identity pre-orders on $\omega$. In the construction
below it is understood that at each stage $s$, when we monitor the c.e.
progress of the two c.e pre-orderings and of the corresponding equivalence
relations, we work with the corresponding approximations at that stage.

Recall that $f(n)=x_{\lambda(n)}$ where $\lambda(n)$ is the root in $T_n$.

\medskip

\emph{Stage $0$.} Do nothing.

\medskip

\emph{Stage $s+1$.}  Let $n$ be given, and suppose we have dealt already with
all $j<n$. We now deal with $n$. Recall that $f(n)=x_\lambda$. Let $O_1$
represent the order type of $R$ restricted to $\{j\mid j<n\}$ at stage $s$
and $O_2$ be the order type of $R$ restricted to $\{j\mid j<n\}$ at stage
$s+1$. Let $X_1$ be the set of elements in $\{j\mid j<n\}$ which are $\leq_R
n$ at stage $s$ and let $Y_1$ be the set of elements in $\{j\mid j<n\}$ which
are $\geq_R n$ at stage $s$. Let $X_2,Y_2$ be defined similarly at stage
$s+1$. Then $(O_1,X_1,Y_1)\preceq_n (O_2,X_2,Y_2)$.

If $(O_1,X_1,Y_1)\ne(O_2,X_2,Y_2)$ then make $\phi_{e_{(O_1,X_1,Y_1)}}(0)$
converge to equal $x_{(O_2,X_2,Y_2)}$. Note that this causes (by our speedup)
that $x_{(O_1,X_1,Y_1)}\equiv_L x_{(O_2,X_2,Y_2)}$ also at stage $s+1$. (We
will show by induction that then $f(n)=x_\lambda$ will become
$\equiv_{L}$-equivalent to $x_{(O_2,X_2,Y_2)}$, which will ensure that $f(n)
\leq_L f(j)$ for every $j\in Y_2$ and $f(n) \geq_L f(j)$ for every $j\in
X_2$.)

If $(O_1,X_1,Y_1)=(O_2,X_2,Y_2)$ then go to $n+1$.

\medskip

If ever we see, at any stage, $f(n)\leq_L f(m)$ when we have not yet seen
$n\leq_R m$, then we halt the entire construction and call action
$\diamondsuit$. Action $\diamondsuit$, if ever called, will cause a
contradiction, which will guarantee that this will never happen and action
$\diamondsuit$ will never in fact be called.

\subsection*{Action $\diamondsuit$}
We now describe action $\diamondsuit$. Suppose that at stage $s$ of the
construction, we see $f(n)\leq_L f(m)$ and we have not yet seen $n\leq_R m$.
In particular, at this stage $f(n)\equiv_L x_\tau$ and $f(m)\equiv_L
x_\sigma$ where $\tau=\tau(n,s)$ and $\sigma=\tau(m,s)$ are as given for $n$
and $m$ by Claim~\ref{claim:fund}, so that we have not as yet caused
$\phi_{e_\tau}(0)$ or $\phi_{e_{\sigma}}(0)$ to converge. Recall that
$a_\tau=\bigvee_{i\in X_\tau}f(i)$ and $b_\tau=\bigwedge_{j\in Y_\tau}f(j)$,
and similarly for $\sigma$.

Now, for every $z\in \{n\}\cup Y_\tau$, we perform the following action,
which we call $\text{One}(z)$: take $\rho=\tau(z,s)$ as given for $z$ again
by Claim~\ref{claim:fund} (in particular $x_\rho\equiv_L f(z)$ and
$\phi_{e_\rho}(0)\uparrow$ at this stage). Now, make $\phi_{e_{\rho}}(0)$
converge to equal $1$. This forces $f(z) \equiv_L b_\rho\vee a_\rho$ (by our
speedup this happens at $s$). In other words, we cause at $s$ $f(z)$ to
become as large as possible.

Also, for every $z\in \{m\}\cup X_\sigma$, we perform the following action,
which we call $\text{Zero}(z)$: take $\rho$ as given for $z$ again by
Claim~\ref{claim:fund} (in particular $x_\rho\equiv_L f(z)$ and
$\phi_{e_\rho}(0)\uparrow$ at this stage). Now, make $\phi_{e_{\rho}}(0)$
converge to equal $0$, which causes $f(z) \equiv_L a_\rho$ (by our speedup
this happens at $s$). In other words, we cause at $s$ $f(z)$ to become as
small as possible.

\subsection*{Verification}
We now verify that the above construction yields a function
$f:\omega\rightarrow \omega$ so that $n\leq_R m$ if and only if $f(n)\leq_L
f(m)$.

\begin{claim}\label{claim:fund}
Suppose that we have not called action $\diamondsuit$ at any stage $<s$, and
let $m$ be given. Let $\tau(m,s)=(O,X,Y) \in T_m$ be the triple defined by:
$O$ is the order type of $\{j\mid j<m\}$ in $R$ at stage $s$, $X=\{j\mid j<m
\text{ and }j\leq_R m\}$ at stage $s$, and $Y=\{j\mid j<m \text{ and }
j\geq_R m\}$ at stage $s$. Then (letting for simplicity $\tau=\tau(m,s)$)
$f(m) \equiv_L x_\tau$ at stage $s$ and $\phi_{e_{\tau}}(0)\uparrow$ at stage
$s$, and $f$ preserves the pre-partial order given by $R$ at stage $s$ on the
interval $\{j: j \leq m\}$.
\end{claim}

\begin{proof}
At stage $0$, we have $f(m)=x_\lambda$, so the claim holds for every $m$ at
stage $0$.

Assume by induction that the claim holds for every $m$ at stage $s$ and for
every $j <m$ at stage $s+1$.

Notice that we can act at stage $s+1$, as the construction has not been
stopped due to $\diamondsuit$. Let $\tau(m,s)=(O,X,Y)$ be the triple defined
as in the statement of the Claim, i.e. $O$ is the order type of $\{j\mid
j<m\}$ in $R$ at stage $s$, $X=\{j\mid j<m \text{ and }j\leq_R m\}$ at stage
$s$ and $Y=\{j\mid j<m \text{ and } j\geq_R m\}$ at stage $s$. By induction,
if $\tau=\tau(m,s)$ then at $s$ $f(m) \equiv_{L} x_{\tau}$. Let
$\sigma=\tau(m,s+1)$ be the triple defined analogously at stage $s+1$, so we
make $\phi_{e_{\tau}}(0)\downarrow =x_\sigma$. But then, by our speedup, at
$s+1$
\[
x_\tau=k(a_\tau,b_\tau,S_\tau,e_\tau,0)\equiv_L (x_\sigma\wedge b_\tau)\vee
a_\tau.
\]
Moreover, by inductive hypothesis at $s+1$ (on $f$ preserving at $s+1$ the
pre-order of $R$ on numbers smaller than $m$), $a_\tau \le_L b_\tau$ and
$a_\sigma \le_L b_\sigma$: this implies by property of function $k$ that
$a_\tau \le_L x_\tau \le_L b_\tau$ and $a_\sigma \le_L x_\sigma \le_L
b_\sigma$. Now, since $X_\tau\subseteq X_\sigma$ and $Y_\tau\subseteq
Y_\sigma$, we get (all the following $\equiv_L$-equivalences by our speedup
may be assumed to be seen at $s+1$)
\[
a_\tau \le_L a_\sigma \le_L b_\sigma \le_L b_{\tau},
\]
and so at stage $s+1$ we have that $f(m)\equiv_L  x_\tau\equiv_L  x_\sigma$:
indeed $x_{\sigma} \wedge b_{\tau}\equiv_{L}  x_{\sigma}$ and $x_{\sigma}
\lor a_{\tau}\equiv_{L} x_{\sigma}$. Further, there is no condition which
could have caused $\phi_{e_\sigma}$ to have converged before this stage, so
we still have $\phi_{e_\sigma}(0)\uparrow$ at stage $s+1$.

We finally show that $f$ preserves the pre-order given at stage $s+1$ by $R$
on $\{j: j \leq m\}$. By inductive hypothesis, the pre-order given at stage
$s+1$ by $R$ on $\{j: j < m\}$ is preserved. Also, at $s+1$, since
$f(m)\equiv_L x_{\sigma}$ we have that $a_\sigma\leq_L f(m) \leq_L b_\sigma$,
so all the inequalities which occur between $m$ and any $j<m$ in $R$ are
preserved in $\leq_L$ at $s+1$.
\end{proof}

It now remains to show that we never call action $\diamondsuit$. Then, Claim
\ref{claim:fund} shows that $f$ preserves the pre-order from $R$ and the fact
that we do not call action $\diamondsuit$ shows that we never have any more
inequalities holding in $\leq_L$ than in $\leq_R$. Thus $f$ is a reduction of
$\leq_R$ to $\leq_L$.

\begin{claim}
We never call action $\diamondsuit$.
\end{claim}

\begin{proof}
Suppose we call action $\diamondsuit$ at stage $s$ because we see $f(n)\leq_L
f(m)$ and we have not yet seen that $n\leq_R m$. By calling action
$\text{One}(z)$ for every $z\in \{n\}\cup Y_\tau$, we arrange that for each
of these elements $z$, $f(z)$ become (by our speedup this happens at $s$)
$\equiv_L$-equivalent to $b_\rho\vee a_\rho$, as in the construction (with
$\rho, \tau, \sigma$ given for $m,n,z$ at $s$ by Claim~\ref{claim:fund}),
$f(m)\equiv_{L} x_{\tau}$, $f(n)\equiv_{L} x_{\sigma}$, $f(z)\equiv_{L}
x_{\rho}$ and we have not as yet caused either of $\phi_{x_{\tau}}(0)$,
$\phi_{x_{\sigma}}(0)$, $\phi_{x_{\rho}}(0)$ to converge. But since at $s$
the order in $L$ is the image of the order in $R$ (since we have not as yet
called action $\diamondsuit$), we get that $f(z) \equiv_L b_\rho$, since
$b_\rho \geq_L a_\rho$. We now show that for every $z\in \{n\}\cup Y_\tau$,
$f(z)$ becomes $\equiv_L$-equivalent to $1$ (in the following $\equiv_L$ and
$\leq_R$ are understood to be approximated at stage $s$). Suppose towards a
contradiction that for some element in this set, the image under $f$ is not
$\equiv_L$-equivalent to $1$, and let $z$ be $\leq_{R}$-maximal among those
such that $f(z)$ is not $L$-equivalent to $1$. Subject to this restraint, we
further choose $z$ to be minimal (as a number in $\omega$). Now, if $y \in
Y_{\rho}$ (so that $y<z$ and thus $y<n$ as $z \leq n$ being $z\in Y_\tau \cup
\{n\}$) then $y \ge_{R} z \ge_{R} n$ (if $z=n$ then at $s$ $z \ge_R n$ by our
assumptions on the approximations to $\leq_R$) and thus $y \in Y_\tau$: by
$R$-maximality of $z$, we see that either $y\equiv_R z $ or $f(y)\equiv_L 1$.
In the former case, we have $f(y)\equiv_L f(z)$ (since at $s$ $f$ preserves
the pre-order) and that $z$ was not minimal in $\{n\}\cup Y_\tau$ with
$f(z)\not\equiv_L 1$. Thus, we always have the latter case, and we see that
for every $y \in Y_\rho$ we have $f(y)\equiv_L 1$. But $f(z) \equiv_L b_\rho$
which is the meet of the images of the members of $Y_\rho$, and thus
$\equiv_L 1$. Thus, $f(z) \equiv_L 1$.

The same proof shows that for every $z\in \{m\}\cup X_\sigma$, $f(z)$ becomes
$\equiv_L$-equivalent to $0$. Thus we have that $1 \equiv_L f(n) \leq_L f(m)
\equiv_L 0$ (taking in turn $z=n$ and $z=m$), which is a contradiction. Thus,
the action $\diamondsuit$ is never called.	
\end{proof}
This ends the proof.
\end{proof}

\begin{rem}
It should be noted that the proofs of Theorem~\ref{thm:from-ei-to-ufp} and
Theorem~\ref{thm:ei-lattices-universal} do not require distributivity in
their assumptions. On the other hand, for a c.e. pre-lattice distributivity
is not a consequence of being e.i.: Example~\ref{ex:nondistr} shows in fact
that there are e.i. non-distributive pre-lattices.

Notice however that the proofs of Theorem~\ref{thm:from-ei-to-ufp} and
Theorem~\ref{thm:ei-lattices-universal} make use of the existence of both
operations of $\wedge$ and $\lor$. The following observation shows that this
is indeed necessary. A bounded c.e. upper pre-semilattice $U=\langle \lor,
0,1, \leq_U \rangle$ is called \emph{effectively inseparable} (\emph{e.i})
if, again, the pair $([0]_U,[1]_U)$ is effectively inseparable.
\end{rem}

\begin{obs}\label{obs:not-for-semilattices}
There exists an e.i. upper pre-semilattice whose pre-ordering relation is not
universal.
\end{obs}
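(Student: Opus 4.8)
The plan is to exhibit the upper-semilattice analogue of the free e.i. distributive pre-lattice of Example~\ref{ex:eidistr}, and then to exploit the fact that a free bounded \emph{upper semilattice}, lacking meets, is order-theoretically too thin to be universal. First I would fix a disjoint effectively inseparable pair $(A_0,A_1)$ of c.e. sets whose union is co-infinite; this can be arranged from any e.i. pair $(C_0,C_1)$ by passing to $A_0=\{2x:x\in C_0\}$ and $A_1=\{2x:x\in C_1\}$, which is again e.i. via $x\mapsto 2x$ and leaves every odd number undecided. Let $F=\langle\omega,\lor,0,1\rangle$ be a computable copy of the free bounded upper semilattice on a computably listed set of generators $\{e_i:i\in\omega\}$: an element is coded by the finite set of generators of which it is the join, with $0$ coded by $\emptyset$, a separate top $1$, and $\lor$ interpreted as union (capped at $1$), so that $\lor$ is computable. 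Let $\theta$ be the c.e. congruence on $F$ generated by $\{(e_i,0):i\in A_0\}\cup\{(e_i,1):i\in A_1\}$ and set $U=\langle\omega,\lor,0,1,\leq_U\rangle$ where $x\leq_U y$ iff $x\lor y\rel{\theta}y$. Then $U$ is a c.e. bounded upper pre-semilattice, and it remains to check that $([0]_U,[1]_U)$ is e.i. and that $\leq_U$ is not universal.

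For effective inseparability I would argue exactly as in Example~\ref{ex:eidistr}: the generator map sending $e_i\mapsto 0$ for $i\in A_0$, $e_i\mapsto 1$ for $i\in A_1$, and $e_i\mapsto e_i$ otherwise extends by freeness to a homomorphism $g$ of $F$ onto the free bounded upper semilattice $F'$ on the undecided generators $\{e_i:i\notin A_0\cup A_1\}$, and one checks $\ker g=\theta$ by the First Isomorphism Theorem. In particular $0\not\equiv_U 1$ (so $U$ is non-trivial) and $e_i\equiv_U 0\iff i\in A_0$, $e_i\equiv_U 1\iff i\in A_1$. Hence $i\mapsto e_i$ is a $1$-reduction of the pair $(A_0,A_1)$ into $([0]_U,[1]_U)$, so by the closure property of e.i. pairs recalled before Definition~\ref{def:e.i.-pre-lattice} the pair $([0]_U,[1]_U)$ is e.i.; thus $U$ is an e.i. upper pre-semilattice.

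The heart of the argument is non-universality, and here the crux is that the quotient structure of $U$ is $F'$, the free bounded upper semilattice on the \emph{infinite} set $\{e_i:i\notin A_0\cup A_1\}$, whose order is (isomorphic to) the finite subsets of an infinite set ordered by inclusion, with a top element $1$ adjoined. The plan is to produce a single c.e. (indeed decidable) pre-order that fails to embed into $\leq_U$: let $S=\langle\omega,\leq_S\rangle$ be a computable presentation of the countable dense linear order without endpoints, and show $S\not\preceq\leq_U$. Suppose $f$ reduced $\leq_S$ to $\leq_U$. Then $q\mapsto[f(q)]_U$ is an order-isomorphism of $S$ onto a chain in the quotient of $U$, and since $S$ has no greatest or least element this chain omits the top $1$ and bottom $0$ of $F'$; hence each $[f(q)]_U$ equals $\bigvee S_q$ for a finite nonempty generator set $S_q$, with $q<_S q'$ forcing $S_q\subsetneq S_{q'}$. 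But then $q\mapsto|S_q|$ is a strictly increasing map from a dense order into $\mathbb{N}$: fixing $q_0<_S q_1$, each of the infinitely many points strictly between them is sent to a distinct integer in the finite interval $(|S_{q_0}|,|S_{q_1}|)$, a contradiction. So $S\not\preceq\leq_U$ and $\leq_U$ is not universal. The main obstacle throughout is reconciling computability of $\lor$ with the presence of ``pending'' generators that may still collapse to $0$ or to $1$; this is handled, exactly as in the distributive case, by representing elements as formal finite joins so that $\lor$ is literal union and all collapsing is deferred to the c.e. congruence $\theta$. What makes the semilattice case succeed where the theorem for lattices fails is precisely the absence of meets: the free bounded upper semilattice admits only ``discrete'' chains and so cannot accommodate a dense order, whereas the corresponding free distributive lattice of Example~\ref{ex:eidistr} is rich enough to be universal.
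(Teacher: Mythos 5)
Your proposal is correct and follows essentially the same route as the paper: both build the e.i.\ upper pre-semilattice by running the construction of Example~\ref{ex:eidistr} on the free bounded upper semilattice, and both derive non-universality from the fact that every element other than $1$ is the join of a unique finite set of generators. The only difference is the witnessing pre-order --- you embed the dense order $\mathbb{Q}$ and derive a contradiction from the strictly increasing cardinalities $|S_q|$, while the paper reduces the reversed order $\omega^{*}$ and notes there is no infinite descending chain; both are immediate consequences of the same structural observation.
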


\begin{proof}
The same construction as in Section~\ref{sssct:eidistr} but starting with the
free bounded upper pre-semi-lattice on $X$ leads to a c.e. bounded upper
pre-semilattice $U^-_{01}$ which is e.i. and as a bounded upper
pre-semilattice is free on countably many generators.

We claim that the c.e. order $\langle \omega, \leq^*\rangle$ where $x \leq^*
y$ if $x\ge y$ can not be reduced to $\leq_{U^-}$, thus showing that
$\leq_{U^-_{01}}$ is not universal. This is so because there is no infinite
descending chain in $U^-_{01}$, as every element other than 1 is the join of
a unique finite set of generators, which are atoms in $U^-_{01}$.
\end{proof}

\subsection{Doing without boundedness}
We have shown (Theorem~\ref{thm:from-ei-to-ufp}) that every e.i. pre-lattice
is u.f.p. and thus u.e.i. by Fact~\ref{fact:implications}. In this section we
reverse the last implication by showing that for c.e. pre-lattices
u.e.i.-ness, even without boundedness, is enough to have u.f.p.-ness.

\begin{lemma}\label{lem:ei-restriction}
Let $L=\langle \omega, \wedge, \vee, 0,1, \leq_{L}\rangle$ be a c.e.
pre-lattice, with $a,b$ such that $a <_{L} b$ and the pair $([a]_{L},
[b]_{L})$ is e.i.\,. Then there exists a productive function $p$ for this
pair such that if $[a]_{L} \subseteq W_{u}$, $[b]_{L} \subseteq W_{v}$ and
$W_{u}\cap W_{v}=\emptyset$ then $p(u,v)\in [a,b]_{\leq_L}$.
\end{lemma}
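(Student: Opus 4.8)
The plan is to take any total productive function $q$ for the pair $([a]_L,[b]_L)$ and to post-compose it with the computable ``clamping'' map $z \mapsto (a \vee z)\wedge b$, whose image always lies in $[a,b]_{\leq_L}$; the real work is to pre-process the given indices $u,v$ so that clamping the output does not spoil productivity. This is exactly the device already used in Observation~\ref{obs:incomparability}, now aimed \emph{inside} the interval rather than at an incomparable element.

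First I would fix a total productive function $q$ for $([a]_L,[b]_L)$, available since every e.i.\ pair of c.e.\ sets admits a total productive function. Next, given $u,v$, I would use the s-m-n theorem to compute indices $u',v'$ for the c.e.\ sets
\[
W_{u'} = \{z : (a \vee z) \wedge b \in W_u\}, \qquad W_{v'} = \{z : (a \vee z) \wedge b \in W_v\};
\]
these are c.e.\ uniformly in $u,v$, being preimages of c.e.\ sets under the computable map $z \mapsto (a \vee z)\wedge b$. I would then define $p(u,v) = (a \vee q(u',v'))\wedge b$.

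The verification has two halves. For interval containment, set $c = q(u',v')$; since $a \leq_L b$ we have $a \equiv_L a \wedge b \leq_L (a \vee c)\wedge b \leq_L b$ by monotonicity of $\wedge$, so $p(u,v) \in [a,b]_{\leq_L}$ unconditionally. For productivity, assume $[a]_L \subseteq W_u$, $[b]_L \subseteq W_v$, and $W_u \cap W_v = \emptyset$. Using $a \leq_L b$ and that $\equiv_L$ is a congruence, $z \equiv_L a$ gives $(a \vee z)\wedge b \equiv_L a \wedge b \equiv_L a$, and $z \equiv_L b$ gives $(a \vee z)\wedge b \equiv_L b \wedge b \equiv_L b$; hence $[a]_L \subseteq W_{u'}$ and $[b]_L \subseteq W_{v'}$, while disjointness of $W_u,W_v$ immediately forces $W_{u'}\cap W_{v'} = \emptyset$. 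Therefore $q(u',v')\downarrow = c \notin W_{u'}\cup W_{v'}$. Finally, were $p(u,v) = (a \vee c)\wedge b$ in $W_u$, then $c \in W_{u'}$ by definition of $W_{u'}$, a contradiction; symmetrically $p(u,v)\notin W_v$. Thus $p(u,v)\notin W_u\cup W_v$, as needed.

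The only step requiring genuine care is confirming that the pullback sets $W_{u'},W_{v'}$ correctly absorb $[a]_L,[b]_L$ and stay disjoint, and this is precisely where $a \leq_L b$ and the congruence property are used; convergence of $p$, uniformity via s-m-n, and interval containment are all routine. I do not expect a real obstacle here, as the argument is a direct adaptation of the clamping construction in Observation~\ref{obs:incomparability}.
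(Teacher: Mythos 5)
Your proof is correct, and it reaches the same final formula $p(u,v)=(a\vee q(u',v'))\wedge b$ as the paper, but by a genuinely different route in how the auxiliary indices $u',v'$ are obtained. The paper builds $W_{u'},W_{v'}$ self-referentially via the Recursion Theorem: they enumerate $[a]_L$ and $[b]_L$ and, watching whether the clamped element $j=(a\vee q(u',v'))\wedge b$ ever enters $W_u$ or $W_v$, add $q(u',v')$ to the appropriate set so as to \emph{force} a contradiction with disjointness; the verification is a short reductio. You instead define $W_{u'}=g^{-1}(W_u)$ and $W_{v'}=g^{-1}(W_v)$ directly as preimages under the computable clamping map $g(z)=(a\vee z)\wedge b$, check that these absorb $[a]_L$ and $[b]_L$ (using $a\leq_L b$ and the congruence property) and remain disjoint, apply $q$ to get $c\notin W_{u'}\cup W_{v'}$, and push forward: $g(c)\in W_u$ would put $c\in W_{u'}$. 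This avoids the Recursion Theorem entirely and needs only s-m-n, which is arguably cleaner; it is also just as uniform in $a,b,u,v$ (with $q$ total, $p$ is total), which is what the paper later needs in Theorem~\ref{thm:from-uei-to}. What the paper's self-referential device buys is a template that recurs elsewhere (e.g.\ in Theorem~\ref{thm:from-ei-to-ufp}), but for this particular lemma your preimage argument is a complete and somewhat more economical substitute.
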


\begin{proof}
Let $q$ be a productive function for the pair $([a]_{L}, [b]_{L})$. We use
two indices $u',v'$ (uniformly depending on $u,v$ respectively) which we
control by the Recursion Theorem. We program enumerations of $W_{u'}$ and
$W_{v'}$ as follows. Let $j=(a \lor q(u',v')) \wedge b$. Then $W_{u'}$ and
$W_{v'}$ enumerate $[a]_{L}$ and $[b]_{L}$ respectively: if we see $j\in
W_{u}$ then we also enumerate $q(u',v')$ into $W_{u'}$; if we see $j\in
W_{v}$ then we also enumerate $q(u',v')$ into $W_{v'}$. In other words,
\begin{align*}
W_{u'}&=
\begin{cases}
[a]_L, &\text{if $j \notin W_u$},\\
[a]_L \cup \{q(u'v')\}, &\text{otherwise},
\end{cases}
\\
W_{v'}&=
\begin{cases}
[b]_L, &\text{if $j \notin W_v$},\\
[b]_L \cup \{q(u'v')\}, &\text{otherwise}.
\end{cases}
\end{align*}

Assume now that $[a]_{L} \subseteq W_{u}$ and $[b]_{L} \subseteq W_{v}$ and
$W_{u}\cap W_{v}=\emptyset$. Then $j \notin W_{u} \cup W_{v}$: if for
instance $j \in W_{u}$ then we have forced $q(u',v') \in [b]_{L}$ giving
$j\in [b]_{L}\subseteq W_{v}$, thus $W_{u} \cap W_{v}\ne \emptyset$,
contradiction. Thus, $p(u,v)=j$ is our desired productive function.
\end{proof}

\begin{lemma}\label{lem:restriction}
Let $L=\langle \omega, \wedge, \vee, 0,1, \leq_{L}\rangle$ be a c.e.
pre-lattice, with $a,b$ such that $a <_{L} b$ and $([a]_{L}, [b]_{L})$ is
e.i.\,. Then the interval $[a,b]_{L}$ is computably isomorphic to an e.i.
pre-lattice.
\end{lemma}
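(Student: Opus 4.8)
The plan is to restrict $L$ to the interval, observe that this restriction is already a c.e.\ bounded pre-lattice (on a c.e.\ universe) whose designated bottom/top pair is e.i., and then transport it onto $\omega$ via Remark~\ref{rem:more-general}, taking care that effective inseparability survives the transport. So first I would set $U=[a,b]_L$ and form $M=\langle U,\wedge,\vee,a,b,\leq_L\rangle$. The elementary point is that $U$ is closed under both operations: if $a\leq_L x,y\leq_L b$ then $a\leq_L x\wedge y\leq_L x\leq_L b$ and $a\leq_L x\leq_L x\vee y\leq_L b$, so $x\wedge y,\,x\vee y\in U$. Hence $M$ is a c.e.\ bounded pre-lattice on the c.e.\ universe $U$, whose quotient is the sublattice interval with bottom $[a]_L$ and top $[b]_L$; by hypothesis this designated pair is e.i.

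Next, since $[a]_L$ is one half of an e.i.\ pair of c.e.\ sets it is non-computable, hence infinite, so $U\supseteq[a]_L$ is infinite. Thus there is a computable bijection $h\colon\omega\to U$ (a computable injective enumeration of $U$), for which $h^{-1}$ is computable on $U$: to evaluate $h^{-1}(y)$ for $y\in U$ one searches for $x$ with $h(x)=y$, and this search halts. By Remark~\ref{rem:more-general}, $h$ induces a c.e.\ bounded pre-lattice $L'=\langle\omega,\wedge',\vee',0',1',\leq_{L'}\rangle$ with $x\leq_{L'}y$ iff $h(x)\leq_L h(y)$ and $0'=h^{-1}(a)$, $1'=h^{-1}(b)$, and $h^{-1}$ is a computable isomorphism (Definition~\ref{defn:isomorphism}) of $M$ onto $L'$. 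The bottom and top classes of $L'$ are exactly $h^{-1}([a]_L)$ and $h^{-1}([b]_L)$, so it remains only to verify that this pair is e.i.

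This last step is where the real work lies, and I expect it to be the main obstacle. The difficulty is that a productive function for $([a]_L,[b]_L)$ may a priori return values anywhere in $\omega$, whereas $h^{-1}$ can only be applied to values lying in $U$. This is precisely what Lemma~\ref{lem:ei-restriction} is designed to overcome: I would take a productive function $p$ for $([a]_L,[b]_L)$ whose outputs lie in $[a,b]_L=U$. Given indices $u,v$, compute uniformly (by s-m-n) c.e.\ indices $c_0,c_1$ for $h(W_u)$ and $h(W_v)$. If $h^{-1}([a]_L)\subseteq W_u$, $h^{-1}([b]_L)\subseteq W_v$ and $W_u\cap W_v=\emptyset$, then $[a]_L=h(h^{-1}([a]_L))\subseteq h(W_u)$ and $[b]_L\subseteq h(W_v)$, while injectivity of $h$ gives $h(W_u)\cap h(W_v)=h(W_u\cap W_v)=\emptyset$. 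Hence $p(c_0,c_1)\downarrow\in U\setminus(h(W_u)\cup h(W_v))$, so $x=h^{-1}(p(c_0,c_1))$ is defined, and again by injectivity $x\notin W_u\cup W_v$ (if $x\in W_u$ then $h(x)=p(c_0,c_1)\in h(W_u)$, a contradiction, and likewise for $W_v$).

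Thus $(u,v)\mapsto h^{-1}(p(c_0,c_1))$ is a productive function for $(h^{-1}([a]_L),h^{-1}([b]_L))$, so this pair is e.i.\ and $L'$ is an e.i.\ pre-lattice. Since $h^{-1}$ is a computable isomorphism of the interval $[a,b]_L$ (viewed as the pre-lattice $M$) onto $L'$, this proves that $[a,b]_L$ is computably isomorphic to an e.i.\ pre-lattice. The remaining verifications (closure of $U$, existence of $h$, and that $h^{-1}$ realizes the isomorphism in the sense of Definition~\ref{defn:isomorphism}) are routine, the only genuine content being the range control provided by Lemma~\ref{lem:ei-restriction}.
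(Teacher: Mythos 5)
Your proposal is correct and follows essentially the same route as the paper: transport the interval onto $\omega$ via a computable bijection $h$ as in Remark~\ref{rem:more-general}, and then use the range-controlled productive function from Lemma~\ref{lem:ei-restriction} to push effective inseparability through $h^{-1}$. The only cosmetic difference is your justification that the interval is infinite (non-computability of $[a]_L$ rather than infinitude of the equivalence classes), which is equally valid.
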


\begin{proof}
We first observe that the interval $[a,b]_{L}$ is infinite, as the
equivalence classes of $a$ and $b$ are infinite. We argue as in
Remark~\ref{rem:more-general}. Let $h: \omega \longrightarrow [a,b]_{L} $ be
a computable bijection with $h(0)=a$ and $h(1)=b$. This induces a c.e.
pre-lattice $L'=\langle \omega, \wedge, \vee, 0,1, \leq_{L'}\rangle$ where
$x\le_{L'} y$ if and only if $h(x) \le_{L} h(y)$. By
Lemma~\ref{lem:ei-restriction} the equivalence classes $[0]_{L'}, [1]_{L'}$
form an e.i. pair: if $[0]_{L'}\subseteq W_{u}$ and $[1]_{L'}\subseteq
W_{v}$, and $W_{u} \cap W_{v}$ are disjoint, then $h[W_{u}]$ and $h[W_{v}]$
are disjoint supersets of $[a]_{L}, [b]_{L}$ respectively, so that by the
lemma (which provides a productive function which always lands in $[a,b]_{L}$
when applied to indices of disjoint c.e. supersets of $[a]_{L}, [b]_{L}$) we
can effectively find $y\in [a,b]_{L}$ so that $h^{-1}(y)$ is defined but $y
\notin h[W_{u}] \cup h[W_{v}]$, and thus $h^{-1}(y) \notin W_{u}\cup W_{v}$.
\end{proof}

Theorem~\ref{thm:from-uei-to} and Observation~\ref{obs:open} below answer
questions raised in private communication by Shavrukov.

\begin{thm}\label{thm:from-uei-to}
A c.e. pre-lattice is u.e.i. if and only if it is u.f.p.\,.
\end{thm}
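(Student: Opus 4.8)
The plan is to prove the nontrivial direction, that u.e.i. implies u.f.p.; the converse implication $\textrm{u.f.p.}\Rightarrow\textrm{u.e.i.}$ holds for all ceers by Fact~\ref{fact:implications}. The argument of Theorem~\ref{thm:from-ei-to-ufp} already builds a totalizer from a productive function for $([0]_L,[1]_L)$, but it relies on $0,1$ being genuine bounds, since it uses the identities $d\wedge 1\equiv_L d$, $d\wedge 0\equiv_L 0$ and $d\vee 0\equiv_L d$. The idea is to recover these identities relative to each finite set $D$ by working inside the interval $[\bigwedge D,\bigvee D]_{\leq_L}$, using u.e.i. to supply the required productive function uniformly.

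First I would fix, by Remark~\ref{rem:uniformity-for-uei}, a total uniform productive function $\chi(x,y,u,v)$ witnessing u.e.i., so that $\chi(x,y,\cdot,\cdot)$ is productive for $([x]_L,[y]_L)$ whenever $x\not\equiv_L y$. Given $D,e,x$, set $a=\bigwedge_{d\in D}d$ and $b=\bigvee_{d\in D}d$; both are computable from the canonical index of $D$, and since $a\leq_L b$ always, $a\not\equiv_L b$ holds exactly when $a<_L b$. Applying the recipe of Lemma~\ref{lem:ei-restriction} (uniformly in $a,b$) to $\chi(a,b,\cdot,\cdot)$ yields a function $\hat\chi(a,b,\cdot,\cdot)$ whose values always lie in $[a,b]_{\leq_L}$, and which, when $a<_L b$, is a productive function for $([a]_L,[b]_L)$.

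Next I would imitate the construction of Theorem~\ref{thm:from-ei-to-ufp} inside this interval. For each $d\in D$ reserve indices $u_{d},v_{d}$ controlled by the Recursion Theorem, put $c_{d}=\hat\chi(a,b,u_{d},v_{d})\in[a,b]_{\leq_L}$, and define
\[
k(D,e,x)=\bigvee_{d\in D}(d\wedge c_{d}).
\]
The enumerations are as before: while waiting for $\phi_e(x)$ to converge to some number $\equiv_L$-equivalent to an element of $D$, let $W_{u_d}=[a]_L$ and $W_{v_d}=[b]_L$; once this happens, with $d_0$ the first witness found, enumerate $c_{d_0}$ into $W_{u_{d_0}}$ and enumerate $c_d$ into $W_{v_d}$ for every $d\neq d_0$. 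When $a<_L b$, productivity of $\hat\chi(a,b,\cdot,\cdot)$ forces $c_{d_0}\equiv_L b$ and $c_d\equiv_L a$ for $d\neq d_0$; since $a\leq_L d\leq_L b$ for all $d\in D$, this gives $d_0\wedge c_{d_0}\equiv_L d_0$ and $d\wedge c_d\equiv_L a$, whence $k(D,e,x)\equiv_L d_0\vee a\equiv_L d_0\equiv_L\phi_e(x)$.

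The hard part will be the degenerate case $a\equiv_L b$, which cannot be detected effectively and in which $\chi$ offers no productivity guarantee. Here every $d\in D$ satisfies $a\equiv_L d\equiv_L b$, and the point of forcing the outputs of $\hat\chi$ into the interval is precisely that $c_d\in[a,b]_{\leq_L}=[a]_L$, so that $d\wedge c_d\equiv_L a$ for every $d$ and hence $k(D,e,x)\equiv_L a\equiv_L\phi_e(x)$ as soon as $\phi_e(x)$ converges to a number $\equiv_L$-equivalent to an element of $D$. Since the entire procedure is uniform in $D,e,x$, the function $k$ is computable and is a totalizer, so $\equiv_L$ is u.f.p. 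Note that $0$ and $1$ play no role, which is what lets the argument go through without assuming boundedness.
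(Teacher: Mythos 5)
Your proof is correct and takes essentially the same route as the paper's: both use Fact~\ref{fact:implications} for the easy implication, and for the hard one both localize to the interval $[\bigwedge D,\bigvee D]_{\leq_L}$, obtain from u.e.i.\ (via Remark~\ref{rem:uniformity-for-uei} and the uniform version of Lemma~\ref{lem:ei-restriction}) a productive function whose values are trapped in that interval, and then run the construction of Theorem~\ref{thm:from-ei-to-ufp} with $\bigwedge D,\bigvee D$ in place of $0,1$, handling the degenerate case $\bigwedge D\equiv_L\bigvee D$ exactly as the paper does. The only difference is presentational: you inline the Theorem~\ref{thm:from-ei-to-ufp} construction directly inside the interval, whereas the paper first passes through the computable isomorphism of Lemma~\ref{lem:restriction} to a bounded e.i.\ pre-lattice $L_{a_D,b_D}$, invokes Theorem~\ref{thm:from-ei-to-ufp} as a black box there, and pulls the totalizer back.
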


\begin{proof}
The implication $\Leftarrow$ follows from Fact~\ref{fact:implications}. For
the other implication, let $L=\langle \omega, \wedge, \lor, 0,1, \leq_L
\rangle$ be a c.e. pre-lattice which is u.e.i.\,. By
Remark~\ref{rem:uniformity-for-uei} from any pair $a,b$ we can uniformly find
an index of a total computable function $q$ which is a productive function
for the pair $([a]_L,[b]_L)$ if the two equivalence classes are distinct. A
close look at Lemmata~\ref{lem:ei-restriction}~and~\ref{lem:restriction}
shows that their proofs are uniform: namely, (Lemma~\ref{lem:ei-restriction})
if $L$ is u.e.i then from any pair $a,b$ we can uniformly find a computable
function $q_{a,b}$ (use totality of $q$) which has the property therein
stated if $a<_L b$; and thus (Lemma~\ref{lem:restriction}) from any pair
$a,b$ we can uniformly go to an index of a partial computable function
$h_{a,b}$, partial computable binary functions $\wedge_{a,b}, \lor_{a,b}$, a
c.e. relation $\leq_{a,b}$, and a computable function $p_{a,b}$ such that if
$a \nleq_L b$ then $h_{a,b}$ never converges; if $a<_L b$ then
$L_{a,b}=\langle \omega, \wedge_{a,b}, \lor_{a,b}, 0,1, \leq_{a,b}\rangle$ is
an e.i. pre-lattice such that $h_{a,b}: [a,b]_L \longrightarrow \omega$ is a
bijection giving an order-theoretic isomorphism of the sublattice of $L$
having universe $[a,b]_L$ with $L_{a,b}$, and $p_{a,b}$ is a productive
function for $([0]_{L_{a,b}},[1]_{L_{a,b}})$; if $a \equiv_L b$, then
$h_{a,b}$ is a bijection between $[a]_L$ and $\omega$. It follows that if $a
\leq_L b$ then $h_{a,b}$ is a bijection between $[a,b]_L$ and $\omega$, and
thus from the canonical index of a finite $D \subseteq [a,b]_L$ we can
effectively find the canonical index of the image $h_{a,b}[D]$. Finally, by
the proof of Theorem~\ref{thm:from-ei-to-ufp} (which is uniform) from $a,b$,
using totality of $p_{a,b}$, we can uniformly find an index of a total
computable function $k_{a,b}(D,e,x)$ which witnesses that $L_{a,b}$ is
u.f.p.\, if $a<_L b$.

Given now any finite set $D$ let $a_D=\bigwedge D$ and $b_D=\bigvee D$ where
the meet and the join are taken in $L$: notice that $a_D \leq_L b_D$, thus
$h_{a_D,b_D}$ is a bijection between $[a_D, b_D]_L$ and $\omega$; and let $f$
be a computable function such that $\phi_{f(e,a,b)}=h_{a_D,b_D}\circ \phi_e$.
But then it is easy to see that the computable function
\[
k(D, e,x)=h^{-1}_{a_D,b_D}(k_{a_D,b_D}(h_{a_D,b_D}[D],f(e,a_D,b_D),x))
\]
witnesses that $L$ is u.f.p.\,: if $a_D<_L b_D$ this follows from the fact
that $k_{a_D,b_D}$ witnesses that the e.i. pre-lattice $L_{a_D,b_D}$ is
u.f.p.\,; if $a_D\equiv_L b_D$ (when $L_{a_D,b_D}$ collapses to one point)
this follows from the fact that $[D]_L=[a_D]_L$, and in this case
$h_{a_D,b_D}^{-1}$ always lands in $[a_D]_L$.
\end{proof}

All the examples seen so far of u.e.i. (or, equivalently, u.f.p.) c.e.
pre-lattices are in fact e.i. pre-lattices, which by the very definition, are
bounded. The following observation answers the natural question of whether
u.e.i.-ness entails boundedness.

\begin{obs}\label{obs:open}
There exist c.e. pre-lattices which are u.e.i. but not bounded.
\end{obs}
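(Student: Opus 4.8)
The plan is to build the required pre-lattice by hand as a quotient of a computable copy of the \emph{free} distributive lattice on countably many generators, forcing the induced ceer to be precomplete (so that u.e.i.\ comes for free by Fact~\ref{fact:implications}) while keeping the quotient unbounded. Concretely, fix a computable presentation $F=\langle \omega,\wedge,\vee\rangle$ of the free distributive lattice on a computably listed set of generators $\{g_i:i\in\omega\}$; being free on infinitely many generators, $F$ has decidable order, computable operations, and neither a greatest nor a least element. I would then enumerate a c.e.\ congruence $\beta$ on $F$ and take $M=\langle \omega,\wedge,\vee,\le_M\rangle$ with $x\le_M y\iff x\wedge y\mathrel{\beta} x$, so that $\equiv_M=\beta$ and $M$ is a c.e.\ pre-lattice whose quotient is $F_{/\beta}$; by Remark~\ref{rem:more-general} it is harmless that we build on $F$ rather than literally on $\omega$.

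To make $\beta$ precomplete I would split the generators into two infinite computable halves: a family of \emph{slots} $\{s_{e,x}:e,x\in\omega\}$, chosen injectively and controlled in advance by the Recursion Theorem, and an infinite \emph{reserve} $\{r_i:i\in\omega\}$ disjoint from all slots. Setting $c(e,x)=s_{e,x}$, the enumeration of $\beta$ merely waits for $\phi_e(x)$ to converge and, when it does with value $v$, throws the pair $(s_{e,x},v)$ into $\beta$, closing off under the lattice operations and transitivity. Since each $s_{e,x}$ is a fixed term, $c$ is total, and whenever $\phi_e(x)\!\downarrow$ we have $c(e,x)\mathrel{\beta}\phi_e(x)$; thus $\beta$ is precomplete, hence u.f.p., u.e.i.\ and e.i.\ by Fact~\ref{fact:implications}. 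This is the same ``slot $+$ Recursion Theorem'' device already used to encode effective inseparability in Example~\ref{ex:eidistr}, now deployed to obtain the stronger completion property rather than merely inseparability of a single pair.

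The \textbf{main obstacle}, and the only place the construction needs real care, is to guarantee that forcing precompleteness does not secretly create a top or a bottom, i.e.\ that $F_{/\beta}$ stays unbounded. The difficulty is that we control only the \emph{left} coordinate of the enumerated pairs (the slots), whereas the values $v=\phi_e(x)$ are adversarial terms that may mention the reserve generators, so one must check that the congruence generated by the slot-pairs never derives a relation of the form $r_i\wedge r_j\mathrel{\beta} r_i$ for distinct $i,j$, nor any common upper or lower bound for the reserve family $\{r_i\}$. I would handle this by showing that $\beta$ is the kernel of the idempotent substitution endomorphism of $F$ that replaces each slot $s_{e,x}$ by a normal form of its assigned value and fixes the reserve, so that the restriction of $\beta$ to the sublattice generated by $\{r_i\}$ is trivial; the delicate sub-point is the well-foundedness of this substitution in the presence of possible ``cyclic'' outputs, which one defeats by processing slot-pairs in order of enumeration and resolving any cycle to a fixed reserve-free representative. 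Granting this, $\{[r_i]_M\}$ is an infinite family of pairwise incomparable elements admitting neither an upper bound nor a lower bound in $F_{/\beta}$ (a finite term cannot dominate, nor lie below, infinitely many independent generators), so the quotient has no greatest and no least element. Hence $M$ is a c.e.\ pre-lattice that is u.e.i.\ but not bounded, as required.
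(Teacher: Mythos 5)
Your strategy of forcing full precompleteness on a quotient of the free distributive lattice is exactly where the construction breaks, and the break is not a repairable gap in the write-up: the congruence you describe provably produces a \emph{bounded} quotient. Since the slot assignment $(e,x)\mapsto s_{e,x}$ and the coding of lattice terms are fixed computable data, the Recursion Theorem (in the infinite-list form recalled in Section~\ref{ssct:further}) yields a computable sequence of indices $(e_n)_{n\in\omega}$ such that $\phi_{e_n}(0)$ converges to the code of the term $r_n\vee s_{p_n}\vee s_{e_{n+1},0}$, where $(p_n)_{n\in\omega}$ computably enumerates all pairs $(e,x)$. Writing $a_n=s_{e_n,0}$, your $\beta$ must contain every pair $(a_n,\; r_n\vee s_{p_n}\vee a_{n+1})$, so $[a_0]\geq[a_1]\geq\cdots$ and $[a_0]\geq[a_n]\geq [r_n]$ and $[a_0]\geq [s_{p_n}]$ for every $n$. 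Every generator of $F$ occurs as some $r_n$ or some $s_{p_n}$, and every element of $F_{/\beta}$ is a join of meets of generators, so $[a_0]$ is a greatest element of $F_{/\beta}$; the dual trick with $\wedge$ manufactures a least element. In particular your claims that $\beta$ is the kernel of a substitution fixing the reserves and that $\{[r_i]\}$ has no upper bound are false: any homomorphism fixing the reserves and equalizing the generating pairs of $\beta$ would have to send $a_0$ to a single finite term of $F$ lying above all the $r_n$, and no such term exists. The failure is intrinsic to demanding precompleteness: the totalizing function must agree with \emph{every} convergent $\phi_e(x)$ wherever its value lands, and an infinite ascending chain of slots, each absorbing one more generator, is then unavoidable.

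The paper avoids this entirely by not aiming for precompleteness. It takes $M=\bigoplus_{i\in\omega}L$ for $L$ any e.i.\ pre-lattice (e.g.\ the one of Example~\ref{ex:eidistr}); $M$ is visibly unbounded above, and u.e.i.-ness is obtained by showing that the direct sum of two u.e.i.\ c.e.\ pre-lattices is u.e.i., uniformly, via totalizers witnessing u.f.p.\ (using Theorem~\ref{thm:from-uei-to}). The point your approach misses is that u.f.p.\ only requires $k(D,e,x)\equiv_L\phi_e(x)$ when the value of $\phi_e(x)$ falls into the class of some $d$ in the \emph{given finite set} $D$, so the witness can be confined to the finitely many coordinates occupied by $D$; this locality is exactly what an unbounded structure can support and what full precompleteness destroys. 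If you want to keep a hands-on free-lattice construction, you should aim directly for a totalizer in the sense of Definition~\ref{def:ufp-etc} rather than a precompleteness function.
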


\begin{proof}
Fix $L$ to be any e.i. pre-lattice. Let $M$ be $\oplus_{i\in \omega} L$, that
is the direct sum of $\omega$ copies of $L$: $M$ has universe the set of
elements in $L^{\omega}$ with only finitely many non-$0$ entries, and the
operations and pre-ordering relation of $M$ are component-wise; clearly this
universe can be coded as $\omega$. $M$ is not bounded from above, though it
is not hard to show that $M$ is u.e.i. In fact, we claim that for any pair of
u.e.i. c.e. pre-lattices $A$ and $B$, the direct sum $A\oplus B$ is a u.e.i.
c.e. pre-lattice, and an index for a uniform productive function for $A\oplus
B$ can be found uniformly from indices for uniform productive functions for
$A$ and for $B$. Before proving the claim, let us conclude  the argument to
show that $M$ is u.e.i.\,: since every pair of elements from $M$ appear in a
finite sum of copies of $L$, their $\equiv_M$-equivalence classes are e.i.,
and it is uniform to find the index witnessing this, so $M$ is u.e.i. If we
want an example which is neither bounded above nor below, consider $M\oplus
M^\ast$, where $M^\ast$ is the same as $M$ but with the order reversed.

We are only left to show the claim, that if $A$ and $B$ are u.e.i. c.e.
lattices then the c.e. pre-lattice $A\oplus B$, which equals $A\times B$, is
u.e.i. as well. We code $A\oplus B$ as $\omega$ by using the Cantor pairing
function, and let $\pi_0, \pi_1$ be the computable projections associated
with the Cantor pairing function. Since u.e.i. implies u.f.p.
(Fact~\ref{fact:implications}) we know that $A$ and $B$ are u.f.p.: pick two
totalizers $K^A, K^B$ for $\equiv_A$ and $\equiv_B$, respectively. In order
to show that $A\oplus B$ is u.f.p., for every $D,e,x$ let $u^A_{D,e},
u^B_{D,e}$ be indices of partial computable functions as follows: to compute
$\phi_{u^A_{D, e}}(x)$  wait until $\phi_e(x)\downarrow \equiv_{A\oplus B} d$
for some $d\in D$: if and when this happens then make $\phi_{u^A_{D,
e,x}}(0)$ converge to $\pi_0(d)$; otherwise $\phi_{u^A_{D, e,x}}(0)$
diverges. The computation $\phi_{u^B_{D, e,x}}(0)$ is defined similarly,
outputting $\pi_1(d)$ when it converges. Define now
\[
k(D,e,x)=\langle k^A(\pi_0[D],u^A_{D,e}, x),
k^B(\pi_1[D],u^B_{D,e}, x)\rangle.
\]
Using that $K^A, K^B$ are totalizers for $\equiv_A$ and $\equiv_B$,
respectively, it is straightforward to see that $k$ is a totalizer for
$\equiv_{A\oplus B}$. Since (Fact~\ref{fact:implications}) u.f.p. implies
u.e.i. we have that $A\oplus B$ is u.e.i.\,. Uniformity from indices of
uniform productive functions for $A$ and $B$, respectively, to indices of a
productive function for $A\oplus B$ follows from the following facts: the
implications stated in Fact~\ref{fact:implications} are uniform, i.e. one can
uniformly go from an index of a uniform productive function for a u.e.i. ceer
to an index of a totalizer of that ceer, and vice versa; and the proof of the
above claim shows that one can uniformly go from indices of $k^A, k^B$ to
indices of $k$.
\end{proof}

\begin{rem}
Notice that $M$ and $M\oplus M^*$ are distributive if and only if $L$ is
distributive.
\end{rem}

\section{Local universality and uniform density}
When dealing with ordered or pre-ordered structures, one of the most
immediate and important tasks is to study density questions, in particular
whether a given structure is dense or where density locally holds or fails.
For pre-lattices, effective inseparability turns out to be, as far as density
goes, a powerful tool of investigation.

\subsection{Density}
We first look at the case in which a c.e. pre-lattice $L$ need not be bounded
but it has a nontrivial interval $[a,b]_L$ such that the pair $([a]_{L},
[b]_{L})$ is e.i.\,. Before proving in Corollary~\ref{cor:interval} that the
bounded interval $[a,b]_L$ enjoys the universality property already shown of
any e.i. pre-lattice, namely every c.e. pre-order can be computably embedded
into the interval $[a,b]_L$, we first notice, by the following example, that
such a c.e. pre-lattice, even if bounded, need not be u.e.i.\,.

\begin{eg}
Let $L$ be a u.e.i. pre-lattice. Form $L'$ by placing a new element called
$0$ below every element of $L$ and a new element called $1$ above every
element of $L$. Formally, let $h$ be a computable bijection of $\omega$ with
$\omega \smallsetminus \{0,1\}$, and define $\leq_{L'}$ be the c.e.
pre-ordering given by $x \leq_{L'} y$ if and only if
\[
x=0 \lor y=1 \lor [\{x,y\}\cap \{0,1\}=\emptyset \,\&\,
h^{-1}(x) \leq_{L} h^{-1}(y)].
\]
It is now immediate to see that there is a c.e. bounded pre-lattice
$L'=\langle \omega, \wedge, \lor, 0,1,\leq_{L'}\rangle$ such that
$[0]_{L'}=\{0\}$, $[1]_{L'}=\{1\}$, and thus the pair $([0]_{L'},[1]_{L'})$
is not e.i.\,. On the other hand all disjoint pairs $([a]_{L'}, [b]_{L'})$
with $\{a,b\}\cap \{0,1\}= \emptyset$ are e.i.\,.
\end{eg}

\begin{defn}
A c.e. pre-order $R$ is \emph{locally universal} if for every pair $a,b$ such
that $a<_R b$ one can computably embed any c.e. pre-order in the interval
$[a,b]_R$. A c.e. pre-lattice (Boolean pre-algebra, Heyting pre-algebra,
etc.) is \emph{locally universal} if so is its pre-ordering relation.
\end{defn}

\begin{cory}\label{cor:interval}
If $L=\langle \omega, \wedge, \vee, 0,1, \leq_{L} \rangle$ is a c.e.
pre-lattice and $([a]_{L}, [b]_{L})$ are e.i. with $a<_{L}b$ then one can
reduce any c.e. pre-order to the interval $[a,b]_{L}$.
\end{cory}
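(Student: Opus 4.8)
The plan is to combine the two results immediately preceding the statement. By Lemma~\ref{lem:restriction}, since $a<_L b$ and $([a]_L,[b]_L)$ is e.i., the interval $[a,b]_L$ is computably isomorphic to an e.i.\ pre-lattice $L'$. Inspecting the proof of that lemma, this isomorphism is witnessed by a computable bijection $h:\omega \longrightarrow [a,b]_L$ with $h(0)=a$ and $h(1)=b$, where $L'=\langle \omega, \wedge, \vee, 0,1, \leq_{L'}\rangle$ is the e.i.\ pre-lattice defined by $x \leq_{L'} y$ if and only if $h(x) \leq_L h(y)$. The crucial feature I would record here is that $h$ is an honest bijection \emph{onto} the interval $[a,b]_L$, not merely an abstract reduction.

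Next I would invoke Theorem~\ref{thm:ei-lattices-universal} applied to $L'$. Since $L'$ is an e.i.\ pre-lattice, its pre-ordering relation $\leq_{L'}$ is universal, so for any given c.e.\ pre-order $R=\langle \omega, \leq_R\rangle$ there is a computable function $g$ with $x \rel{R} y$ if and only if $g(x) \leq_{L'} g(y)$ for all $x,y$.

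Finally I would set $f = h \circ g$, a computable function. For all $x,y$ we then have $x \rel{R} y \Leftrightarrow g(x) \leq_{L'} g(y) \Leftrightarrow h(g(x)) \leq_L h(g(y)) \Leftrightarrow f(x) \leq_L f(y)$, using the definition of $\leq_{L'}$ for the middle equivalence; hence $f$ reduces $\leq_R$ to $\leq_L$. Since $h$ maps $\omega$ into $[a,b]_L$, the range of $f$ is contained in $[a,b]_L$, which is exactly the desired reduction into the interval. There is no genuine obstacle: the corollary is a direct composition of Lemma~\ref{lem:restriction} with Theorem~\ref{thm:ei-lattices-universal}, and the only point needing a moment's care is precisely the observation that the isomorphism of Lemma~\ref{lem:restriction} is realized by a bijection onto $[a,b]_L$, which is what forces the composite $f$ to land inside the interval.
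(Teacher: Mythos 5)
Your proof is correct and is exactly the paper's argument: the paper proves the corollary by citing Lemma~\ref{lem:restriction} to get that $[a,b]_L$ is computably isomorphic to an e.i.\ pre-lattice and then implicitly composing with the universality from Theorem~\ref{thm:ei-lattices-universal}. You have simply made explicit the detail (the bijection $h$ onto the interval) that the paper leaves to the reader.
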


\begin{proof}
Trivial, since by Lemma~\ref{lem:restriction} $[a,b]_{L}$ is computably
isomorphic to an e.i. pre-lattice.
\end{proof}

\begin{cory}\label{cor:locally-universal}
Every u.e.i. c.e. pre-lattice is locally universal.
\end{cory}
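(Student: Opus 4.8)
The plan is to read this off immediately from Corollary~\ref{cor:interval}, so the work is almost entirely bookkeeping. First I would unwind the hypothesis: a c.e. pre-lattice $L$ being u.e.i. means, by Definition~\ref{defn:ei-preorders-and-other-things}, that the ceer $\equiv_L$ is u.e.i. By the chain of implications in Fact~\ref{fact:implications} (specifically $\textrm{u.e.i.}\Rightarrow\textrm{e.i.}$), it follows that $\equiv_L$ is e.i.\ as a ceer, i.e.\ \emph{every} pair of distinct $\equiv_L$-equivalence classes is an e.i.\ pair of sets.

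Next I would fix an arbitrary pair $a,b$ with $a<_L b$, which is the only case the definition of local universality requires me to handle. Since $a<_L b$ we have $a\not\equiv_L b$, so $[a]_L$ and $[b]_L$ are distinct classes; by the previous paragraph the pair $([a]_L,[b]_L)$ is therefore e.i. At this point the hypotheses of Corollary~\ref{cor:interval} are exactly met (a c.e.\ pre-lattice $L$ together with an e.i.\ pair $([a]_L,[b]_L)$ and $a<_L b$), and that corollary gives a computable reduction of any c.e.\ pre-order into the interval $[a,b]_L$.

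Finally, since $a,b$ were an arbitrary pair with $a<_L b$, and for each such pair we have embedded every c.e.\ pre-order into $[a,b]_L$, this is precisely the assertion that $L$ is locally universal, completing the argument. I do not anticipate any genuine obstacle: the entire substantive content is already packaged in Corollary~\ref{cor:interval} (which itself rests on the computable isomorphism of a suitable interval with an e.i.\ pre-lattice provided by Lemma~\ref{lem:restriction}), and the only additional ingredient needed here is the soft implication $\textrm{u.e.i.}\Rightarrow\textrm{e.i.}$ applied at an arbitrary interval rather than just at the top interval $[0,1]_L$.
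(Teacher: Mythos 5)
Your proposal is correct and follows exactly the paper's argument: the paper's proof is the one-line observation that in a u.e.i.\ c.e.\ pre-lattice all pairs of distinct equivalence classes are e.i., so Corollary~\ref{cor:interval} applies to any interval $[a,b]_L$ with $a<_L b$. You have merely spelled out the same reduction (via Fact~\ref{fact:implications} and Definition~\ref{defn:ei-preorders-and-other-things}) in more detail.
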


\begin{proof}
This is immediate by Corollary~\ref{cor:interval} since in a u.e.i. c.e.
pre-lattice, all pairs of distinct equivalence classes are e.i.\,.
\end{proof}

\begin{rem}
If $L$ is a u.e.i. c.e. pre-lattice then local universality is uniform: from
$a,b$ and a c.e. index for a c.e. pre-order $R$ one can uniformly find an
index of a  computable function $f$ which is a computable embedding of $R$
into $[a,b]_L$ if $a<_L b$. This follows from the fact that the proof of
Theorem~\ref{thm:ei-lattices-universal} is uniform in an index of $\leq_R$,
and from the observations on uniformity of
Lemmata~\ref{lem:ei-restriction}~and~\ref{lem:restriction} made in the proof
of Theorem~\ref{thm:from-uei-to}.
\end{rem}

\subsection{Uniform density}
Uniform density for c.e. pre-lattices appearing in logic and for c.e.
precomplete pre-lattices is treated in detail in \cite{Shavrukov-Visser}, to
which the reader is referred also for motivations and historical remarks. We
recall the definition.

\begin{defn}
A c.e. pre-ordering $\leq_R$ is \emph{uniformly dense} if there exists a
computable function $f(a,b)$ so that for all $a,b$ if $a<_R b$ then $a <_R
f(a,b) <_R b$, and for all pairs $a',b'$ such that $a \equiv_R a'$ and $b
\equiv_R b'$ we have that $f(a,b) \equiv_R f(a',b')$. A c.e. pre-lattice
(Boolean pre-algebra, Heyting pre-algebra, etc.) is \emph{uniformly dense} if
so is its pre-ordering relation.
\end{defn}

The proof of the next theorem follows, via a natural generalization, along
the lines of the proof of uniform density for c.e. precomplete pre-lattices
in \cite{Shavrukov-Visser}.

\begin{thm}\label{thm:unif-density}
Every u.e.i. c.e. pre-lattice is uniformly dense.
\end{thm}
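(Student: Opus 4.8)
The plan is to exploit the u.f.p.\ structure---equivalently u.e.i.\ by Theorem~\ref{thm:from-uei-to}---to build, uniformly in $a,b$, a ``midpoint'' element $f(a,b)$ strictly between $a$ and $b$ whenever $a<_L b$, using the productive function to force incomparabilities that separate $f(a,b)$ from the endpoints. First I would recall from Corollary~\ref{cor:locally-universal} and the uniformity remark that follows it that in a u.e.i.\ c.e.\ pre-lattice we can uniformly reduce any c.e.\ pre-order into any nontrivial interval $[a,b]_L$. The natural strategy is to embed a fixed c.e.\ pre-order that has a built-in ``generic middle element''---for instance a three-element pre-order $c <^* m <^* d$, or better a pre-order in which a distinguished point sits strictly between a bottom and a top---and read off the image of that middle point as the value $f(a,b)$. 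Since the embedding is uniform in $a,b$ and lands inside $[a,b]_L$, the image of the middle point automatically satisfies $a <_L f(a,b) <_L b$.

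The key steps, in order, are: (1) fix once and for all a concrete c.e.\ pre-order $R_0$ on $\omega$ with distinguished elements $0^*,m^*,1^*$ satisfying $0^* <_{R_0} m^* <_{R_0} 1^*$; (2) invoke the uniform version of Theorem~\ref{thm:ei-lattices-universal} (via Lemmata~\ref{lem:ei-restriction} and~\ref{lem:restriction}) to obtain, uniformly from $a,b$, a computable embedding $g_{a,b}$ of $R_0$ into the interval $[a,b]_L$, defined precisely when $a<_L b$; (3) set $f(a,b)=g_{a,b}(m^*)$ when $a<_L b$, and $f(a,b)=a$ otherwise, so that $f$ is total computable and $a<_L f(a,b)<_L b$ follows from $0^*<_{R_0}m^*<_{R_0}1^*$ together with the fact that $g_{a,b}$ reduces $R_0$ to $\leq_L$ and lands in the interval; and (4) verify the congruence condition $a\equiv_L a'$, $b\equiv_L b' \Rightarrow f(a,b)\equiv_L f(a',b')$.

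The hard part will be step~(4): the bare uniform reduction produces a value $g_{a,b}(m^*)$ that depends on the \emph{indices} $a,b$ rather than on their $\equiv_L$-classes, so a priori $f(a,b)$ and $f(a',b')$ need not be $\equiv_L$-equivalent even when $a\equiv_L a'$ and $b\equiv_L b'$. Following the strategy of \cite{Shavrukov-Visser}, I would make the construction itself insensitive to the choice of representative: rather than treating $a,b$ as fixed external parameters, I would define $f(a,b)$ by a single construction that, at each stage, re-reads the current approximations to $\equiv_L$ and forces the midpoint's position using the productive (totalizer) function applied to c.e.\ indices that enumerate $[a]_L$ and $[b]_L$ directly. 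Because those indices can be computed uniformly from any representative and the totalizer's output depends only on the enumerated sets, replacing $a,b$ by $\equiv_L$-equivalent $a',b'$ feeds the same pair of c.e.\ sets (namely $[a]_L=[a']_L$ and $[b]_L=[b']_L$) into the same productive function, yielding the same forced $\equiv_L$-class for $f$. Concretely, I would recursively define $f(a,b)$ as the value produced by applying the u.f.p.\ totalizer $k$ of Theorem~\ref{thm:from-ei-to-ufp} (as relativized to the interval via Lemma~\ref{lem:restriction}) to data extracted from $[a]_L,[b]_L$ alone, and check that this makes the whole map a genuine congruence-respecting density function. The verification that $f$ respects $\equiv_L$ is then reduced to the observation that the inputs to $k$ depend only on the equivalence classes, while the strict inequalities $a<_L f(a,b)<_L b$ are inherited, as in step~(3), from the embedding property on the interval.
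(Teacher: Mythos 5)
Your steps (1)--(3) would indeed produce, for each pair $a,b$ with $a<_L b$, some element strictly between them; the difficulty, as you correctly identify, lies entirely in step (4), and the mechanism you propose for it does not work. You assert that because c.e.\ indices for $[a]_L,[b]_L$ (or canonical indices of finite sets drawn from these classes) ``depend only on the equivalence classes,'' feeding them to the totalizer or productive function yields the same forced $\equiv_L$-class. But a productive function, and likewise a totalizer $k(D,e,x)$, is a number-theoretic function of \emph{indices}: if $a\equiv_L a'$ with $a\neq a'$, the uniformly computed index $u(a)$ for $[a]_L$ differs from $u(a')$, and nothing in the definition of productivity or of u.f.p.-ness forces $p(u(a),v(b))$ and $p(u(a'),v(b'))$ --- or $k(\{a,b\},e,x)$ and $k(\{a',b'\},e',x)$ --- to be $\equiv_L$-equivalent. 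These functions are not extensional in their index arguments, so equality of the enumerated sets buys you nothing. Extensionality has to be actively forced, and the paper does this with a Recursion Theorem argument: it sets $f(a,b)=(a\vee j(a,b))\wedge b$ with $j(a,b)=k(D_{a,b},e_{a,b},0)$, where $D_{a,b}$ contains $a$, $b$, and all values $j(a',b')$ for pairs of smaller code, and whenever a pair $(a_m,b_m)$ of smaller code is seen to become $\equiv_L^2$-equivalent to $(a,b)$ it defines $\phi_{e_{a,b}}(0)=j(a_m,b_m)$, so that the totalizer property collapses $j(a,b)\equiv_L j(a_m,b_m)$ and hence $f(a,b)\equiv_L f(a_m,b_m)$. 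That active collapsing step is the missing idea in your proposal.

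Two further problems. First, the case split ``$f(a,b)=g_{a,b}(m^*)$ when $a<_L b$, and $f(a,b)=a$ otherwise'' is not computable: $a<_L b$ is the conjunction of a $\Sigma^0_1$ and a $\Pi^0_1$ condition, so your $f$ is not obviously total. The paper's algebraic form $(a\vee j(a,b))\wedge b$ is total, automatically lies in $[a,b]_{\leq_L}$ whenever $a\leq_L b$, and makes extensionality trivial in the degenerate case $a\equiv_L b$. Second, strictness $a<_L f(a,b)<_L b$ cannot simply be inherited from an embedding of a three-element chain once the construction has been modified to achieve extensionality; in the paper strictness is itself enforced by diagonalization (if $(a\vee j(a,b))\wedge b\equiv_L a$ is ever observed, one defines $\phi_{e_{a,b}}(0)=b$, which forces $a\equiv_L b$ and contradicts $a<_L b$, and symmetrically for $b$). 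The real content of the proof is managing these two competing demands on the single value $\phi_{e_{a,b}}(0)$, which your outline does not engage with.
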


\begin{proof}
Let $L$ be a u.e.i. pre-lattice, and let $k(D,e,x)$ be a computable function
witnessing that $\equiv_L$ is u.f.p.: see Theorem~\ref{thm:from-uei-to}.

Let $\{e_{a,b}: a,b \in \omega\}$ be a computable list of indices we control
by the Recursion Theorem. Denote by $\equiv^2_L$ the equivalence relation
\[
(a,b) \rel{\equiv^2_L} (a',b')
\Leftrightarrow \left[ a \rel{\equiv_L}
a' \,\&\, b \rel{\equiv_L} b'\right],
\]
and define
\[
f(a,b)=(a \vee j(a,b)) \wedge b,
\]
where $j(a,b)= k(D_{a,b}, e_{a,b}, 0))$ and $D_{a,b}=\{a,b, j(a',b'): \langle
a', b'\rangle < \langle a, b\rangle\}$. (Here and in the following, the
reader is invited to make sure to distinguish between the natural orderings
of $\omega$ for which we use the symbols $\leq, <$, and the pre-ordering or
the strict pre-ordering of $L$ for which we use the symbols $\leq_L, <_L$).

Now we specify how to compute the various $\phi_{e_{a,b}}(0)$.

At step $0$ all computations $\phi_{e_{a,b}}(x)$ are undefined.

At step $s+1$ we consider all pairs $(a,b)$ with $\langle a, b \rangle \le
s$, for which $\phi_{e_{a,b}}(0)$ is still undefined:

\begin{enumerate}
  \item for each such pair $(a,b)$ let $(a_m, b_m) \rel{\equiv^2_L} (a,b)$
      (at $s$) and $\langle a_m, b_m\rangle < \langle a,b\rangle$ is least
      with this property, if there is any such pair. If so, define
      $\phi_{e_{a,b}}(0)=j(a_m,b_m)$. By the properties of the totalizer
      $k$, this makes $j(a_m,b_m)\equiv_L j(a,b)$ and thus
      $f(a_m,b_m)\equiv_L f(a,b)$;

  \item\label{it:2} if after this, $(a,b)$ is still a pair such that
      $\phi_{e_{a,b}}(0)$ is undefined and $a \leq_L b$ at this stage then
      \begin{enumerate}
        \item if  $(a \vee j(a,b)) \wedge b) \equiv_L a$ (at $s$) then
            define $\phi_{e_{a,b}}(0)=b$: this forces $j(a,b) \equiv_L b$
            and thus $a\equiv_L b$;
        \item if  $(a \vee j(a,b)) \wedge b) \equiv_L b$ (at $s$) then
            define $\phi_{e_{a,b}}(0)=a$: this forces $j(a,b) \equiv_L a$
            and thus, again, $a\equiv_L b$.
      \end{enumerate}
\end{enumerate}
This ends the construction. Notice that $\phi_{e_{a,b}}(0)$ is eventually
defined if there is $(a',b')$ in the same $\equiv^2_L$-class as $(a,b)$  and
$\langle a',b' \rangle < \langle a,b \rangle$.

We now show that $f$ is extensional. As $f(a,b) \equiv_L a$  if $a \equiv_L
b$, then extensionality trivially holds when $a \equiv_L b$. Suppose that $a
\cancel{\equiv_L} b$. Then $\phi_{e_{a,b}}(0)$ has not been defined through
either subclause of clause~(\ref{it:2}), as this would give $a \equiv_L b$.
We claim that in this case $f(a,b)\equiv f(a_m,b_m)$ where $(a_m,b_m)$ is in
the same $\equiv^2_L$-class as $(a,b)$ and $\langle a_m, b_m\rangle$ is least
with this property. We prove by induction on $\langle a',b' \rangle$ that
this is true for all $(a',b')$ which is in the same $\equiv^2_L$-class as
$(a_m,b_m)$. The claim is trivial when $(a',b')=(a_m,b_m)$. Otherwise
$\phi_{e_{a,b}}(0)$ is defined at some stage (not through~(\ref{it:2}) as we
have already seen). Thus we have forced $j(a,b) \equiv_L j(a',b')$ with
$(a',b')$ in the same $\equiv^2_L$-class as $(a,b)$ but $\langle a',b'
\rangle < \langle a,b \rangle$. This makes $f(a,b)\equiv_L f(a',b')$. By
inductive assumption, $f(a',b')\equiv_L f(a_m,b_m)$: hence $f(a,b)\equiv_L
f(a_m,b_m)$.

Finally we show that if $a<_L b$ then $a<_L f(a,b) <_L b$: notice that we
have $a\leq_L f(a,b) \leq_L b$ for free as $a \le_L b$. Indeed by
extensionality we may assume that $\langle a, b\rangle$ is least among the
pairs in the $\equiv^2_L$-class of $(a,b)$. If $a<_L f(a,b) <_L b$ does not
hold then sooner or later clause~(\ref{it:2}) would force $a \equiv_L b$, a
contradiction.
\end{proof}

\section{Reducibilities inducing monomorphisms}

Given a class $\mathcal{A}$ of pre-ordered structures of the same type, it is
natural to study reductions (called \emph{$\mathcal{A}$-reductions}) between
the structures in $\mathcal{A}$ which \emph{respect} the operations. For
instance:

\begin{defn}\label{def:homomorphisms}
Let $\mathcal{L}$ be the class of c.e. pre-lattices , and assume that $L_{1}=
\langle \omega, \lor_{1}, \wedge_{1}, \leq_{1}\rangle$ and $L_{2}=\langle
\omega, \lor_{2}, \wedge_{2}, \leq_{2} \rangle$ lie in $\mathcal{L}$. We say
that a computable functions $f$ \emph{$\mathcal{L}$-reduces} $L_{1}$ to
$L_{2}$ (notation: $L_{1} \preceq_\mathcal{L} L_{2}$) if $f$ reduces
$\leq_{L_{1}}$ to $\leq_{L_{2}}$ and for all $x,y \in L_{1}$,
\begin{align*}
f(x\lor_{1} y) &\equiv_{L_{2}} f(x) \lor_{2} f(y),\\
f(x \wedge_{1} y) &\equiv_{L_{2}} f(x) \wedge_{2} f(y).
\end{align*}
Clearly $f$ induces a monomorphism between the associated quotient
structures. Given a class $\mathcal{A}$ of c.e. pre-structures of the same
type, we say that a c.e. pre-structure $L$ is \emph{$\mathcal{A}$-universal}
(or \emph{universal with respect to $\mathcal{A}$}) if $L$ has a type
containing the type of the pre-structures in $\mathcal{A}$ and $A
\preceq_{\mathcal{A}} L^{\textrm{red}}$ for every $A \in \mathcal{A}$ (where
$L^{\textrm{red}}$ is the pre-structure in $\mathcal{A}$ obtained from $L$ by
restricting its type to the type of $\mathcal{A}$).
\end{defn}

In addition to using the symbol $\mathcal{L}$ to denote the class of c.e.
pre-lattices, in the following we will use the following notations:
$\mathcal{B}$ is the class of c.e. Boolean pre-algebras;
$\mathcal{L}^d_{0,1}$ comprises the c.e. distributive bounded pre-lattices;
$\mathcal{L}^d_{0}$ comprises the c.e. distributive pre-lattices with a least
element; $\mathcal{L}^d$ is the class of c.e. distributive pre-lattices. We
will keep the term ``universal'' without specifications, to mean universality
with respect to the c.e. pre-orders: thus a c.e. pre-ordered structure $L$
with c.e. pre-ordering relation $\leq_{L}$ is \emph{universal} if $\leq_R
\preceq \leq_{L}$ for every c.e. pre-order $R$.

\begin{lemma}[\cite{Montagna-Sorbi:Universal}]\label{lem:universal classes}
$\mathcal{L}^d$-universality implies universality;
$\mathcal{L}^d_{0,1}$-universality implies $\mathcal{L}^d$-universality;
$\mathcal{B}$-universality implies $\mathcal{L}^d_{0,1}$-universality.
\end{lemma}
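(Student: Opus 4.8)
The plan is to prove all three implications by one scheme. In each case the passage is from universality with respect to a ``richer'' class $\mathcal{A}$ to universality with respect to a ``poorer'' class $\mathcal{C}$ whose type is contained in that of $\mathcal{A}$, and it suffices to produce, for each c.e.\ pre-structure $C\in\mathcal{C}$, a c.e.\ pre-structure $D\in\mathcal{A}$ together with a computable map $\iota\colon C\to D$ that reduces $\leq_C$ to $\leq_D$ and respects all the operations of the type of $\mathcal{C}$; that is, $\iota$ is itself (in the sense of Definition~\ref{def:homomorphisms}) a $\mathcal{C}$-reduction of $C$ into $D^{\textrm{red}}$. Given such $\iota$, composing it with an $\mathcal{A}$-reduction $g$ of $D$ into $L$ yields a $\mathcal{C}$-reduction of $C$ into $L$: both maps reduce the pre-ordering, and since the type of $\mathcal{A}$ contains that of $\mathcal{C}$ the map $g$ respects in particular the operations of $\mathcal{C}$, so the composite does too. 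Thus the whole lemma would reduce to three effective embedding facts. The easiest is the one for ``$\mathcal{L}^d_{0,1}$-universality implies $\mathcal{L}^d$-universality'': given $A\in\mathcal{L}^d$, I would adjoin a fresh bottom and a fresh top to obtain $A^+\in\mathcal{L}^d_{0,1}$ in which $A$ sits as a sublattice (meets and joins of old elements being unchanged), and take $\iota$ to be the inclusion.

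For ``$\mathcal{L}^d$-universality implies universality'' I would take $C$ to be a c.e.\ pre-order $R=\langle\omega,\leq_R\rangle$ of empty type and let $D$ be the free distributive pre-lattice generated by $\omega$ subject to the relations $\{g_x\leq g_y:x\leq_R y\}$: its universe is a c.e.\ set of lattice terms coded in $\omega$, its operations are computable, and $\leq_D$ is c.e.\ since it is derivability from a c.e.\ list of relations in the variety of distributive lattices. The key point is faithfulness on generators, $g_x\leq_D g_y\iff x\leq_R y$, which I would extract from the principal down-set map $x\mapsto\{z:z\leq_R x\}$ into the distributive power-set lattice: this realises every generating relation yet separates $g_x$ from $g_y$ whenever $x\not\leq_R y$. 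As $R$ has no operations, only this order-embedding is needed, and composing it with the order-reduction furnished by $\mathcal{L}^d$-universality gives $\leq_R\preceq\leq_L$.

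For ``$\mathcal{B}$-universality implies $\mathcal{L}^d_{0,1}$-universality'' I would embed a c.e.\ bounded distributive pre-lattice $C$ into the free Boolean extension $B$ of $C$, that is, terms over the Boolean signature modulo the congruence generated by the relations of $C$; this is again a c.e.\ Boolean pre-algebra, and it carries a computable map $\iota\colon C\to B$ respecting $\vee,\wedge,0,1$. Faithfulness of $\iota$ is the one place where a genuine representation theorem enters: by Birkhoff--Stone every distributive lattice embeds into a field of sets, hence into a Boolean algebra, so the universal property forces the canonical map $C\to B$ to be faithful on the pre-ordering. Composing $\iota$ with a $\mathcal{B}$-reduction of $B$ into $L$ (and simply forgetting the complement) then gives a map respecting $\vee,\wedge,0,1$ and reducing $\leq_C$, i.e.\ an $\mathcal{L}^d_{0,1}$-reduction of $C$ into $L$.

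I expect the routine part to be the bookkeeping of effectiveness: in each free construction one checks that the induced pre-ordering is c.e.\ (being provability from a c.e.\ set of relations) and that the operations are computable on the coded term universe. The genuine obstacle will be faithfulness of the canonical embeddings; among the three the last is the deepest, resting on the embeddability of distributive lattices into Boolean algebras, whereas the first two faithfulness checks are elementary.
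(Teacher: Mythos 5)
Your proposal is correct and follows essentially the same route as the paper: the paper's proof is exactly this composition scheme, outsourcing the two nontrivial embedding facts (a c.e.\ pre-order into a c.e.\ distributive pre-lattice, and a c.e.\ bounded distributive pre-lattice into a c.e.\ Boolean pre-algebra) to Theorems~3.1 and~3.2 of \cite{Montagna-Sorbi:Universal}, and dismissing the middle implication as trivial. Your sketches of those two embeddings (faithfulness via the down-set representation for the free distributive pre-lattice on a pre-order, and via Birkhoff--Stone for the free Boolean extension) are the standard arguments behind the cited results.
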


\begin{proof}
The first claim follows from the fact that every c.e. pre-order can be
reduced to the pre-ordering relation of some c.e. distributive pre-lattice,
see for instance the proof of  \cite[Theorem~3.1]{Montagna-Sorbi:Universal}.
The second claim is trivial. For the third claim see
\cite[Theorem~3.2]{Montagna-Sorbi:Universal}.
\end{proof}

\begin{thm} \label{thm:Boolean-universal}\cite{Montagna-Sorbi:Universal}
Every e.i. Boolean pre-algebra is $\mathcal{B}$-universal (by
Lemma~\ref{lem:universal classes}, this implies universality and
$\mathcal{A}$-universality for the classes $\mathcal{A}$ specified in the
Lemma).
\end{thm}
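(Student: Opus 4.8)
The plan is to build, for an arbitrary c.e.\ Boolean pre-algebra $B'=\langle\omega,\wedge',\lor',\neg',0',1',\leq_{B'}\rangle$, a computable $\mathcal{B}$-reduction $f\colon B'\to B$ in the sense of Definition~\ref{def:homomorphisms}, i.e.\ a computable map preserving $\wedge',\lor'$ modulo $\equiv_B$ and inducing a monomorphism of the quotient Boolean algebras; by Lemma~\ref{lem:universal classes} this yields the remaining universalities. The feature that makes the Boolean case succeed — and, in view of Example~\ref{ex:nondistr}, makes it genuinely different from the lattice case — is the following one-line consequence of effective inseparability: \emph{the quotient of $B$ is atomless}. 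Indeed, if $[a]_B$ were an atom then $a\wedge x\equiv_B 0$ or $a\wedge x\equiv_B a$ for every $x$, so $S=\{x: a\leq_B x\}$ and $T=\{x: a\wedge x\equiv_B 0\}$ would be disjoint c.e.\ sets covering $\omega$, hence both computable; since $[1]_B\subseteq S$ and $[0]_B\subseteq T$, the set $S$ would computably separate the e.i.\ pair $([0]_B,[1]_B)$, a contradiction. Atomlessness is exactly what guarantees that every nonzero element of $B$ can be split, the operation needed to copy arbitrary finite Boolean algebras.

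I would then construct $f$ as an increasing union of embeddings of the finite subalgebras of $B'$, tracked through their atoms. At each stage I maintain, for the subalgebra of $B'$ generated by $\{0,\dots,n\}$ under the current approximation to $\equiv_{B'}$, an \emph{independent partition of unity} $a_1,\dots,a_m$ in $B$ representing its atoms $\alpha_1,\dots,\alpha_m$: the $a_j$ are pairwise disjoint, join to $1$, and are each $\not\equiv_B 0$, and $f$ sends a Boolean combination of the $\alpha_j$ to the corresponding join of the $a_j$ — automatically an embedding of the finite algebra. Two kinds of events drive the construction. When the enumeration of $\equiv_{B'}$ reveals that two atoms coincide, or that some $\alpha_j\equiv_{B'}0'$, I merge (replace $a_i,a_j$ by $a_i\lor a_j$) or force the corresponding image to $\equiv_B 0$. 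When a new generator $n+1$ cuts an atom $\alpha_j$ into the two (possibly provisional) pieces $\alpha_j\wedge'(n+1)$ and $\alpha_j\wedge'\neg'(n+1)$, I must split $a_j$ into two disjoint nonzero pieces summing to $a_j$. The splitting step is where effective inseparability does the work: by Theorem~\ref{thm:from-ei-to-ufp} together with Fact~\ref{fact:implications}, $B$ is u.e.i., so every pair of distinct classes is e.i.; hence by Lemma~\ref{lem:restriction} the interval $[0,a_j]_B$ is computably isomorphic to a nontrivial e.i.\ pre-lattice, which by the atomlessness above has elements strictly between its bounds, and its productive function (used as in Observation~\ref{obs:incomparability}) lets me effectively and \emph{robustly} produce $b$ with $0<_B b<_B a_j$, yielding a genuine split.

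The main obstacle is that $\equiv_{B'}$ is only c.e., so the atom structure I am tracking is provisional and non-monotone: atoms already represented may later be discovered equal or zero, and a piece split off as nonzero may have to be re-merged. I therefore cannot commit to literal elements of $B$ once and for all, because a committed element might later need a nonzero split it cannot support, or need to be collapsed to $0$; and I must still guarantee that for each fixed $i$ the value $f(i)$ stabilizes modulo $\equiv_B$ and that the limit map is injective on classes. This is resolved exactly as in the proof of Theorem~\ref{thm:ei-lattices-universal}: I run all splitting, merging, and collapsing through a family of indices controlled by the Recursion Theorem and use the ``speedup'' of the enumeration of $\equiv_B$, so that for each $n$ only finitely many finite Boolean configurations are possible (the analogue of the finite poset $T_n$ there) and the productive functions force the images to track the \emph{final} configuration, keeping every designated piece out of $[0]_B$ unless $B'$ itself forces its collapse — which is precisely what makes the limit injective, hence order-reflecting. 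Verifying that this never produces a spurious collapse, the exact analogue of showing that action $\diamondsuit$ is never called, is the technical heart of the argument; granting it, $f$ is a computable $\mathcal{B}$-reduction, so $B$ is $\mathcal{B}$-universal, and Lemma~\ref{lem:universal classes} delivers the parenthetical consequences.
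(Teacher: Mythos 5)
First, a point of comparison: the paper does not prove Theorem~\ref{thm:Boolean-universal} at all --- it is imported verbatim from \cite{Montagna-Sorbi:Universal} (whose argument descends from \cite{Pour-El-Kripke}; see also \cite{Nies:Effectively}). So there is no internal proof to measure you against, and your sketch has to stand on its own. Much of it does: the opening observation that effective inseparability forces the quotient of $B$ to be atomless is correct and cleanly argued ($S$ and $T$ are indeed disjoint c.e.\ sets covering $\omega$, hence computable, and $S$ would separate $([0]_B,[1]_B)$); the mechanism for splitting a nonzero element using a productive function with Recursion-Theorem-controlled indices is also correct and is exactly the tool supplied by Observation~\ref{obs:incomparability} and Lemma~\ref{lem:ei-restriction}; and the overall plan --- copy $B'$ by tracking independent partitions of unity representing the atoms of its finite subalgebras --- is essentially the classical route.

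The genuine gap is that the one claim carrying the entire weight of the theorem is asserted rather than proved: that the construction ``keeps every designated piece out of $[0]_B$ unless $B'$ itself forces its collapse,'' i.e.\ that $f$ reflects the pre-order. You defer this to ``the exact analogue of action $\diamondsuit$,'' but the transfer is not routine, for two concrete reasons. First, in the Boolean setting the retargeting moves interact: if $a_j$ arose as $a\wedge p(u,v)$ and you force $p(u,v)\equiv_B 1$ to push $a_j$ up to $a$, you simultaneously force the sibling piece $a\wedge\neg p(u,v)$ down to $0$; so a $\diamondsuit$-style chain of pushes that is supposed to drive one spuriously-collapsed piece up to $1$ must be shown not to need, at the same time, the nonvanishing of pieces it is destroying --- the constraints ``pairwise disjoint, join to $1$, each nonzero'' are coupled in a way the order constraints of Theorem~\ref{thm:ei-lattices-universal} are not. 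Second, the contradiction argument only works if the productive indices of the offending piece \emph{and of every ancestor needed for the chain} are still uncommitted at the stage the spurious collapse is observed; in the lattice proof this availability is precisely the content of Claim~\ref{claim:fund} and has to be re-established here, since your merges and forced collapses consume indices as the enumeration of $\equiv_{B'}$ progresses. Without a precise definition of the finite configuration space replacing $T_n$, of which index is live for which configuration, and a proof of the analogue of Claim~\ref{claim:fund} plus the non-occurrence of $\diamondsuit$, what you have is a plausible outline of the Montagna--Sorbi/Pour-El--Kripke argument, not a proof.
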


In \cite{Visser1982} Visser introduced a c.e. extension of Heyting
Arithmetic, called $HA^*$ (which is obtained, as described in
\cite{DeJongh-Visser}, as $HA$ plus the Completeness Principle for $HA^*$ by
a fixed point construction). Let $\HA^*$ be the Heyting pre-algebra of the
Lindenbaum sentence algebra of $HA^*$ (i.e $\HA^*=\textrm{i}(HA^*)$ in the
notation of Section~\ref{special-HA})): clearly $\HA^*$ is an e.i. Heyting
pre-algebra as $HA^*$ extends $HA$.

Building on some proof techniques due to
Shavrukov~\cite{Shavrukov-Subalgebras}, Visser~\cite{Visser1985}, and
Zambella~\cite{ZambellaNJFL}, De~Jongh and Visser~\cite{DeJongh-Visser} prove
the following $\mathcal{A}$-universality result, where in this case
$\mathcal{A}$ is the class of all c.e. prime Heyting pre-algebras (a Heyting
algebra is \emph{prime} if its greatest element is join-irreducible).

\begin{thm}[\cite{DeJongh-Visser}]
$\HA^*$ is $\mathcal{A}$-universal.
\end{thm}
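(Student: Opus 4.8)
The plan is to produce, from an arbitrary c.e.\ prime Heyting pre-algebra $A=\langle \omega,\wedge_A,\lor_A,\rightarrow_A,\neg_A,0_A,1_A,\leq_A\rangle$, a computable $\mathcal{A}$-reduction $f$ into $\HA^*$: a computable map sending each element of $A$ to a (G\"odel number of a) sentence of $HA^*$ so that $f$ preserves the Heyting connectives modulo $\equiv_{HA^*}$ and $x\leq_A y$ iff $\vdash_{HA^*} f(x)\rightarrow f(y)$. The engine of the construction is a simultaneous fixed-point definition of a computable family of sentences $\sigma_n=f(n)$, obtained through the Recursion Theorem, whose defining clauses refer both to a c.e.\ stage-by-stage approximation of $\leq_A$ and to the arithmetized provability predicate $\mathrm{Prov}_{HA^*}$. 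What makes $\HA^*$ (as opposed to an arbitrary e.i.\ Heyting pre-algebra) the right target is the Completeness Principle that defines $HA^*$, namely $\vdash_{HA^*}\sigma\rightarrow \mathrm{Prov}_{HA^*}(\ulcorner\sigma\urcorner)$: it is precisely this principle that lets one convert the appearance of a relation $a\leq_A b$ at some stage into a genuine provable implication $f(a)\rightarrow f(b)$.

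Concretely, I would first fix the arithmetical apparatus and the Completeness Principle, following the self-referential techniques of Shavrukov~\cite{Shavrukov-Subalgebras}, Visser~\cite{Visser1985}, and Zambella~\cite{ZambellaNJFL}. Next I would choose the defining equations for the $\sigma_n$ so that applying the actual logical connectives to the $\sigma_n$ tracks the operations of $A$, forcing $\sigma_{a\wedge_A b}\equiv_{HA^*}\sigma_a\wedge\sigma_b$ and likewise for $\lor$, $\rightarrow$, and $\neg$; this yields operation-preservation, hence that $f$ induces a monomorphism of quotient structures once the order is correctly reduced. For the order, the forward implication ($a\leq_A b$ in the limit $\Rightarrow\ \vdash_{HA^*} f(a)\rightarrow f(b)$) is delivered by feeding the approximation of $\leq_A$ through the Completeness Principle. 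The converse (no spurious provable implications) I would secure by an effective-inseparability argument of the sort used throughout this paper: since $\HA^*$ is e.i., the pair $([0]_{HA^*},[1]_{HA^*})$ has a productive function, and a provable implication $f(a)\rightarrow f(b)$ in the absence of $a\leq_A b$ would, through the fixed-point clauses, separate this e.i.\ pair, a contradiction.

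The hardest part will be the simultaneous preservation of \emph{all} the Heyting operations while also respecting primeness. Implication is the delicate case, because its contravariance in the first argument forces the defining clauses for $\sigma_{a\rightarrow_A b}$ to reference the provability behaviour of $\sigma_a$ and $\sigma_b$ in opposed directions, and the fixed points must be arranged so that $\mathrm{Prov}_{HA^*}$ supplies exactly the implications wanted and no others. Matching primeness is the second pressure point: the disjunction property of $HA^*$ is the algebraic assertion that the top of $\HA^*$ is join-irreducible, i.e.\ that $\HA^*$ is prime, and the construction must be tuned so that $\vdash_{HA^*}\sigma_a\lor\sigma_b$ can occur only when the corresponding join already holds in $A$, which is exactly what the hypothesis that $1_A$ is join-irreducible provides. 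Keeping the Completeness Principle (which grants implications) in balance with the productive-function argument (which withholds the unwanted ones), coherently with this join/disjunction interaction, is the crux on which the choice of fixed-point equations turns.
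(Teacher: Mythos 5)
The paper does not prove this theorem at all: it is imported verbatim from De Jongh and Visser \cite{DeJongh-Visser}, with only the remark that their proof builds on self-referential techniques of Shavrukov \cite{Shavrukov-Subalgebras}, Visser \cite{Visser1985}, and Zambella \cite{ZambellaNJFL}. So there is no in-paper argument to compare yours against; the only question is whether your text would stand as a proof on its own, and it would not. What you have written is a roadmap, not a proof: every technically substantive step --- the actual defining equations for the fixed-point sentences $\sigma_n$, the verification that conjunction, disjunction, implication and negation are all preserved simultaneously modulo $\equiv_{HA^*}$, and the argument that no spurious provable implications arise --- is deferred behind phrases like ``I would choose the defining equations so that\dots'' and is then explicitly flagged by you as ``the hardest part'' and ``the crux on which the choice of fixed-point equations turns.'' Identifying where the difficulty lies is not the same as resolving it, and for this theorem the difficulty you name (the contravariance of $\rightarrow$ in its first argument interacting with the Completeness Principle and the disjunction property) is precisely what occupies the technical core of the De Jongh--Visser argument.

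That said, your high-level instincts are sound and consistent with the attributed sources: the Completeness Principle $\vdash_{HA^*}\sigma\rightarrow \mathrm{Prov}_{HA^*}(\ulcorner\sigma\urcorner)$ is indeed what distinguishes $\HA^*$ from a generic e.i.\ Heyting pre-algebra and is the mechanism for converting stage-by-stage approximations of $\leq_A$ into genuine provable implications; effective inseparability is the standard tool for withholding unwanted implications; and you correctly explain why the class $\mathcal{A}$ must be restricted to \emph{prime} pre-algebras (the disjunction property of $HA^*$ makes the top of $\HA^*$ join-irreducible, so a monomorphism can only come from an algebra whose top is likewise join-irreducible). To turn this into an actual proof you would have to write out and verify the fixed-point equations in full; otherwise the honest move is to do what the paper does and cite \cite{DeJongh-Visser}.
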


\subsection{An e.i. pre-lattice which not $\mathcal{L}^d_{0,1}$-universal}
Although every e.i. pre-lattice is universal, and despite the fact that the
e.i. Boolean pre-algebras are $\mathcal{B}$-universal by
Theorem~\ref{thm:Boolean-universal}, and also the fact there exist e.i.
distributive pre-lattices which are $\mathcal{L}^d_{0,1}$-universal as we
will observe in Section~\ref{sct:applications}, the following theorem shows
that one should not be led to conclude that all e.i. pre-lattices are
$\mathcal{L}$-universal, or even that all e.i. distributive pre-lattices are
$\mathcal{L}^d$-universal.

\begin{thm}\label{thm:not-all-of-them}
There exist e.i. distributive pre-lattices that are not
$\mathcal{L}^d$-universal.
\end{thm}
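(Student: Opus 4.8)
The plan is to take for $L$ the free bounded distributive e.i. pre-lattice $L^d_{01}$ of Example~\ref{ex:eidistr}, whose associated quotient structure is the free bounded distributive lattice $F$ on countably many generators, and to exhibit a c.e. distributive pre-lattice $A$ whose quotient cannot be embedded, as a lattice, into $F$. The obstruction will be the following purely lattice-theoretic property, which I will show holds in $F$:

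\smallskip
$(\ast)$: there is no element $w$ together with infinitely many elements $(p_n)_{n \in \omega}$, each satisfying $p_n > w$, such that $p_i \wedge p_j = w$ for all $i \ne j$ (an infinite family pairwise meeting to a common strictly smaller element).
\smallskip

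Since a $\mathcal{L}^d$-reduction $f$ from $A$ to $L$ induces a lattice monomorphism of $A_{/\equiv_A}$ into $F$ (Definition~\ref{def:homomorphisms}), and since $(\ast)$ is inherited by sublattices (a witnessing configuration in a sublattice is, verbatim, a witnessing configuration in the ambient lattice, meets being computed identically), it suffices to produce a c.e. distributive pre-lattice $A$ whose quotient \emph{violates} $(\ast)$: then $A_{/\equiv_A}$ is isomorphic to no sublattice of $F$, so $A \not\preceq_{\mathcal{L}^d} L$.

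The heart of the argument, and where I expect the main work to lie, is proving $F \models (\ast)$. I would use the standard set representation of $F$: sending each generator to $U_i = \{T \subseteq \omega : i \in T\}$ identifies $F$ with the lattice of finitely generated up-sets of $(\mathcal{P}_{\mathrm{fin}}(\omega), \subseteq)$, each such up-set being $\mathrm{up}(Q) = \{T : (\exists R \in Q)\, R \subseteq T\}$ for a finite antichain $Q$ of finite sets, with meet given by intersection and join by union. Suppose toward a contradiction that $w = \mathrm{up}(W)$, with $W$ generated by finite sets $G_1, \dots, G_m$, and up-sets $P_n \supsetneq w$ witness a failure of $(\ast)$. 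Then $P_i \cap P_j = w$ forces $(P_i \setminus w) \cap (P_j \setminus w) = \emptyset$, and I would pick for each $n$ a minimal generator $A_n$ of $P_n$ lying outside $w$ (one exists, else $P_n = w$); hence $A_i \cup A_j \in \mathrm{up}(A_i) \cap \mathrm{up}(A_j) \subseteq P_i \cap P_j = w$, while each $A_n \notin w$. Putting $G = \bigcup_t G_t$ (a finite set), infinitely many $A_n$ share a common trace $A_n \cap G = c$; for two such indices $i \ne j$ some $G_t \subseteq A_i \cup A_j$, whence $G_t \subseteq (A_i \cup A_j) \cap G = c \subseteq A_i$, so $A_i \in w$, a contradiction. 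This establishes $(\ast)$ for $F$, and therefore for every sublattice of $F$.

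It remains to choose $A$ violating $(\ast)$. Any infinite Boolean algebra contains an infinite pairwise-disjoint family of nonzero elements, so I would take $A$ to be the (bounded distributive) pre-lattice reduct of an infinite e.i. Boolean pre-algebra, e.g. $B_{PA}$ or any $B_T$ of Example~\ref{ex:BT}: its quotient is an infinite Boolean algebra, in which $w = 0$ together with such a disjointed family gives infinitely many $p_n > 0$ pairwise meeting to $0$. (Equally one may take the explicit computable $A$ whose quotient is $\mathcal{P}_{\mathrm{fin}}(\omega)$ with a top adjoined, where the singletons already witness the failure.) Assembling the pieces: $A$ is a c.e. distributive pre-lattice, $A_{/\equiv_A}$ violates $(\ast)$, every sublattice of $F$ satisfies $(\ast)$, and a $\mathcal{L}^d$-reduction would realize $A_{/\equiv_A}$ as such a sublattice, which is impossible. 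Hence $L^d_{01}$ is an e.i. distributive pre-lattice that is not $\mathcal{L}^d$-universal. The only nonroutine step is the combinatorial lemma $F \models (\ast)$; the rest follows from the constructions already recorded.
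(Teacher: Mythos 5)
Your proposal is correct, and it follows the same overall strategy as the paper: both take $L=L^d_{01}$, both observe that an $\mathcal{L}^d$-reduction induces a lattice monomorphism of quotients into a free bounded distributive lattice $F^d_{0,1}(X)$, and both use an infinite Boolean algebra as the structure that cannot be so embedded. Where you diverge is in the key combinatorial lemma. The paper fixes a pair $a\le_{} b$ in $F^d(X)$ and shows, by manipulating normal forms $\sum_{i\in I_x}\prod U^x_i$, that only finitely many pairs $(x,y)$ can satisfy $x\wedge y=a$ and $x\vee y=b$ (the generators occurring in $x,y$ are confined to a finite set determined by $a$ and $b$), against the infinitely many complementary pairs of an infinite Boolean algebra. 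You instead prove the one-sided property $(\ast)$ -- no infinite family $p_n>w$ with $p_i\wedge p_j=w$ -- via the representation of $F^d(X)$ as finitely generated up-sets of $(\mathcal{P}_{\mathrm{fin}}(\omega),\subseteq)$, and play it against an infinite pairwise-disjoint family in the Boolean algebra. Your pigeonhole argument on the traces $A_n\cap G$ is clean and checks out, and it is arguably shorter than the paper's normal-form computation; the paper's version buys slightly more (it constrains joins as well as meets, and its witnessing fact about infinite Boolean algebras -- infinitely many complementary pairs -- is completely immediate, whereas the existence of an infinite disjoint family requires a standard but nonzero argument, which your fallback example $\mathcal{P}_{\mathrm{fin}}(\omega)$ neatly sidesteps). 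Two small points you should make explicit: since $F^d_{0,1}(X)=0\oplus F^d(X)\oplus 1$ with the bounds freely adjoined, a violating configuration for $(\ast)$ can be assumed to lie entirely in $F^d(X)$ (no two elements of $F^d(X)\cup\{1\}$ meet to the adjoined $0$, and nothing exceeds $1$), so the up-set representation applies; and the quotient of $L^d_{01}$ is $F^d_{0,1}(X')$ for a coinfinite deletion of generators, which is again free bounded distributive on countably many generators, so the lemma applies to it verbatim.
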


\begin{proof}
Since we know from Example~\ref{ex:eidistr} that there is an e.i.
distributive pre-lattice $L^{d}_{01}$ which is a free bounded distributive
pre-lattice on a countably infinite set of generators, it is enough to show
that for every $X$ not all countable distributive lattices which are
isomorphic to quotient structures of c.e. pre-lattices (see
Definition~\ref{defn:associated-quotient}) can be lattice-embedded into
$F_{0,1}^d(X)$: the claim then follows from the fact that by
Example~\ref{ex:eidistr} the quotient structure corresponding to $L^{d}_{01}$
is isomorphic to some such $F_{0,1}^d(X)$. To see this, take $B$ to be any
infinite Boolean algebra. Suppose that we have a lattice theoretic-theoretic
embedding of $B$ into $F_{0,1}^d(X)$. Clearly $0 \in B$ can not be mapped to
$0\in F_{0,1}^d(X)$, as the former $0$ is meet-reducible whereas the latter
one is not. So the least element $0 \in B$ must be mapped to an element $a\in
F^d(X)$ (recall that $F^d_{0,1}(X)=0 \oplus F^d(X) \oplus 1$ where $F^d(X)$
is the free distributive lattice on $X$, see \cite{Balbes-Dwinger}).
Likewise, the greatest element $01\in B$  must be mapped to an element $b\in
F^d(X)$. Now each element $x\in F^d(X)$ can be written (in a unique way,
called \emph{normal form} of $x$) as $\sum_{i\in I_x} U_i^x$ where $I_x$ is a
nonempty finite set, $\sum$ denotes join, each $U_i^x$ is a nonempty finite
set of distinct generators, and for simplicity we identify (as in clause
notation) such a finite set with the meet of its element (i.e., to be
explicit, $x=\sum_{i\in I_x} \prod U_i^x$); it can be also assumed that the
$U_i^x$ are all $\subseteq$-incomparable (if $U_i^x \subseteq U_j^x$, just
delete $j$ from $I_x$); moreover given $x=\sum_{i\in I_x} U_i^x$ and
$y=\sum_{i\in I_y} U_i^y$, we have
\[
x \le y \Leftrightarrow (\forall i\in I_x)(\exists j \in I_y)[U^y_j
\subseteq U^x_i].
\]
Now suppose that $a=x\wedge y$, and $b=x\vee y$: thus (representing $a,b,x,y$
in normal form) we have
\begin{align*}
\sum_{i\in I_a} U_i^a&=\sum_{(u,v)\in I_x\times I_y} (U^x_u\cup U^y_v),\\
\sum_{i\in I_b} U_i^b&=\sum_{u\in I_x} U^x_u + \sum_{v\in I_y}  U^y_v.
\end{align*}
By selecting the pairs $(u,v)$ for which $U^x_u\cup U^y_v$ is minimal under
inclusion (so that its join is maximal) we can pick a subset $J \subseteq
I_x\times I_y$ such that
\begin{equation}\label{eqn:1}
\sum_{i\in I_a} U_i^a=\sum_{(u,v)\in J} (U^x_u\cup U^y_v).
\end{equation}
and all the $U^x_u\cup U^y_v$, with $(u,v)\in J$, are
$\subseteq$-incomparable. Using $\ge$ of (\ref{eqn:1}) it follows that
\[
(\forall (u,v)\in J)(\exists i\in I_a)[U^a_i \subseteq U^x_u
\cup U^y_v];
\]
on the other hand, using $\le$ of (\ref{eqn:1}) we have
\[
(\forall i\in I_a) (\exists (u,v)\in J)[U^x_u \cup U^y_v
\subseteq U^a_i].
\]
It follows that
\[
(\forall (u,v)\in J)(\exists i\in I_a)(\exists (u',v')\in
J)[U^x_{u'} \cup U^y_{v'} \subseteq U^a_i \subseteq U^x_u \cup U^y_v]:
\]
by incomparability, for all $(u,v), (u',v') \in J$ it follows that  $U^x_{u'}
\cup U^y_{v'}=U^x_u \cup U^y_v$, and thus $U_u^x$ and $U_v^y$ are subsets of
$U=\bigcup_{i \in I_a} U^a_i$. Consider now the equality $b=\sum_{u\in I_x}
U^x_u + \sum_{v\in I_y} U^y_v$ and let us pick subsets $H \subseteq I_x$,
$K\subseteq I_y$ which select the sets $U^x_u$ and $V^y_v$ which are
$\subseteq$-minimal and $b=\sum_{u\in H} U^x_u + \sum_{v\in K} U^y_v$. By
arguing as before we can show that for every $u\in H$ and $k \in K$ we have
that $U^x_h, Y^y_k \subseteq V=\bigcup_{i \in I_b} U^b_i$. Finally, suppose
that some $U^x_i$ or some $U^y_j$ has been left out of the reduced family
that joins to $b$ (i.e. in the former case $i \notin H$ and the latter case
$j \notin K$): assume for instance that $i_0\in I_x\smallsetminus H$, for
some $i_0$. Then (by $\subseteq$-incomparability the sets $U^x_i$) there is
$j_0 \in I_y$ such that $V^y_{j_0}\subseteq U^x_{i_0}$; but then for the pair
$(i_0,j_0)$ we have $U^x_{i_0} \cup V^y_{j_0}= U^x_{i_0}$. If $(i_0,j_0) \in
J$ then we know that $U^x_{i_0} \subseteq U$; on the other hand if $(i_0,j_0)
\notin J$ then there exists a pair $(i_1,j_1)\in J$ such that $U^x_{i_1} \cup
U^y_{j_1} \subseteq U^x_{i_0}$, which by incomparability implies $i_0=i_1$
and thus we have that $U^x_{i_0} \subseteq U$, as $U^x_{i_1} \subseteq U$. If
instead for some $j_0$ we have $j_0\in I_y\smallsetminus K$ so that
$U^y_{j_0}$ has been left out of the reduced family that joins to $b$ then by
a similar argument we conclude that $U^y_{j_0} \subseteq U$. In conclusion,
for all $(u,v)\in I_x\times I_y$ we have that  we have $U^x_u \cup U^y_v
\subseteq U\cup V$. Thus there can be finitely only many pairs $x,y$ that
meet to $a$ and join to $b$, since the normal forms of $x,y$ may involve only
generators in $U\cup V$. But being infinite, the Boolean algebra $B$ has
infinitely many distinct pairs of elements that meet to $0$ and join to $1$.
Thus the images under the embedding of these pairs would provide an infinite
set of pairs of elements that in $F^d_{0,1}(X)$ meet to $a$ and join to $b$,
contradiction.
\end{proof}

\begin{rem}
Things are even more dramatic if we consider non-distributive pre-lattices.
Galvin and J\`onsson~\cite{Galvin-Jonsson-Canadian} prove that a distributive
lattice $L$ can be embedded in a free lattice if and only if $L$ is a
countable linear sum of lattices where: each lattice in the linear sum is
either a one element lattice, or an eight element Boolean algebra $2 \times 2
\times 2$, or a direct product of the two element chain with a countable
chain.

Therefore no c.e. distributive pre-lattice with quotient structure isomorphic
to $0 \oplus L \oplus 1$, where $L$ is not isomorphic with any of the
lattices characterized by Galvin and J\`onsson as above, can be reduced to
$L^{nd}_{01}$ (whose quotient structure is isomorphic to a lattice of the
form $0 \oplus F^{nd}(X) \oplus 1$ where $F^{nd}(X)$ is a free lattice) of
Example~\ref{ex:nondistr} preserving $\wedge$ and $\lor$.
\end{rem}

\section{Some applications}\label{sct:applications}
In this section we give some applications of
Theorem~\ref{thm:ei-lattices-universal} and Theorem~\ref{thm:unif-density}.
Our results on universality, local universality, and uniform density hold of
\emph{all} e.i. pre-lattices, which form a large enough class to include some
c.e. pre-structures of particular interest which have been widely studied and
for which some or all of these results have been proved already, and also
some interesting cases (listed in Section~\ref{ssct:new-applications}) where
these were not known. In Section~\ref{ssct:new-applications} we list some of
these new cases. In Section~\ref{ssct:reviewing} we will briefly review the
literature on what was already known about e.i. pre-structures, and
universality with respect to c.e. pre-orders, or even
$\mathcal{A}$-universality for various classes $\mathcal{A}$.

\subsection{New applications}\label{ssct:new-applications}
In the next three corollaries we point out three applications of our theorems
which, in their generality,  seem to have gone unnoticed so far.

\begin{cory}
If $L$ is a c.e. precomplete pre-lattice then $L$ is locally universal, i.e.\
in any non-trivial interval $[a,b]_L$ one can embed every c.e. pre-order.
\end{cory}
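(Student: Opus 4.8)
The plan is to recognize that this corollary is an immediate consequence of the implications recorded in Fact~\ref{fact:implications} together with the local universality of u.e.i.\ pre-lattices already established in Corollary~\ref{cor:locally-universal}. First I would unwind the hypothesis: by Definition~\ref{defn:ei-preorders-and-other-things}, saying that $L$ is a \emph{precomplete} pre-lattice means exactly that the associated ceer $\equiv_L$ is precomplete. Fact~\ref{fact:implications} then gives, for ceers, the chain precomplete $\Rightarrow$ u.f.p.\ $\Rightarrow$ u.e.i., so that $\equiv_L$ is u.e.i., i.e.\ $L$ is a u.e.i.\ c.e.\ pre-lattice.

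Once $L$ is seen to be u.e.i., the conclusion follows at once from Corollary~\ref{cor:locally-universal}. Indeed, that corollary states precisely that every u.e.i.\ c.e.\ pre-lattice is locally universal, and local universality is by definition the assertion that for every $a<_L b$ one can computably embed any c.e.\ pre-order into the interval $[a,b]_L$. For the reader's convenience I would also recall the underlying mechanism: u.e.i.-ness of $\equiv_L$ forces every pair of distinct equivalence classes $([a]_L,[b]_L)$ (with $a<_L b$) to be e.i., whereupon Corollary~\ref{cor:interval}---which in turn rests on the computable isomorphism between $[a,b]_L$ and an e.i.\ pre-lattice supplied by Lemma~\ref{lem:restriction}---yields the desired embedding.

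I expect no genuine obstacle here: the substantive work has already been carried out in the cited results, so the proof is essentially a composition of Fact~\ref{fact:implications} with Corollary~\ref{cor:locally-universal}. The only point meriting a remark is that a precomplete pre-lattice need not be bounded, but this causes no difficulty, since neither Corollary~\ref{cor:locally-universal} nor the underlying Corollary~\ref{cor:interval} assumes boundedness; both require only e.i.-ness of the relevant pairs of distinct equivalence classes, which u.e.i.-ness of $\equiv_L$ guarantees.
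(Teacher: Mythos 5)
Your proof is correct and follows essentially the same route as the paper, whose entire proof is an appeal to Corollary~\ref{cor:interval}: precompleteness of $\equiv_L$ gives u.e.i.-ness via Fact~\ref{fact:implications}, so every pair of distinct classes $([a]_L,[b]_L)$ is e.i.\ and the interval embedding follows. Your detour through Corollary~\ref{cor:locally-universal} and your remark on boundedness are harmless elaborations of the same argument.
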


\begin{proof}
By Corollary~\ref{cor:interval}.
\end{proof}

\begin{cory}\label{cor:sigman}
If $T$ is a consistent c.e. extension of $R$ or $Q$ then for every $n \ge 1$
the pre-lattice of sentences $L_{\Sigma_n/T}$ satisfies:
\begin{enumerate}
 \item  $L_{\Sigma_n/T}$ is locally universal;
      \item $L_{\Sigma_n/T}$ is uniformly dense.
\end{enumerate}
\end{cory}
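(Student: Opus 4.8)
The plan is to bootstrap from the effective inseparability already recorded for $L_{\Sigma_n/T}$ up to the stronger uniform effective inseparability, and then to feed this into the general machinery of the previous section. First I would invoke Theorem~\ref{thm:Qei}, which tells us that for every $n\ge 1$ and every consistent c.e.\ extension $T$ of $Q$ or $R$, the pre-lattice of sentences $L_{\Sigma_n/T}$ is an e.i.\ pre-lattice.

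The one genuine subtlety is that, by Definition~\ref{def:e.i.-pre-lattice}, being an e.i.\ pre-lattice only asserts effective inseparability of the single pair $([0]_{L},[1]_{L})$. Local universality, however, quantifies over \emph{all} pairs $a<_{L}b$, and uniform density likewise needs control over arbitrary distinct classes; so the weak hypothesis must be upgraded. I would close this gap using Theorem~\ref{thm:from-ei-to-ufp}: since $L_{\Sigma_n/T}$ is an e.i.\ pre-lattice, the associated ceer $\equiv_{L_{\Sigma_n/T}}$ is u.f.p.\,. By Fact~\ref{fact:implications}, u.f.p.\ implies u.e.i., and hence every pair of distinct $\equiv_{L_{\Sigma_n/T}}$-classes is effectively inseparable. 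In particular $L_{\Sigma_n/T}$ is a u.e.i.\ c.e.\ pre-lattice.

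With u.e.i.-ness established, both conclusions are immediate applications of the general results. For item~(1), Corollary~\ref{cor:locally-universal} states that every u.e.i.\ c.e.\ pre-lattice is locally universal; applying it to $L=L_{\Sigma_n/T}$ gives local universality. (Alternatively one could argue directly from Corollary~\ref{cor:interval}, since every pair of distinct classes being e.i.\ means each interval $[a,b]_{L}$ with $a<_{L}b$ is computably isomorphic to an e.i.\ pre-lattice and hence universal.) For item~(2), Theorem~\ref{thm:unif-density} states that every u.e.i.\ c.e.\ pre-lattice is uniformly dense, which applied to $L_{\Sigma_n/T}$ yields the second conclusion.

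I do not expect any real obstacle here: the corollary is essentially a specialization of the abstract theory to a concrete family of pre-lattices of sentences. The only point requiring care is the passage from the nominally weak ``e.i.\ pre-lattice'' hypothesis to the ``u.e.i.'' conclusion, and this is exactly the content of Theorem~\ref{thm:from-ei-to-ufp} combined with Fact~\ref{fact:implications}; everything after that is a direct citation of Corollary~\ref{cor:locally-universal} and Theorem~\ref{thm:unif-density}.
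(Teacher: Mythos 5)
Your proof is correct and follows essentially the same route as the paper's: Theorem~\ref{thm:Qei} gives e.i.-ness, and the two claims then follow from Corollary~\ref{cor:interval} (equivalently Corollary~\ref{cor:locally-universal}) and Theorem~\ref{thm:unif-density}. Your explicit upgrade from e.i.\ to u.e.i.\ via Theorem~\ref{thm:from-ei-to-ufp} and Fact~\ref{fact:implications} is exactly the step the paper leaves implicit, so your write-up is if anything slightly more complete.
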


\begin{proof}
Use Theorem~\ref{thm:Qei}: then Claim (1) comes from
Corollary~\ref{cor:interval}. Claim (2) comes from
Theorem~\ref{thm:unif-density}.
\end{proof}

\begin{cory}\label{cor:HA}
If $iT$ is a c.e. consistent intuitionistic theory extending $HA$ then the
Heyting pre-algebra $\IT$ of the Lindenbaum sentence algebra of $iT$
satisfies:
\begin{enumerate}
  \item $\IT$ is locally universal;
  \item $\IT$ is uniformly dense.
\end{enumerate}
\end{cory}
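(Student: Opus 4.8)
The plan is to observe that both claims are immediate consequences of the general theory of e.i. (equivalently, u.e.i.) c.e. pre-lattices developed earlier, so the whole argument reduces to citing the previous results in the correct order, exactly paralleling the proof of Corollary~\ref{cor:sigman}. First I would invoke Lemma~\ref{lem:ex-Heyting}, which establishes (via the G\"odel--Gentzen double-negation translation from $PA$ into $HA$) that $\IT$ is an e.i. Heyting pre-algebra; since a Heyting pre-algebra is in particular a c.e. bounded pre-lattice, $\IT$ is an e.i. pre-lattice in the sense of Definition~\ref{def:e.i.-pre-lattice}.

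The key step is to upgrade effective inseparability of the single pair $([0]_{\IT},[1]_{\IT})$ to uniform effective inseparability of the entire ceer $\equiv_{\IT}$. This is exactly Theorem~\ref{thm:from-ei-to-ufp}: being an e.i. c.e. pre-lattice forces $\equiv_{\IT}$ to be u.f.p., and hence u.e.i. by Fact~\ref{fact:implications}. I expect no genuine obstacle here beyond verifying that $\IT$ really satisfies the hypotheses of Theorem~\ref{thm:from-ei-to-ufp}, i.e.\ that it is a c.e. pre-lattice: its meet and join are the propositional connectives $\wedge^\gamma,\lor^\gamma$ transported through the G\"odel numbering, hence computable, and $\leq_{iT}$ is c.e. because $iT$ is a c.e. theory.

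With u.e.i.-ness in hand, claim (1) follows directly from Corollary~\ref{cor:locally-universal}, which states that every u.e.i. c.e. pre-lattice is locally universal; equivalently one may argue straight from Corollary~\ref{cor:interval}, since u.e.i.-ness guarantees that for \emph{every} pair $a<_{\IT}b$ the pair of equivalence classes $([a]_{\IT},[b]_{\IT})$ is e.i., so any c.e. pre-order embeds into the interval $[a,b]_{\IT}$. Claim (2) follows from Theorem~\ref{thm:unif-density}, which asserts that every u.e.i. c.e. pre-lattice is uniformly dense. Thus both parts are obtained simply by assembling Lemma~\ref{lem:ex-Heyting}, Theorem~\ref{thm:from-ei-to-ufp}, Corollary~\ref{cor:interval} (or Corollary~\ref{cor:locally-universal}), and Theorem~\ref{thm:unif-density}. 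The substantive mathematical work has already been done in those earlier results; the only thing that is genuinely specific to $HA$ here is the reduction of e.i.-ness to the well-known effective inseparability of the theorems and anti-theorems of $PA$, carried out once and for all in Lemma~\ref{lem:ex-Heyting}.
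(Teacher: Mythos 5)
Your proposal is correct and follows exactly the route the paper takes: Lemma~\ref{lem:ex-Heyting} shows $\IT$ is an e.i.\ pre-lattice, whence Theorem~\ref{thm:from-ei-to-ufp} and Fact~\ref{fact:implications} give u.e.i.-ness, and then Corollary~\ref{cor:interval} yields local universality and Theorem~\ref{thm:unif-density} yields uniform density. The paper's own proof is just the terser citation of these same two results, so there is nothing to add.
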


\begin{proof}
Claim (1) comes Corollary~\ref{cor:interval}. Claim (2) comes from
Theorem~\ref{thm:unif-density}.
\end{proof}

\subsection{Reviewing (some of) the existing literature}\label{ssct:reviewing}
For e.i. Boolean pre-algebras, embedding results via computable functions
yielding universality, were already known in the literature. We have already
seen (Theorem~\ref{thm:Boolean-universal}) that each e.i. Boolean pre-algebra
is $\mathcal{B}$-universal. This theorem alone is enough to establish local
universality of the e.i. Boolean pre-algebras: in fact there is no need to
directly use Corollary~\ref{cor:interval} as any non-trivial interval of an
e.i. Boolean pre-algebra is itself computably isomorphic (use
Lemma~\ref{lem:restriction} and the fact that any such interval is a c.e.
Boolean pre-algebra) to an e.i. Boolean pre-algebra, and thus any such
interval is $\mathcal{B}$-universal. Moreover, uniform density holds in any
e.i. Boolean pre-algebra, as all e.i. Boolean pre-algebras are computably
isomorphic (see Theorem~\ref{thm:Boolean-isomorphic}) and Shavrukov and
Visser~\cite{Shavrukov-Visser} show that uniform density holds in any $B_T$
of Example~\ref{ex:BT}.

Let us now move to c.e. precomplete pre-lattices and pre-lattices of
sentences. Uniform density for c.e. precomplete pre-lattices was proved by
Shavrukov and Visser in \cite{Shavrukov-Visser}. Now, if $T$ is a c.e.
consistent extension of elementary arithmetic $EA$ then by
Theorem~\ref{thm:EA-precomplete} $L_{\Sigma_n/T}$ is precomplete, and thus
uniformly dense. Moreover, it was already known that for such a $T$, and for
$n\ge 1$, $L_{\Sigma_n/T}$ is locally $\mathcal{L}^d_{0}$-universal, and
locally $\mathcal{L}^d_{0,1}$-universal if $n \ge 2$. Indeed
$\mathcal{L}^d_{0}$-universality of any $L_{\Sigma_n/T}$ for $n \ge 1$, and
$\mathcal{L}^d_{01}$-universality of any $L_{\Sigma_n/T}$ for $n \ge 2$ was
first proved in \cite{Montagna-Sorbi:Universal} where it is shown that the
reducing functions can in fact be taken of the form $x\mapsto
\gamma(\psi(\ol{x}))$ with $\psi(v)$ a $\Sigma_n$ formula, $\ol{x}$ the
numeral term corresponding to $x$, and $\gamma(\psi(\ol{x}))$ the G\"odel
number of $\psi(\ol{x})$ (under the $\gamma$ exploited in building the
pre-lattice of sentences).

Shavrukov~\cite{Shavrukov} proved that for $n\ge 1$ the c.e. Boolean
pre-algebra $B_{\Delta_{n}/T}$ (having as domain the $\Delta_n$-sentences,
and regarded as a c.e. Boolean pre-algebra by Remark~\ref{rem:more-general})
is an e.i. Boolean pre-algebra unless $n=1$ and $T$ is $\Sigma_{1}$-sound;
and if $n \ge 1$, $\alpha, \beta$ are $\Sigma_n$ sentences, $\vdash_{T}
\alpha \rightarrow \beta$, and $\not \vdash_{T} \beta \rightarrow \alpha$,
then $[\alpha, \beta]_{\Delta_{n}/T}$ (i.e. the $\Delta^0_n$ sentences in the
interval) is an e.i. Boolean pre-algebra unless $n=1$, $T$ is
$\Sigma_{1}$-sound, and $T\vdash \beta$. $\mathcal{A}$-universality of
$L_{\Sigma_n/T}$ (for the relevant class $\mathcal{A}$) finally follows by
$\mathcal{B}$-universality of the e.i. Boolean pre-algebras
(Theorem~\ref{thm:Boolean-universal}) and Lemma~\ref{lem:universal classes}.

\section{Computable isomorphism types of universal c.e. pre-orders}
It would be interesting to characterize ``natural'' classes of universal
pre-structures falling in a single computable isomorphism type, as happens
for instance for the e.i. Boolean pre-algebras:

\begin{thm}[(\cite{Montagna-Sorbi:Universal, Nerode-Remmel:Survey}, based on
	\cite{Pour-El-Kripke})] \label{thm:Boolean-isomorphic}
All e.i. Boolean pre-algebras are computably isomorphic.
\end{thm}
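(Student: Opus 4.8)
The plan is to reduce the statement to an effective back-and-forth construction, whose two key inputs are (i) a structural fact---that the quotient Boolean algebra of any e.i.\ Boolean pre-algebra is \emph{atomless}---and (ii) the effective machinery already developed in the paper, namely uniform finite precompleteness (Theorem~\ref{thm:from-ei-to-ufp}) and uniform density (Theorem~\ref{thm:unif-density}).

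First I would prove that if $B$ is an e.i.\ Boolean pre-algebra then $B/{\equiv_B}$ is atomless. Suppose $[\alpha]$ were an atom with $0 <_B \alpha$. Since $\alpha$ is an atom of the quotient, for every $x$ exactly one of $\alpha \leq_B x$ or $x \wedge \alpha \equiv_B 0$ holds, so the two c.e.\ sets $\{x : \alpha \leq_B x\}$ and $\{x : x \wedge \alpha \equiv_B 0\}$ partition $\omega$. Each is therefore computable, and the first contains $[1]_B$ while the second contains $[0]_B$; this computably separates the pair $([0]_B,[1]_B)$, contradicting effective inseparability (Definition~\ref{def:e.i.-pre-lattice}). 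Hence the quotient is atomless, so by Cantor's theorem any two such quotients are abstractly isomorphic; the entire content of the theorem is to produce the isomorphism \emph{computably} in the sense of Definition~\ref{defn:isomorphism}.

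Given e.i.\ Boolean pre-algebras $A$ and $B$, I would build $f$ reducing $\leq_A$ to $\leq_B$ and inducing a quotient isomorphism by an effective back-and-forth. At each stage I maintain a finite injective Boolean-algebra homomorphism between the subalgebras generated by finitely many processed elements, presented by representatives; the crucial invariant is that the images of the atoms of the current domain-subalgebra are \emph{genuinely} non-$0$ $B$-classes (with strictness honestly guaranteed, not merely not-yet-refuted). In the \emph{forth} step I adjoin a new element $a$ of $A$; it refines each old atom $\alpha$, and when $\alpha$ splits I must split its image $c = f(\alpha)$ into two nonzero complementary pieces with join $c$. Here atomlessness guarantees a split exists, and uniform density of $B$ (Theorem~\ref{thm:unif-density}) applied to the interval $[0,c]_{\leq_B}$---legitimate because the invariant gives $0 <_B c$ honestly---produces an interior point $d$ with $0 <_B d <_B c$, whence $c_0 = d$ and $c_1 = c \wedge \neg d$ are the required nonzero halves. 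The \emph{back} step is symmetric, adjoining elements of $B$ and splitting in $A$ by uniform density of $A$ (which is likewise u.e.i., hence uniformly dense), thereby securing surjectivity onto $\equiv_B$-classes. Running both directions over all of $\omega$ yields a total $f$ inducing a bijective, operation-preserving map of quotients.

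The main obstacle is reconciling these finite commitments with the fact that $\leq_A$ and $\leq_B$ are only c.e.: comparabilities and collapses $a \equiv_A a'$ are enumerated over time, so the target class I compute for $f(a)$ when I first process $a$ may have to be revised once later stages reveal new relations in $A$. This is exactly the difficulty resolved by uniform finite precompleteness together with the Recursion Theorem, as used throughout the paper (compare the proofs of Theorems~\ref{thm:ei-lattices-universal} and~\ref{thm:unif-density}): I would present $f$ through a computable list of indices controlled by the Recursion Theorem, let each $f(a)$ be the eventual value of a controlled computation, and use the totalizer of Theorem~\ref{thm:from-ei-to-ufp}, fed the finite set of current representatives, to force that value into the prescribed class after each revision. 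The speedup convention of the earlier proofs then lets me treat each commitment as taking effect immediately, so that in the limit $f$ is well-defined on $\equiv_A$-classes, reflects $\leq_A$ into $\leq_B$, and preserves $\wedge,\lor,\neg,0,1$; the bookkeeping that keeps the forth and back requirements from clobbering one another's commitments---mild here, since the moves only ever refine the partial isomorphism---completes the verification.
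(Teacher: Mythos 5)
The paper itself gives no proof of Theorem~\ref{thm:Boolean-isomorphic}; it is quoted from \cite{Montagna-Sorbi:Universal,Nerode-Remmel:Survey} and ultimately \cite{Pour-El-Kripke}. Your overall architecture (atomless quotients plus an effective back-and-forth) is the right one, and your atomlessness argument is correct (it in fact needs only computable inseparability of $([0]_B,[1]_B)$). But there is a genuine gap at the heart of the forth step. When you adjoin $a$ and look at an atom $\alpha$ of the current finite subalgebra with image $c$, the three alternatives $a\wedge\alpha\equiv_A 0$, $\alpha\leq_A a$, and ``$a$ properly splits $\alpha$'' are not effectively distinguishable: the first two are c.e.\ events, while the third is their joint failure and is never positively confirmed. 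So ``when $\alpha$ splits I split its image'' is not an instruction you can execute. The tools you invoke do not repair this. Uniform density (Theorem~\ref{thm:unif-density}) produces an \emph{unconditional} interior point of $[0,c]_{\leq_B}$, whereas what you need is an element $d\in[0,c]_{\leq_B}$ whose class \emph{tracks} the undecidable status of $a\wedge\alpha$. And the totalizer of Theorem~\ref{thm:from-ei-to-ufp} only guarantees $k(D,e,x)\equiv_B\phi_e(x)$ when $\phi_e(x)$ \emph{converges} to something equivalent to a member of $D$; in the generic case where $a$ properly splits $\alpha$ no confirming event ever occurs, $\phi_e(x)$ diverges, and the value of $k(D,e,x)$ is completely uncontrolled --- nothing keeps it out of $[0]_B$ or $[c]_B$ --- so your invariant that images of atoms are ``genuinely non-$0$, honestly guaranteed'' cannot be maintained.

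The missing idea, which is exactly the Pour-El--Kripke mechanism and is already present in the paper in Observation~\ref{obs:incomparability} and Lemma~\ref{lem:ei-restriction}, is to use a productive function $p$ for the e.i.\ pair $([0]_B,[c]_B)$ (available for every $0<_B c$ since $B$ is u.e.i.\ by Theorem~\ref{thm:from-ei-to-ufp} and Fact~\ref{fact:implications}, and relativizable to the interval by Lemma~\ref{lem:ei-restriction}) together with the Recursion Theorem: control c.e.\ sets $W_u\supseteq[0]_B$ and $W_v\supseteq[c]_B$, set $d=(0\vee p(u,v))\wedge c$, enumerate $p(u,v)$ into $W_v$ if you ever see $a\wedge\alpha\equiv_A 0$ (forcing $d\equiv_B 0$) and into $W_u$ if you ever see $\alpha\leq_A a$ (forcing $d\equiv_B c$); if neither event occurs, productivity guarantees $0<_B d<_B c$ \emph{by default}. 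This conditional element is what makes the back-and-forth go through; atomlessness and density then come out as corollaries rather than being inputs. With that replacement (and the symmetric device in the back steps, plus the bookkeeping needed to keep $f$ well defined on $\equiv_A$-classes), your outline becomes the standard proof.
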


Some natural computable isomorphism types have been pointed out for
universal ceers. We recall that a \emph{diagonal} function for an equivalence
relation $R$ is a computable function $\delta$ such that $x \cancel{\rel{R}}
\delta(x)$ for all $x$. The following are known: All u.f.p. ceers endowed
with a diagonal function are computably isomorphic, see \cite{Montagna:ufp}
(and independently  \cite{Lachlan}); all precomplete ceers are computably
isomorphic, see \cite{Lachlan}.

Nonetheless, there are several interesting different computable isomorphism
types of universal c.e. pre-orderings, even if we restrict attention to the
pre-orderings relative to e.i. (hence bounded by definition) pre-structures.

\begin{thm}\label{thm:several}
The following list provides classes of e.i. pre-structures such that for each
pair $(\mathcal{A}, \mathcal{B})$ of distinct classes (i.e., (1), (2a), (2b)
(2c), (3))) in the list there is a pair of objects $(A,B) \in (\mathcal{A}
\times\mathcal{B})$, with $A,B$ not computably isomorphic.

\begin{enumerate}
  \item e.i. Boolean pre-algebras;
  \item e.i. distributive pre-lattices; inside this class, one can further
      list the following subclasses:
  \begin{enumerate}
    \item\label{it:Heyting} e.i. Heyting pre-algebras;
    \item\label{it:free} e.i. free distributive pre-lattices on a
        countably infinite set of generators;
    \item\label{it:precompl-latt-sentences} c.e. precomplete distributive
        pre-lattices, including pre-lattices of sentences of consistent
        extensions of Robinson's $Q$ or $R$;
  \end{enumerate}
  \item e.i. non-distributive pre-lattices.
\end{enumerate}
\end{thm}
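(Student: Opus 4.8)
The plan is to reduce the whole statement to a question about the associated quotient \emph{bounded lattices}. By Definition~\ref{defn:isomorphism} a computable isomorphism between two pre-structures induces an isomorphism of their quotient structures, and in particular an isomorphism of the underlying quotient bounded lattices. Hence it suffices, for each of the ten unordered pairs of distinct classes, to exhibit representatives whose quotient bounded lattices are non-isomorphic \emph{as bounded lattices}; I would do this by producing order-theoretic (hence lattice-isomorphism-invariant) properties that tell the chosen representatives apart. Note that density gives no help here: since every e.i.\ pre-lattice is uniformly dense (Theorem~\ref{thm:unif-density}), all five quotients are countable dense bounded lattices, so only finer invariants can separate them.

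Two coarse invariants dispose of seven of the ten pairs. First, \emph{distributivity} separates class (3) from each of (1), (2a), (2b), (2c): the representative $L^{nd}_{01}$ of Example~\ref{ex:nondistr} has a non-distributive quotient, whereas the quotients of the other four classes are distributive, and distributivity is a lattice invariant. Second, \emph{complementation} separates (1) from (2a), (2b), (2c): an e.i.\ Boolean pre-algebra has a complemented (Boolean) quotient, while the three distributive classes admit non-complemented representatives. For (2b) this is immediate, since in the free bounded distributive lattice $F^d_{0,1}(X)$ of Example~\ref{ex:eidistr} a generator $x_i$ has no complement (indeed $a\wedge z=0$ forces $z=0$, so $0,1$ are the only complemented elements). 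For (2a) take $\mathrm{i}(\HA)$ (Lemma~\ref{lem:ex-Heyting} with $iT=\HA$): the quotient is complemented only if $\vdash_{\HA}\sigma\vee\neg\sigma$ for every $\sigma$, which fails. For (2c) take $L_{\Sigma_n/T}$ with $T\supseteq EA$ (precomplete by Theorem~\ref{thm:EA-precomplete}): a complement of $[\sigma]$ would be a $\Sigma_n$ sentence $T$-equivalent to $\neg\sigma$, so any $\Sigma_n$ sentence $\sigma$ whose negation is not $T$-equivalent to a $\Sigma_n$ sentence is not complemented, and such $\sigma$ exist since the hierarchy is proper over $T$.

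The remaining three pairs, all inside the distributive non-complemented world, are the heart of the matter, and I would separate them with two finer order-definable invariants. The first is \emph{join-irreducibility of the top} ($1=a\vee b\Rightarrow a=1$ or $b=1$). For (2c) I choose $L_{\Sigma_n/T}$ with $n\ge 2$ and $T$ consistent, and let $\pi$ be an independent $\Pi_1$ sentence (e.g.\ a Rosser sentence): then $\pi,\neg\pi\in\Sigma_n$ and $\vdash_T\pi\vee\neg\pi$ while neither is provable, so $1$ is join-\emph{reducible}. By contrast $1$ is join-\emph{irreducible} in $\mathrm{i}(\HA)$ (this is exactly the disjunction property of $\HA$) and in $F^d_{0,1}(X)$ (since $F^d_{0,1}(X)=0\oplus F^d(X)\oplus 1$ with $F^d(X)$ closed under $\vee$, a join of two elements below $1$ stays below $1$); this separates (2c) from both (2a) and (2b). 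The last pair, (2a) versus (2b), I would distinguish by the \emph{Boolean algebra of regular elements}, i.e.\ the range of the order-definable (hence isomorphism-invariant) pseudocomplement $a\mapsto\neg a=\max\{z:a\wedge z=0\}$. In $F^d_{0,1}(X)$ one computes $\neg a=0$ for every $a\ne 0$ and $\neg 0=1$, so $0,1$ are the only regular elements; whereas in $\mathrm{i}(\HA)$ the double-negation translation embeds the infinite Lindenbaum algebra of $PA$ into the regular elements, giving infinitely many.

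The main obstacle is precisely this last layer: since (2a), (2b), (2c) are all countable dense distributive lattices, coarse invariants are unavailable and the separation must rest on delicate order-theoretic features. The two points needing genuine care are (i) verifying that $F^d_{0,1}(X)$ is a Heyting algebra whose pseudocomplement collapses to $\{0,1\}$, so that the regular-element count is a legitimate lattice invariant distinguishing it from $\mathrm{i}(\HA)$; and (ii) checking join-reducibility of the top in $L_{\Sigma_n/T}$, which forces the choice $n\ge 2$ together with a genuinely independent sentence. By contrast, the seven pairs handled by distributivity and complementation become routine once the reduction to quotient bounded lattices is in place.
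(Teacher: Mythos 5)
Your proposal is correct, but it takes a genuinely different route from the paper on the pairs that matter. The paper also reduces the easy pairs to structural properties of the quotient lattices (distributivity, Boolean vs.\ free, and meet-irreducibility of the bottom in a free bounded distributive lattice versus its meet-reducibility in the Lindenbaum algebra of $HA$ --- the order-dual of your join-irreducibility-of-the-top argument). But for the key separation of the precomplete pre-lattices (2c) from the Heyting pre-algebra of $HA$ in (2a), the paper leaves the quotient lattices entirely and uses a computability-theoretic invariant of the pre-structure itself: precomplete ceers admit no diagonal function by Ershov's fixed point theorem, whereas $\sigma\mapsto\neg\sigma$ is a diagonal function for the Lindenbaum Heyting pre-algebra of $HA$ (and for any e.i.\ Boolean pre-algebra). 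You instead stay inside lattice-isomorphism invariants of the quotients throughout, separating (2a) from (2c) by join-irreducibility of the top (the disjunction property of $HA$ against a Rosser sentence witnessing join-reducibility in $L_{\Sigma_n/T}$ for $n\ge 2$), and (2a) from (2b) by counting regular elements under pseudocomplementation. Each approach buys something: yours is uniform and purely order-theoretic, needing no recursion-theoretic fixed points, at the price of importing the disjunction property, Rosser's theorem, faithfulness of the double-negation translation, and --- for the complementation argument against (2c) --- the standard but not entirely free fact that some $\Pi_n$ sentence is not $T$-provably $\Sigma_n$ (for the theorem as stated you may as well fix $n\ge 2$ there too, where this is classical); the paper's diagonal-function invariant is coarser to state and would even distinguish pre-structures whose quotient lattices happened to be isomorphic, which no quotient-level invariant can do. Both arguments establish the theorem; no gap in yours.
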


\begin{proof}
Of course distinctions between some of the classes derive immediately from
evident structural differences (for instance a free distributive lattice can
not be isomorphic with a Boolean algebra, and so on) which immediately imply
that for a given pair $(A,B) \in (\mathcal{A} \times\mathcal{B})$ of
pre-structures no isomorphism may exist  between the corresponding quotient
structures, and thus no computable isomorphism may exist between the
pre-structures. Apart from these obvious considerations, the proof follows
from previous results proved in this paper or recalled in this paper from the
literature. We list here some of the distinguishing properties. The c.e.
precomplete pre-lattices do not have a diagonal function (this is a
consequence of the celebrated Ershov Fixed Point Theorem for precomplete
equivalence relations). If $L$ is the Heyting pre-algebra of the Lindenbaum
sentence algebra of $HA$ then $L$ is e.i. and has a diagonal function, so no
computably isomorphic copy of it can lie in \ref{it:precompl-latt-sentences},
and can not lie in \ref{it:free} either since in every free bounded
distributive lattice the least element is meet-irreducible, but this does not
hold in the Lindenbaum sentence algebra of $HA$. For the same reason, if $n
\ge 1$ and $T$ is a c.e. consistent extension of $EA$ then no isomorphic copy
of $L_{\Sigma_n/T}$ can lie in (2b).
\end{proof}

One must not think that each of the classes displayed in
Theorem~\ref{thm:several} contains only one computable isomorphism class. For
instance, as already remarked, if $T$ is $\Sigma_1$-sound then the
precomplete distributive pre-lattice $L_{\Sigma_1/T}$  is not isomorphic to
any $L_{\Sigma_n/T}$, for $n \ge 2$.

E.i. Boolean pre-algebras are free on infinitely many generators, since so is
$B_T$ of Example~\ref{ex:BT}. However, even adding freeness, the analogue of
Theorem~\ref{thm:Boolean-isomorphic} for e.i. distributive pre-lattices
fails, as shown in next theorem.

\begin{thm}\label{thm:ei-prelattices-not-isomorphic}
There are e.i. distributive pre-lattices that are free on countably
infinitely many generators but are not computably isomorphic.
\end{thm}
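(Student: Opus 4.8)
The plan is to realize both structures as instances of the construction of Example~\ref{ex:eidistr}. Fix a computable copy $F^d_{0,1}(X)$ of the free bounded distributive lattice on the computably listed generators $\{x_i:i\in\omega\}$, choose two effectively inseparable pairs $(U_1,V_1)$ and $(U_2,V_2)$, and let $L_t$ ($t=1,2$) be the pre-lattice obtained by forcing $x_i\equiv_{L_t}0$ for $i\in U_t$ and $x_i\equiv_{L_t}1$ for $i\in V_t$. As in Example~\ref{ex:eidistr}, each $L_t$ is an e.i.\ distributive pre-lattice whose quotient is free bounded distributive on the surviving generators $\{x_i:i\notin U_t\cup V_t\}$; since the union of an e.i.\ pair is coinfinite, each is free on countably infinitely many generators. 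It then remains to choose the two pairs so that $L_1$ and $L_2$ are not computably isomorphic.

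The first thing I would record is that all the \emph{coarse} invariants are useless here, so the distinction must be read off the lattice operations rather than the order alone. By Theorem~\ref{thm:from-ei-to-ufp} each $\equiv_{L_t}$ is u.f.p., and because $U_t\cup V_t$ is the union of an e.i.\ pair its complement is productive; hence one can effectively extract an infinite \emph{computable} set of surviving generators, and using two fresh such generators $x_p,x_q$ the map $z\mapsto x_p\wedge x_q$ is a diagonal function for $\equiv_{L_t}$. Thus both ceers are u.f.p.\ with a diagonal function, so they are computably isomorphic \emph{as ceers}. Moreover $[0]_{L_t}$, $[1]_{L_t}$ and $[0]_{L_t}\cup[1]_{L_t}$ are all creative (the last because $\overline{[0]_{L_t}\cup[1]_{L_t}}$ is productive, as $([0]_{L_t},[1]_{L_t})$ is an e.i.\ pair), so every many-one degree naturally attached to the top and bottom coincides with that of the halting set $K$. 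The separating invariant I would use is instead the set $\mathrm{Gen}(L_t)=\{z:[z]_{L_t}\text{ is a maximal join-irreducible}\}$, i.e.\ the set of representatives of the generator classes. Being a maximal join-irreducible is a property of the quotient lattice, so a computable isomorphism $f\colon L_1\to L_2$ satisfies $\mathrm{Gen}(L_1)=f^{-1}(\mathrm{Gen}(L_2))$, and since by Remark~\ref{rem:more-general} such an isomorphism may be taken to be a computable permutation, transporting $\mathrm{Gen}$ in both directions gives $\mathrm{Gen}(L_1)\equiv_m\mathrm{Gen}(L_2)$. It therefore suffices to arrange $\mathrm{Gen}(L_1)\not\equiv_m\mathrm{Gen}(L_2)$.

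The main obstacle is exactly the control of the many-one degree of $\mathrm{Gen}(L_t)$. One always has $\overline{U_t\cup V_t}\le_m\mathrm{Gen}(L_t)$ via $i\mapsto x_i$, and recognising a generator is decidable relative to $U_t\oplus V_t$, so $\mathrm{Gen}(L_t)\le_T\emptyset'$; the difficulty is that these two bounds leave the exact degree (a $\Delta^0_2$ degree above $\overline{K}$) undetermined, and the naive reductions do not pin it down. I would take $(U_1,V_1)$ ``transparent'' (for instance built on the odd numbers, so that every even generator survives and the fate of a normal form is as easy to read off as possible) so that $\mathrm{Gen}(L_1)$ sits as low as such a set can, and then build $(U_2,V_2)$ by a direct stage-by-stage construction that \emph{entangles} $U_2$ and $V_2$ — keeping a fixed e.i.\ pair inside $(U_2,V_2)$ to preserve effective inseparability, and only ever collapsing generators so that the quotient stays free distributive — designed so that deciding whether a given normal form reduces to a single surviving generator is strictly harder, pushing $\mathrm{Gen}(L_2)$ into a strictly higher many-one degree. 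Equivalently, one can run the separation as a diagonalization: against the $e$-th partial computable $\phi_e$, pick a generator $g_e$ of $L_1$, wait for $\phi_e(g_e)$ to converge, and (if its $L_2$-value reduces to a single surviving generator $x_k$) collapse $x_k$ to $0$, so that $\phi_e$ carries a generator class to a non-generator class and hence fails to be an isomorphism. In either formulation the routine part is the verification, via Example~\ref{ex:eidistr}, that the object built remains e.i.\ and free distributive on infinitely many generators; the genuine content is the reservation/priority bookkeeping that keeps infinitely many generators surviving while the collapses needed for the separation are carried out.
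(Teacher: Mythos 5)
Your second formulation is the right kind of idea and is in fact close in spirit to the paper's proof: both arguments diagonalize against each $\phi_e$ by exploiting an order-theoretic property of equivalence classes that any isomorphism of the quotients must preserve (you use ``being a generator class,'' i.e.\ a maximal join-irreducible; the paper uses meet-(ir)reducibility). But the concrete action you propose --- when $\phi_e(g_e)$ reduces to a surviving generator $x_k$ of $L_2$, collapse $x_k$ to $0$ --- does not work as stated, for two reasons. First, consistency: you keep a fixed e.i.\ pair $(U,V)$ inside $(U_2,V_2)$, and $V$ is only c.e., so at the stage you act you cannot exclude that $k\in V$ has simply not yet been enumerated; if it later appears, you have forced $0\equiv_{L_2} x_k\equiv_{L_2}1$ and the whole lattice collapses. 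Second, freeness: you have no control over which generator $\phi_e(g_e)$ lands on (e.g.\ $\phi_e(g_e)$ could be $x_e$ for every $e$), so the requirements can collectively collapse cofinitely many generators of $L_2$, destroying the property that $L_2$ is free on \emph{infinitely} many generators; no amount of reservation bookkeeping fixes this, because the reservation would have to constrain the values of the $\phi_e$, which it cannot. Your first route (separating the $m$-degrees of the sets $\mathrm{Gen}(L_t)$) is, by your own account, not carried out, so it cannot stand in for the missing argument.

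The paper's proof avoids both problems by inverting the direction of the modification: $L_2=L^d_{01}$ is left completely fixed, and the diagonalization acts only on $L_1$. The generators of $L_1$ are split into an infinite reserved set $Y$ (never touched by the diagonalization, and carrying the entire e.i.\ coding, so consistency and freeness on infinitely many generators are automatic) and sets $Z_1,Z_2$; requirement $R_i$ owns the generator $z^1_i\in Z_1$ and watches $\phi_i(z^1_i)$. If its value $x$ has class $0$, $1$, or meet-irreducible in the stage-$s$ approximation to $L_2$, the requirement equates $z^1_i$ with $z^2_{k_1}\wedge z^2_{k_2}$ for two fresh generators from $Z_2$, making $[z^1_i]$ meet-reducible in $L_1$ while $[x]$ stays non-meet-reducible in $L_2$; otherwise $z^1_i$ remains a generator, hence meet-irreducible, while $[x]$ stays meet-reducible. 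The point that makes this one-sided reading of the \emph{approximation} to $L_2$ sound is that meet-reducibility of $[x]_\alpha$ in the limit implies meet-reducibility at every stage, so seeing non-meet-reducibility at a stage already determines the final answer. If you want to salvage your version, the natural repair --- replace ``collapse $x_k$ to $0$'' by ``equate $x_k$ with the meet of two fresh generators'' --- removes the $0\equiv 1$ danger and essentially reproduces the paper's mechanism, but you would still need to handle the interaction with the e.i.\ coding and the fact that generator-hood of $[\phi_e(g_e)]$ can change as $U_2,V_2$ grow; keeping the target lattice static, as the paper does, is what makes the bookkeeping disappear.
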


\begin{proof}
We construct two e.i. bounded distributive pre-lattices $L_1, L_2$, each of
which is free on a countably infinite sets of generators, and such that for
every $i$, the \emph{requirement $R_i$} is satisfied, i.e. the partial
computable function $\phi_i$ does not induce an isomorphism between the
quotient structures associated with $L_1$ and $L_2$. We take $L_2$ to be the
pre-lattice $L^d_{01}$ of Example~\ref{ex:eidistr}, but this time we show how
to build this pre-lattice and the congruence $\alpha$ via a construction in
stages, which we then synchronize with (part of) the construction of $L_1$.

As in the discussion in Section~\ref{sssct:eidistr} consider a computable
presentation $F^d_{0,1}(X)=\langle \omega, \wedge, \lor, 0,1\rangle$ of the
free bounded distributive lattice on a decidable infinite set $X$ (with $0,1
\notin X$) which is computably listed without repetitions by $\{x_i: i \in
\omega\}$, and let $(U,V)$ an e.i. pair of c.e. sets.

We start with specifying a construction of $L_2=L^d_{01}$ in stages. Assume
that $\{U_s: s\in \omega\}, \{V_s: s\in \omega\}$ are computable
approximations to $U,V$ respectively, i.e. computable sequences of finite
sets giving $U=\bigcup_s U_s$ and $V=\bigcup_s V_s$. Let $X_s=\{x_i: i <s\}$,
and let $\alpha_{s}$ be the congruence on  $F^d_{01}(X_s)$ generated by the
pairs $\{(x_i,0): x_i \in X_s \,\&\, i \in U_s\} \cup \{(x_i,1): x_i \in X_s
\,\&\, i \in V_s\}$. It is easy to see (First Isomorphism Theorem) that
${F^d_{01}(X_s)}_{/\alpha_s} \simeq F^d_{01}(X_s\smallsetminus \{x_i: i \in
U_s\cup V_s\})$, and if $\alpha$ is the congruence on $F^d_{01}(X)$ exhibited
in Section~\ref{sssct:eidistr} then $\alpha=\bigcup_s \alpha_{s}$. Taking
$L_2=L^d_{01}$ of Example~\ref{ex:eidistr} we know that $L_2$ is e.i.\,,
${L_2}_{/\alpha}={F^d_{01}(X)}_{/\alpha}\simeq F^d_{01}(X\smallsetminus
\{x_i: i\in U\cup V\})$, and $L_2$ is free on a countably infinite set of
generators. We use the sequence $\{{F^d_{01}(X_s)}_{/\alpha_s}: s \in
\omega\}$ to approximate $F^d_{01}(X)_{/\alpha}$ in the construction of
$L_1$.

We now specify how to build $L_1$. Split $X$ as $X=Y\cup Z$, with $Y,Z$
decidable and infinite. Split also $Z$ into two infinite decidable sets $Z_1,
Z_2$ so that $Z_1$ is computably enumerated without repetitions by $\{z_i^1:
i\in \omega\}$.

We define in stages sequences $\{X_{1,s}: s\in \omega\}$, $\{H_s: s\in
\omega\}$, $\{\beta_s: s\in \omega\}$, where $H_s \subseteq X_{1,s}$,
$X_{1,s}$ is a finite subset of $X$, $\beta_s$ is a congruence on
$F^d_{0,1}(X_{1,s})$; moreover let $G_s=X_{1,s}\smallsetminus H_s$. Recall
that in a bounded lattice an element $x$ is meet-irreducible if $x \ne 0,1$
and is not the meet of any two strictly bigger elements.

\medskip

\paragraph{Stage $0$}
Let $X_{1,0}=H_0=\emptyset$ so that $F^d_{0,1}(X_{1,0})$ is the two-element
bounded lattice, and let $\beta_0$ be the identity equivalence relation on
$\{0,1\}$.
\medskip

\paragraph{Stage $s+1$, say $s=\langle i, t\rangle$}
If $R_i$ has already acted, or $z^1_i \notin X_{1,s}$, or $\phi_i(z^1_i)$ has
not as yet converged in fewer than $s$ steps to an $x \in F^d_{0,1}(X_s)$, or
$\phi_i(z^1_i)$ has already converged to an $x \in F^d_{0,1}(X_s)$, such that
$[x]_{\alpha_{s}}$ is meet-reducible in ${F^d_{0,1}(X_s)}_{/\alpha_{s}}$,
then let $X_{1,s+1}=X_{1,s} \cup \{x_s\}$, $H_{s+1}=H_s$, and let
$\beta_{s+1}$ be the congruence on $F^d_{0,1}(X_{1,s+1})$ generated by the
pairs in $\beta_s$. Otherwise, $R_i$ \emph{acts} by choosing the least two
distinct elements $z^2_{k_1}, z^2_{k_2}\in Z_2$ not as yet used at any stage
of the construction; let $X_{1,s+1}= X_{1,s} \cup \{x_s, z^2_{k_1},
z^2_{k_2}\}$, let $\beta_{s+1}$ be the congruence on $F^d_{0,1}(X_{1,s+1})$
generated by $\beta_{s}$ plus the pair $(z^1_i,z^2_{k_1} \wedge z^2_{k_2})$;
let $H_{s+1}=H_{s} \cup \{z^1_j\}$.

\medskip

The following can be easily checked. If $H=\bigcup_s H_s$ and
$G=X\smallsetminus H$ then $G=\{g:(\exists t)(\forall s \ge t)[g \in G_s]\}$
and is comprised of all elements $g \in X$ such that $g$ is not collapsed at
some stage to the meet of two elements of $Z_2$ by the c.e. congruence
$\beta$ on $F^d_{0,1}(X)$ where $\beta=\bigcup_s \beta_s$; moreover $Y
\subseteq G$ as $Y\cap H=\emptyset$. Let now $\alpha_1$ be the c.e.
congruence on $F^d_{0,1}(X)$ generated by the pairs in $\beta \cup \{(y_i,0):
i \in U\} \cup \{(y_i,1): i \in V\}$.  On numbers $x,y$ define $x \leq_{L_1}
y$ if $[x]_{\alpha_1}= [x \wedge y]_{\alpha_1}$. Then $L_1=\langle \omega,
\wedge, \lor, 0,1, \leq_{L_1}\rangle$, where $\wedge, \lor$ are the same
operations as in $F^d_{0,1}(X)$, is a c.e. pre-lattice whose associated
quotient structure is $F^d_{0,1}(X)_{/\alpha_1}$. By an argument similar to
the one used for $\alpha$ and $L^d_{01}$ in Section~\ref{sssct:eidistr}, one
can show that $L_1$ is e.i. and ${F^d_{0,1}(X)}_{/\alpha_1} \simeq
F^d_{0,1}(G\smallsetminus \{y_i: i\in U\cup V\})$ so that $L_1$ is free on a
countably infinite set of generators.

It remains to show that for every $i$, $\phi_i$ does not induce an
isomorphism from ${L_1}_{/\alpha_1}(=F^d_{0,1}(X)_{/\alpha_1})$ to
${L_2}_{/\alpha}(=F^d_{0,1}(X)_{/\alpha})$. Assume $\phi_i(z^1_i)$ converges
and $\phi_i(z_i^1)=x$: if $[x]_{\alpha}\in \{[0]_{\alpha}, [1]_{\alpha}\}$ or
$[x]_{\alpha}$ is meet-irreducible then at some stage in the construction of
$\beta$  we introduce a pair $z^2_{k_1}, z^2_{k_2}$, and make
$[z^1_i]_{\alpha_1}$ meet-reducible since we make $[z^1_i]_{\alpha_1}=
[z^2_{k_1}]_{\alpha_1} \wedge [z^2_{k_2}]_{\alpha_1}$, and on the other hand
$[z^2_{k_1}]_{\alpha_1}, [z^2_{k_2}]_{\alpha_1}$ are incomparable as under
the isomorphism ${F^d_{0,1}}(X)_{/\alpha_1} \simeq F^d_{0,1}(G\smallsetminus
\{y_i: i \in U\cup V\})$ they correspond to two distinct generators of
$F^d_{0,1}(G\smallsetminus \{y_i: i \in U\cup V\})$. Similarly if
$[x]_{\alpha}$ is meet-reducible then for every $s$ with $x \in
F^d_{01}(X_s)$ we have that $[x]_{\alpha_{s}}$ is meet-reducible, thus
$[z^1_i]_{\alpha_1}$ is meet-irreducible in ${L_1}_{/\alpha_1}$ as
$[z^1_i]_{\alpha_1}$ corresponds (under the isomorphism ) to a generator of
$F^d_{0,1}(G\smallsetminus \{y_i: i \in U\cup V\})$ since $z_i^1$ is never
extracted from $G$. This shows that $\phi_i$ does not induce an isomorphism,
since meet-reducibility is a property that is invariant under isomorphisms.
\end{proof}

\section{Acknowledgements}
The authors wish to thank (in alphabetical order) Professors Bekhlemishev,
Kabylzhanova, Nies, Pianigiani, and Visser for their helpful comments and
suggestions during the preparation of this paper. Special thanks are due to
Professor Shavrukov for having carefully read a first draft of the paper, and
for his many corrections and suggestions which have greatly contributed to
improve the presentation of the paper.


\end{document}